\numberwithin{equation}{section}
\numberwithin{figure}{section}
\theoremstyle{definition}
\newtheorem{defn}{\protect\definitionname}
\theoremstyle{remark}
\newtheorem*{notation*}{\protect\notationname}
\theoremstyle{plain}
\newtheorem{thm}{\protect\theoremname}
\theoremstyle{remark}
\newtheorem*{rem*}{\protect\remarkname}
\theoremstyle{plain}
\newtheorem{assumption}{\protect\assumptionname}
\theoremstyle{plain}
\newtheorem{lem}{\protect\lemmaname}
\theoremstyle{plain}
\newtheorem{prop}{\protect\propositionname}
\theoremstyle{remark}
\newtheorem*{acknowledgement*}{\protect\acknowledgementname}
\setlist{nosep}
\providecommand{\acknowledgementname}{Acknowledgement}
\providecommand{\assumptionname}{Assumption}
\providecommand{\definitionname}{Definition}
\providecommand{\lemmaname}{Lemma}
\providecommand{\notationname}{Notation}
\providecommand{\propositionname}{Proposition}
\providecommand{\remarkname}{Remark}
\providecommand{\theoremname}{Theorem}
\begin{document}
\newcommand{\R}{\mathbb{R}}
\title{Double Obstacle Problems and Fully Nonlinear PDE with Non-strictly
Convex Gradient Constraints}
\author{Mohammad Safdari$\,{}^{1}$}
\begin{abstract}
We prove the optimal $W^{2,\infty}$ regularity for fully nonlinear
elliptic equations with convex gradient constraints. We do not assume
any regularity about the constraints; so the constraints need not
be $C^{1}$ or strictly convex. We also show that the optimal regularity
holds up to the boundary. Our approach is to show that these elliptic
equations with gradient constraints are related to some fully nonlinear
double obstacle problems. Then we prove the optimal $W^{2,\infty}$
regularity for the double obstacle problems. In this process, we also
employ the monotonicity property for the second derivative of obstacles,
which we have obtained in a previous work.\medskip{}

\noindent \textsc{Mathematics Subject Classification.} 35R35, 35J87,
35B65, 49N60.\thanks{$^{1}\;$Department of Mathematical Sciences, Sharif University of
Technology, Tehran, Iran\protect \\
Email address: safdari@sharif.edu}
\end{abstract}

\maketitle

\section{Introduction}

The study of elliptic equations with gradient constraints was initiated
by \citet{MR529814} when he considered the problem 
\[
\max\{Lu-f,\;|Du|-g\}=0,
\]
where $L$ is a linear uniformly elliptic operator of the form 
\[
Lu=-a_{ij}D_{ij}^{2}u+b_{i}D_{i}u+cu.
\]
Equations of this type stem from dynamic programming in a wide class
of singular stochastic control problems. Evans proved $W_{\mathrm{loc}}^{2,p}$
regularity for $u$. He also obtained the optimal $W_{\mathrm{loc}}^{2,\infty}$
regularity under the additional assumption that $a_{ij}$ are constant.
\citet{MR607553} removed this additional assumption and obtained
$W_{\mathrm{loc}}^{2,\infty}$ regularity in general. Later, \citet{MR693645}
allowed the gradient constraint to be more general, and proved global
$W^{2,\infty}$ regularity. We also mention that \citet{soner1989regularity,soner1991free}
considered similar problems with special structure, and proved the
existence of classical solutions.

\citet{yamada1988hamilton} allowed the differential operator to be
more general, and considered the problem 
\[
\max_{1\le k\le N}\{L_{k}u-f_{k},\;|Du|-g\}=0,
\]
where each $L_{k}$ is a linear uniformly elliptic operator. Yamada
proved the existence of a solution in $W_{\mathrm{loc}}^{2,\infty}$.
Recently, there has been new interest in these types of problems.
\citet{hynd2013analysis} considered problems with more general gradient
constraints of the form 
\[
\max\{Lu-f,\;\tilde{H}(Du)\}=0,
\]
where $\tilde{H}$ is a convex function. He proved $W_{\mathrm{loc}}^{2,\infty}$
regularity when $\tilde{H}$ is strictly convex. Finally, \citet{Hynd}
studied fully nonlinear elliptic equations with strictly convex gradient
constraints of the form 
\[
\max\{F(x,D^{2}u)-f,\;\tilde{H}(Du)\}=0.
\]
Here $F(x,D^{2}u)$ is a fully nonlinear elliptic operator. They obtained
$W_{\mathrm{loc}}^{2,p}\cap W^{1,\infty}$ regularity in general,
and $W_{\mathrm{loc}}^{2,\infty}$ regularity when $F$ does not depend
on $x$. Let us also mention that \citet{Hynd-2012,Hynd2017} considered
eigenvalue problems for equations with gradient constraints too.

Closely related to the above problems are variational problems with
gradient constraints. An important example among them is the well-known
elastic-plastic torsion problem, which is the problem of minimizing
the functional 
\[
\int_{U}\frac{1}{2}|Dv|^{2}-v\,dx
\]
over the set 
\[
W_{B_{1}}:=\{v\in W_{0}^{1,2}(U):|Dv|\le1\textrm{ a.e.}\}.
\]
Here $U$ is a bounded open set in $\mathbb{R}^{n}$. This problem
is equivalent to finding $u\in W_{B_{1}}$ that satisfies the variational
inequality 
\[
\int_{U}Du\cdot D(v-u)-(v-u)\,dx\ge0\qquad\textrm{ for every }v\in W_{B_{1}}.
\]
An interesting property of variational problems with gradient constraints
is that under mild conditions they are equivalent to double obstacle
problems. For example, $u$, the minimizer of 
\begin{equation}
J[v]:=\int_{U}G(Dv)+g(v)\,dx\label{eq: fnctnl J}
\end{equation}
over $W_{B_{1}}$, also satisfies $-d\le u\le d$, and 
\[
\begin{cases}
-D_{i}(D_{i}G(Du))+g'(u)=0 & \textrm{ in }\{-d<u<d\},\\
-D_{i}(D_{i}G(Du))+g'(u)\le0 & \textrm{ a.e. on }\{u=d\},\\
-D_{i}(D_{i}G(Du))+g'(u)\ge0 & \textrm{ a.e. on }\{u=-d\},
\end{cases}
\]
where $d$ is the Euclidean distance to $\partial U$; see for example
\citep{MR1,SAFDARI202176}.

\citet{MR0239302} proved the $W^{2,p}$ regularity for the elastic-plastic
torsion problem. \citet{MR513957} obtained its optimal $W_{\mathrm{loc}}^{2,\infty}$
regularity. \citet{MR0385296} proved $W^{2,p}$ regularity for the
solution of a quasilinear variational inequality subject to the same
constraint as in the elastic-plastic torsion problem. \citet{MR697646}
proved $W^{2,p}$ regularity for the solution of a linear variational
inequality subject to a $C^{2}$ strictly convex gradient constraint.
\citet{MR1310935,MR1315349} proved $C^{1,\alpha}$ regularity for
the solution to a quasilinear variational inequality subject to a
$C^{2}$ strictly convex gradient constraint, and allowed the operator
to be degenerate of the $p$-Laplacian type. The paper \citep{MR1315349}
is also the only work here that is concerned with functionals with
non-quadratic $p$-growth.

Variational problems with gradient constraints have also seen new
developments in recent years. By using infinite dimensional duality,
\citet{giuffre2015lagrange} studied the Lagrange multipliers of quasilinear
variational inequalities subject to the same constraint as in the
elastic-plastic torsion problem. \citet{MR2605868} investigated the
minimizers of some functionals subject to gradient constraints, arising
in the study of random surfaces. In their work, the functional is
allowed to have certain kinds of singularities, and there is no particular
assumption about its growth condition. Also, the constraints are given
by convex polygons; so they are not strictly convex. They showed that
in two dimensions, the minimizer is $C^{1}$ away from the obstacles.
\citet{choe2016elliptic} generalized the regularity results of \citep{MR1310935,MR1315349}
by allowing more general constraints.

In {[}\citealp{Safdari20151}\nocite{safdari2017shape,MR1}\textendash \citealp{MR1},
\citealp{SAFDARI202176}{]} we have studied the regularity and the
free boundary of several classes of variational problems with gradient
constraints. Our goal was to understand the behavior of these problems
when the constraint is not strictly convex; and we have been able
to obtain the optimal $W^{2,\infty}$ regularity for them. This has
been partly motivated by the above-mentioned problem about random
surfaces. There are also similar interests in elliptic equations with
gradient constraints which are not strictly convex. These problems
emerge in the study of some singular stochastic control problems appearing
in financial models with transaction costs; see for example \citep{barles1998option,possamai2015homogenization}.

In this paper, we obtain a link between double obstacle problems and
elliptic equations with gradient constraints. This link has been well
known in the case where the double obstacle problem reduces to an
obstacle problem. However, we will show that there is still a connection
between the two problems in the general case. This connection allows
us to obtain the optimal $W^{2,\infty}$ regularity for fully nonlinear
elliptic equations which do not depend explicitly on $x$, and are
subject to non-strictly convex gradient constraints. It also paves
the way for studying more general elliptic equations with such constraints.
In this approach, we will also study fully nonlinear double obstacle
problems with singular obstacles, and we will obtain the optimal $W^{2,\infty}$
regularity for them. These types of singular obstacles have not been
studied before, to the best of author's knowledge.%
{} However, see \citep{MR2989443,lee2019regularity} for some recent
works on double obstacle problems.

Let us introduce the problem in more detail. First let us recall some
concepts from convex analysis. Let $K$ be a compact convex subset
of $\mathbb{R}^{n}$ whose interior contains the origin.
\begin{defn}
The \textbf{gauge} function of $K$ is the function 
\begin{equation}
\gamma_{K}(x):=\inf\{\lambda>0:x\in\lambda K\}.\label{eq: gaug}
\end{equation}
And the \textbf{polar} of $K$ is the set 
\begin{equation}
K^{\circ}:=\{x:\langle x,y\rangle\leq1\,\textrm{ for all }y\in K\},\label{eq: K0}
\end{equation}
where $\langle\,,\rangle$ is the standard inner product on $\mathbb{R}^{n}$.
\end{defn}
The gauge function $\gamma_{K}$ is convex, subadditive, and positively
1-homogeneous; so it looks like a norm on $\mathbb{R}^{n}$, except
that $\gamma_{K}(-x)$ is not necessarily the same as $\gamma_{K}(x)$.
The polar set $K^{\circ}$ is also a compact convex set containing
the origin as an interior point. Also note that since $K$ is closed
we have $K=\{\gamma_{K}\le1\}$, and since $K$ has nonempty interior
we have $\partial K=\{\gamma_{K}=1\}$. (For more details, and the
proofs of these facts, see \citep{MR3155183}.)

Let $U\subset\mathbb{R}^{n}$ be a bounded open set with Lipschitz
boundary. Let 
\begin{equation}
W_{K^{\circ},\varphi}=W_{K^{\circ},\varphi}(U):=\{v\in W^{1,2}(U):Dv\in K^{\circ}\textrm{ a.e., }v=\varphi\textrm{ on }\partial U\}.\label{eq: W_K}
\end{equation}
Here $\varphi:\mathbb{R}^{n}\to\mathbb{R}$ is a continuous function,
and the equality of $v,\varphi$ on $\partial U$ is in the sense
of trace. In order to ensure that $W_{K^{\circ},\varphi}$ is nonempty
we assume that 
\begin{equation}
-\gamma_{K}(y-x)\le\varphi(x)-\varphi(y)\le\gamma_{K}(x-y),\label{eq: phi Lip}
\end{equation}
for all $x,y\in\R^{n}$. Then by Lemma 2.1 of \citep{MR1797872} this
property implies that $\varphi$ is Lipschitz and $D\varphi\in K^{\circ}$
a.e.; so $\varphi\in W_{K^{\circ},\varphi}$.

Also let 
\begin{equation}
W_{\bar{\rho},\rho}=W_{\bar{\rho},\rho}(U):=\{v\in W^{1,2}(U):-\bar{\rho}\le v\leq\rho\textrm{ a.e., }v=\varphi\textrm{ on }\partial U\},\label{eq: W_rho}
\end{equation}
where the obstacles are 
\begin{align}
 & \rho(x)=\rho_{K,\varphi}(x;U):=\underset{y\in\partial U}{\min}[\gamma_{K}(x-y)+\varphi(y)],\nonumber \\
 & \bar{\rho}(x)=\bar{\rho}_{K,\varphi}(x;U):=\underset{y\in\partial U}{\min}[\gamma_{K}(y-x)-\varphi(y)].\label{eq: rho}
\end{align}
It is well known (see {[}\citealp{MR667669}, Section 5.3{]}) that
$\rho$ is the unique viscosity solution of the Hamilton-Jacobi equation
\begin{equation}
\begin{cases}
\gamma_{K^{\circ}}(Dv)=1 & \textrm{in }U,\\
v=\varphi & \textrm{on }\partial U.
\end{cases}\label{eq: H-J eq}
\end{equation}
Now, note that $-K$ is also a compact convex set whose interior contains
the origin. We also have $\bar{\rho}_{K,\varphi}=\rho_{-K,-\varphi}$,
since $\gamma_{-K}(\cdot)=\gamma_{K}(-\,\cdot)$. Thus we have a similar
characterization for $\bar{\rho}$ too.
\begin{notation*}
To simplify the notation, we will use the following conventions 
\[
\gamma:=\gamma_{K},\qquad\gamma^{\circ}:=\gamma_{K^{\circ}},\qquad\bar{\gamma}:=\gamma_{-K}.
\]
Thus in particular we have $\bar{\gamma}(x)=\gamma(-x)$.
\end{notation*}
In \citep{SAFDARI202176} we have shown that $-\bar{\rho}\le\rho$,
and
\begin{equation}
-\gamma(x-y)\le\rho(y)-\rho(x)\le\gamma(y-x),\label{eq: rho Lip}
\end{equation}
for all $x,y\in\R^{n}$. The above inequality also holds if we replace
$\rho,\gamma$ with $\bar{\rho},\bar{\gamma}$. Thus in particular,
$\rho,\bar{\rho}$ are Lipschitz continuous. We have also shown that
$-\bar{\rho},\rho\in W_{K^{\circ},\varphi}(U)$, and 
\[
W_{K^{\circ},\varphi}(U)\subset W_{\bar{\rho},\rho}(U).
\]
In addition, we showed that $u$, the minimizer of the functional
$J$ over $W_{K^{\circ},\varphi}$, is also the minimizer of $J$
over $W_{\bar{\rho},\rho}$. We also proved that under appropriate
assumptions $u$ belongs to $W^{2,\infty}$, without requiring any
smoothness or strict convexity about the gradient constraint $K^{\circ}$.

Motivated by the double obstacle problems arising from variational
problems, we are going to study the fully nonlinear double obstacle
problem 
\begin{equation}
\begin{cases}
F[u]=0 & \textrm{ a.e. in }\{-\bar{\rho}<u<\rho\},\\
F[u]\le0 & \textrm{ a.e. on }\{u=\rho\},\\
F[u]\ge0 & \textrm{ a.e. on }\{u=-\bar{\rho}\},
\end{cases}\label{eq: dbl obstcl}
\end{equation}
and employ it to better understand elliptic equations with gradient
constraints. Here we have used the convention 
\[
F[u]:=F(x,u,Du,D^{2}u).
\]

The assumptions about $F$ are stated in the next section. We stated
the assumptions in the general setting in which $F$ can also depend
on $x$. Because we will consider general double obstacle problems
with more regular obstacles in the appendix. And we also wanted to
determine which parts of our proof still work when $F$ depends on
$x$, to facilitate future works on such equations.%
{} However, our main results in this paper are about the case where
$F$ does not depend on $x$, of which a prototypical example is 
\[
\max\{F(D^{2}u),\;\tilde{H}(Du)\}=0,
\]
where the set $\{\tilde{H}(\cdot)\le0\}$ is a convex polyhedron.
Here the assumptions on $F$ can simply be stated as: $F$ is a $C^{2}$
convex function satisfying $F(0)=0$, and it is uniformly elliptic
(see Assumption \ref{assu: F}). This example resembles the equations
emerging from some singular stochastic control problems appearing
in financial models with transaction costs (see \citep{barles1998option,possamai2015homogenization}).

Now let us state our main results.
\begin{thm}
\label{thm: Reg PDE grad}Suppose $F$ does not depend on $x$, and
satisfies Assumptions \ref{assu: F},\ref{assu: F C2},\ref{assu: F Lip}.
Also suppose $\partial U$ is $C^{2,\alpha}$ for some $\alpha>0$,
and $\varphi$ is $C^{2,\alpha}$ with $\gamma^{\circ}(D\varphi)<1$.
In addition, suppose there is $v\in C^{0}(\overline{U})\cap W_{\mathrm{loc}}^{2,n}(U)\cap W_{\bar{\rho},\rho}(U)$
that satisfies $F[v]\le0$ a.e. Let $H=\gamma^{\circ}-1$. Then there
is $u\in W^{2,\infty}(U)$ that satisfies the elliptic equation with
gradient constraint 
\begin{equation}
\begin{cases}
\max\{F(u,Du,D^{2}u),\;H(Du)\}=0 & \textrm{ a.e. in }U,\\
u=\varphi & \textrm{ on }\partial U.
\end{cases}\label{eq: PDE grad constr}
\end{equation}
\end{thm}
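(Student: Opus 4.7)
The strategy is the central theme advertised in the introduction: reduce the gradient-constrained PDE \eqref{eq: PDE grad constr} to the fully nonlinear double obstacle problem \eqref{eq: dbl obstcl}, solve the latter via the appendix, and then check that its solution automatically inherits the gradient bound $H(Du)\le 0$.

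First I would apply the appendix's existence/regularity result for \eqref{eq: dbl obstcl} with obstacles $\rho$ and $-\bar{\rho}$. All its hypotheses are at hand: the subsolution $v$ is supplied; the boundary datum $\varphi$ belongs to $W_{\bar{\rho},\rho}$, because \eqref{eq: phi Lip} combined with the definitions \eqref{eq: rho} of $\rho,\bar{\rho}$ forces $-\bar{\rho}\le\varphi\le\rho$ on $\overline{U}$; and the $C^{2,\alpha}$ regularity of $\partial U$ and $\varphi$, together with $\gamma^{\circ}(D\varphi)<1$, supplies the geometric control needed to apply the appendix to the (possibly singular) obstacles $\rho,\bar{\rho}$. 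This produces $u\in W^{2,\infty}(U)\cap W_{\bar{\rho},\rho}(U)$ satisfying \eqref{eq: dbl obstcl} with $u=\varphi$ on $\partial U$.

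The heart of the proof, and the main obstacle, is to show that this $u$ already satisfies the gradient constraint $H(Du)=\gamma^{\circ}(Du)-1\le 0$ a.e. My plan is a translation/comparison argument that exploits crucially the hypothesis that $F$ does not depend on $x$. For a small vector $h\in\R^{n}$, define $w(x):=u(x+h)-\gamma(h)$ on the shifted domain $U-h$; because $F$ is translation invariant, $w$ satisfies the same double obstacle problem with obstacles $\rho(\,\cdot+h)-\gamma(h)$ and $-\bar{\rho}(\,\cdot+h)-\gamma(h)$, which are dominated by $\rho$ and $-\bar{\rho}$ thanks to \eqref{eq: rho Lip}. The strict inequality $\gamma^{\circ}(D\varphi)<1$, combined with \eqref{eq: phi Lip}, controls the comparison on the portions of the boundary of $U\cap(U-h)$ meeting $\partial U$. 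Applying the comparison principle for $F$ on the noncontact region $\{-\bar{\rho}<u<\rho\}$, where $F[u]=0$, and using the obstacle inequalities on the contact sets, one obtains $w\le u$ on the overlap, i.e., $u(x+h)-u(x)\le\gamma(h)$. Letting $h\to 0$ along arbitrary directions yields $\langle Du,e\rangle\le\gamma(e)$ for all $e$, which by polarity is exactly $Du\in K^{\circ}$ a.e.

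With the gradient bound in hand, deducing \eqref{eq: PDE grad constr} from \eqref{eq: dbl obstcl} is a routine case analysis. On the noncontact set $\{-\bar{\rho}<u<\rho\}$ we have $F[u]=0$ and $H(Du)\le 0$, so $\max\{F[u],H(Du)\}=0$. On $\{u=\rho\}$ a standard a.e.\ coincidence of gradients (both $u$ and $\rho$ are Lipschitz) gives $Du=D\rho$ a.e., and by \eqref{eq: H-J eq} we have $\gamma^{\circ}(D\rho)=1$ a.e., hence $H(Du)=0$; combined with $F[u]\le 0$ from \eqref{eq: dbl obstcl} the max vanishes. On $\{u=-\bar{\rho}\}$ the same reasoning, together with the identities $(-K)^{\circ}=-K^{\circ}$ and $\gamma_{-K^{\circ}}(\cdot)=\gamma^{\circ}(-\,\cdot)$, yields $\gamma^{\circ}(-D\bar{\rho})=1$ and so $H(Du)=0$; together with the global $F[u]\le 0$ produced by the appendix's construction (which, combined with $F[u]\ge 0$ from \eqref{eq: dbl obstcl}, actually forces $F[u]=0$ on this set) the max again vanishes. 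The boundary condition $u=\varphi$ is automatic from $u\in W_{\bar{\rho},\rho}(U)$.
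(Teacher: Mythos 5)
Your overall plan---solve the double obstacle problem, then check the gradient bound and do a case analysis---is indeed the paper's strategy, but two steps do not hold up as written.

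The critical gap is the first step. You invoke the appendix's existence/regularity result directly for the obstacles $\rho,-\bar{\rho}$, but Theorem \ref{thm: Reg u} requires Assumption \ref{assu: =00005Cpsi +-}, in particular the one-sided second-difference bounds
\[
\pm\mathfrak{D}_{h,\xi}^{2}\psi^{\pm}(x)\le\frac{C_{2}}{d(x)-h},
\]
and these are established for $\rho,-\bar{\rho}$ only when $\partial K$ is $C^{2}$ (see the remark following Assumption \ref{assu: =00005Cpsi +-}). Theorem \ref{thm: Reg PDE grad} explicitly assumes nothing about the regularity of $K$ or $K^{\circ}$: the obstacles may fail to be semiconcave/semiconvex in the required sense (e.g.\ when $K^{\circ}$ is a polyhedron), and the appendix simply does not apply to them directly. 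The ``geometric control'' you cite ($\partial U$ and $\varphi$ being $C^{2,\alpha}$ and $\gamma^{\circ}(D\varphi)<1$) does not compensate for the lack of curvature control on $\partial K$. This is precisely the content of Theorem \ref{thm: Reg dbl obstcl}, which replaces $K^{\circ}$ by a shrinking sequence of smooth, strictly convex approximations $K_{k}^{\circ}$, applies the appendix to the corresponding smooth obstacles $\rho_{k},-\bar{\rho}_{k}$, and then passes to the limit using uniform estimates (Propositions \ref{prop: ridge is elastic}, Lemma \ref{lem: D2 rho decreas}, the $W^{2,p}$ theory, and the results of Figalli--Shahgholian and Indrei--Minne). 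Skipping this approximation step is skipping the main technical work of the paper.

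The second issue is the case $\{u=-\bar{\rho}\}$ at the end. You assert that $F[u]\le 0$ there ``is produced by the appendix's construction''; it is not---problem \eqref{eq: dbl obstcl} gives only $F[u]\ge 0$ on that set, and the penalization scheme in the appendix delivers nothing stronger. What supplies $F[u]\le 0$ on $\{u=-\bar{\rho}\}$ is Lemma \ref{lem: u<v}: the hypothesized subsolution $v$ satisfies $v\le u$, so $u=-\bar{\rho}$ forces $v=-\bar{\rho}=u$ there and hence $F[u]=F[v]\le 0$. In other words, the existence of $v$ is not decorative; without invoking it at this point your case analysis leaves $\max\{F[u],H(Du)\}$ possibly positive on the lower contact set.

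On the other hand, your translation argument for the gradient bound is a genuine and more elementary alternative to the paper's proof of Theorem \ref{thm: dbl obstcl =00003D PDE grad}. The paper shows $u\in C^{3,\alpha}$ in the non-coincidence set, differentiates $F[u]=0$ to get an elliptic equation for $D_{\xi}u$, establishes $D_{\xi}u\le 1$ on the boundary of the non-coincidence set, and closes with the maximum principle. Your argument instead compares $u$ with $u(\cdot+h)-\gamma(h)$ using \eqref{eq: rho Lip}, the $F_{z}\ge 0$ and translation-invariance structure, and the ABP comparison of Lemma \ref{lem: u<v}; it avoids the $C^{3,\alpha}$ interior regularity step entirely. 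If written out in full---verifying the boundary inequality on $\partial(U\cap(U-h))$, checking that $\{w>u\}$ forces $u(\cdot+h)>-\bar{\rho}(\cdot+h)$ and $u<\rho$ so the right one-sided equations hold, and applying ABP on $\{w>u\}$---this would give a clean alternative to Theorem \ref{thm: dbl obstcl =00003D PDE grad}. But it does not rescue the missing approximation argument, which remains the essential gap.
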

\begin{rem*}
Note that if the above equation with gradient constraint has a solution
then we must have a subsolution ($F\le0$) inside $W_{K^{\circ},\varphi}\subset W_{\bar{\rho},\rho}$.
Thus the existence of $v$ is a natural requirement. In particular,
note that this requirement is weaker than a corresponding condition
in \citep{Hynd}, which requires the existence of a ``strict'' subsolution
of (\ref{eq: PDE grad constr}) in $C^{2}$.%
\end{rem*}
\begin{rem*}
Note that we are not assuming any regularity about $\partial K$ or
$\partial K^{\circ}$. In particular, $\gamma^{\circ}$, which defines
the gradient constraint, need not be $C^{1}$ or strictly convex.
Furthermore, note that any convex gradient constraint of the general
form $\tilde{H}(Du)\le0$ for which the set $\{\tilde{H}(\cdot)\le0\}$
is bounded, and contains a neighborhood of the origin (which is a
natural requirement in these problems), can be written in the form
$\gamma^{\circ}-1$ with respect to the convex set $K=\{\tilde{H}(\cdot)\le0\}^{\circ}$.
(Note that $\{\tilde{H}(\cdot)\le0\}=K^{\circ}$, because the double
polar of such convex sets are themselves, as shown in Section 1.6
of \citep{MR3155183}.)
\end{rem*}
In contrast to the regularity result of \citep{Hynd}, the main difference
of our result is that we do not require the gradient constraint to
be strictly convex. However, we do not allow $F$ to depend on $x$
(although we allow dependence on $u,Du$). This is mainly because
we need the full power of the maximum principle for $Du$ on several
occasions, at which mere estimates of $|Du|$ are not sufficient.
Another difference is that here we obtain optimal regularity up to
the boundary in addition to local regularity. We should mention that
our technique, even in case of local regularity, is inherently global.
Because we use the behavior of the obstacles at $\partial U$ in a
crucial way. In particular we employ Lemma \ref{lem: D2 rho decreas},
which is a monotonicity property for $D^{2}\rho,D^{2}\bar{\rho}$.

We split the proof of Theorem \ref{thm: Reg PDE grad} into two parts.
In Theorem \ref{thm: Reg dbl obstcl} we show that there is $u\in W^{2,\infty}(U)$
that satisfies the double obstacle problem (\ref{eq: dbl obstcl}).
And in Theorem \ref{thm: dbl obstcl =00003D PDE grad} we prove that
$u$ must also satisfy the elliptic equation with gradient constraint
(\ref{eq: PDE grad constr}).
\begin{thm}
\label{thm: dbl obstcl =00003D PDE grad}Suppose $F$ does not depend
on $x$, and satisfies Assumptions \ref{assu: F},\ref{assu: F C2}.
Also suppose $\partial U$ is $C^{1}$, and there is $v\in C^{0}(\overline{U})\cap W_{\mathrm{loc}}^{2,n}(U)\cap W_{\bar{\rho},\rho}(U)$
that satisfies $F[v]\le0$ a.e. Let $u\in C^{1}(\overline{U})\cap W_{\mathrm{loc}}^{2,n}(U)\cap W_{\bar{\rho},\rho}(U)$
be a solution of the double obstacle problem (\ref{eq: dbl obstcl}).
Then $u$ also satisfies the elliptic equation with gradient constraint
(\ref{eq: PDE grad constr}).
\end{thm}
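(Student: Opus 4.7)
The plan is to verify the three a.e.\ conditions defining a solution of \eqref{eq: PDE grad constr}: the boundary data $u=\varphi$ (which is immediate from $u\in W_{\bar\rho,\rho}$), the inequality $F[u]\le0$ a.e.\ in $U$, the gradient constraint $H(Du)\le0$ a.e., and the saturation $\max\{F[u],H(Du)\}=0$ a.e. The two main tools will be a comparison principle for the uniformly elliptic operator $F$ (Assumption~\ref{assu: F}), which is available since $F$ is independent of $x$, convex (Assumption~\ref{assu: F C2}), and monotone in $u$, together with the Lipschitz estimate~\eqref{eq: rho Lip} controlling $\rho$ and $\bar\rho$.

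As a preliminary step I would use comparison to show $u\ge v$ in $U$. On the relatively open set $\{v>u\}$ one has $u<v\le\rho$, so \eqref{eq: dbl obstcl} gives $F[u]\ge0$ a.e.\ (on both $\{u=-\bar\rho\}$ and $\{-\bar\rho<u<\rho\}$), while $F[v]\le0$ by hypothesis; since $u=v=\varphi$ on $\partial U$, the standard ABP/comparison principle for uniformly elliptic fully nonlinear equations forces $\{v>u\}=\emptyset$. A consequence is that on the lower contact set $\{u=-\bar\rho\}$ one has $-\bar\rho\le v\le u=-\bar\rho$, hence $u\equiv v$ there; in particular, since $v\in W^{2,n}_{\mathrm{loc}}(U)$, we conclude $F[u]=F[v]\le0$ a.e.\ on $\{u=-\bar\rho\}$. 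Combined with $F[u]=0$ on $\{-\bar\rho<u<\rho\}$ and $F[u]\le0$ on $\{u=\rho\}$ from \eqref{eq: dbl obstcl}, this gives $F[u]\le0$ a.e.\ in $U$.

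The heart of the argument is establishing $Du\in K^\circ$ a.e. On the upper contact set $\{u=\rho\}$, a.e.\ point is a common differentiability point of $u$ and $\rho$; there $Du=D\rho$, so $\gamma^\circ(Du)=1$ by \eqref{eq: H-J eq}. The identity $\bar\rho=\rho_{-K,-\varphi}$ gives the analogous statement on $\{u=-\bar\rho\}$. In the open non-contact set $N:=\{-\bar\rho<u<\rho\}$ I would deduce the gradient bound by proving the pointwise Lipschitz inequality
\[
u(y)-u(x)\le\gamma(y-x)\qquad\text{for every }x,y\in\overline U,
\]
which, by the characterisation $p\in K^\circ\iff\langle p,\xi\rangle\le\gamma(\xi)$ for all $\xi$, is equivalent to $Du\in K^\circ$ a.e. To prove this, fix $h\in\R^n$, set $w(x):=u(x+h)-\gamma(h)$ on $U\cap(U-h)$, and show $w\le u$ by a comparison argument. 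On $\{u=\rho\}$ one has $w(x)\le\rho(x+h)-\gamma(h)\le\rho(x)=u(x)$ by \eqref{eq: rho Lip}, and on the overlap boundary $w\le u$ follows from the hypothesis $\gamma^\circ(D\varphi)<1$. Translation invariance of $F$ together with its monotonicity in $u$ then propagates $w\le u$ into the interior via the maximum principle.

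The main obstacle I anticipate lies in closing this last comparison on the lower contact set $\{u=-\bar\rho\}$: the obvious upper estimate $w(x)\le\rho(x+h)-\gamma(h)\le\rho(x)$ coming from the upper obstacle is typically too large compared with the required $w(x)\le u(x)=-\bar\rho(x)$, and the lower obstacle supplies information in the wrong direction. To overcome this I would combine Step~1 (so that $u\equiv v$ on $\{u=-\bar\rho\}$) with the monotonicity property of $D^2\bar\rho$ provided by Lemma~\ref{lem: D2 rho decreas}, using it to transfer the comparison across the lower contact into the PDE region where the maximum principle finishes the job. Once $H(Du)\le0$ a.e.\ is in hand, combining with $F[u]\le0$ a.e.\ yields $\max\{F[u],H(Du)\}\le0$ a.e.; equality follows point by point since $F[u]=0$ in $N$ and $H(Du)=0$ on both contact sets, which completes \eqref{eq: PDE grad constr}.
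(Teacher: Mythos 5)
Your overall decomposition (reduce to $F[u]\le 0$ a.e.\ and $\gamma^\circ(Du)\le 1$ a.e., then combine) matches the paper's, and your Step~1 (comparison with $v$) is essentially Lemma~\ref{lem: u<v}. But your core step for the gradient bound is genuinely different from the paper's. The paper bootstraps $u$ to $C^{3,\alpha}$ on $V=\{u<\rho\}$ via local replacement, differentiates $F[u]=0$ along a direction $\xi$ with $\gamma(\xi)=1$, and applies the maximum principle to the resulting linear equation for $D_\xi u$, with boundary values $D_\xi u\le 1$ on $\partial V$ checked by casework using $D\rho$ on $\partial V\cap U$ and the squeeze $-\bar\rho\le u\le\rho$ on $\partial V\cap\partial U$. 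You instead propose the translation comparison $w(x):=u(x+h)-\gamma(h)\le u(x)$, exploiting translation invariance and $F_z\ge 0$. This is a valid and arguably more elementary route, since it avoids the regularity bootstrap entirely; but your execution has a genuine gap at exactly the step you flag.

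The obstacle you anticipate on the lower contact set is not actually there, and your proposed workaround does not address anything. You never need a pointwise inequality $w\le -\bar\rho$ on $\{u=-\bar\rho\}$. Consider $V':=\{x\in U\cap(U-h):w(x)>u(x)\}$, which is open. For $x\in V'$ one has $u(x)<w(x)\le\rho(x+h)-\gamma(h)\le\rho(x)$ by (\ref{eq: rho Lip}), so $u<\rho$ throughout $V'$ and hence $F[u]\ge 0$ a.e.\ on $V'$ directly from the two relevant cases of (\ref{eq: dbl obstcl}); meanwhile $F[w](x)\le F[u](x+h)\le 0$ a.e.\ by translation invariance, $F_z\ge 0$, and the global bound $F[u]\le 0$ from Step~1. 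The ABP comparison then gives $w\le u$ on $V'$, contradiction. No use of $u\equiv v$ on $\{u=-\bar\rho\}$ or of Lemma~\ref{lem: D2 rho decreas} is needed, and neither would plug the hole in the way you describe. Also note that you invoke $\gamma^\circ(D\varphi)<1$ on the boundary of the overlap, but that condition is \emph{not} among the hypotheses of Theorem~\ref{thm: dbl obstcl =00003D PDE grad} (it is only assumed in Theorem~\ref{thm: Reg dbl obstcl}); in fact $w\le u$ on $\partial(U\cap(U-h))$ follows from (\ref{eq: phi Lip}) and the very definitions of $\rho$, $\bar\rho$ in (\ref{eq: rho}): if $x\in\partial U$ and $x+h\in\overline U$ use $\rho(x+h)\le\gamma(h)+\varphi(x)$, and if $x+h\in\partial U$ use $-\bar\rho(x)\ge\varphi(x+h)-\gamma(h)$. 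Once these two points are repaired, your translation approach goes through and gives an alternative proof; as written, it stops one step short at precisely the point where you were unsure.
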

\begin{thm}
\label{thm: Reg dbl obstcl}Suppose $F$ does not depend on $x$,
and satisfies Assumptions \ref{assu: F},\ref{assu: F C2},\ref{assu: F Lip}.
Also suppose $\partial U$ is $C^{2,\alpha}$ for some $\alpha>0$.
In addition, suppose that $\varphi$ is $C^{2,\alpha}$, and $\gamma^{\circ}(D\varphi)<1$.
Then there is $u\in W^{2,\infty}(U)\cap W_{\bar{\rho},\rho}(U)$ that
satisfies the double obstacle problem (\ref{eq: dbl obstcl}).
\end{thm}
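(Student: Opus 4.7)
My plan is to combine a penalization of the double obstacle problem with a regularization of the possibly singular obstacles $\rho,\bar\rho$, and then extract a solution from suitable uniform estimates. Concretely, I would first construct $C^{2,\alpha}$ families $\rho^{\epsilon}\searrow\rho$ and $\bar\rho^{\epsilon}\searrow\bar\rho$ that preserve the one-sided Lipschitz bound (\ref{eq: rho Lip}) and retain the second-derivative monotonicity of Lemma \ref{lem: D2 rho decreas}. Then, with a convex penalty $\beta_{\epsilon}$ vanishing on $(-\infty,0]$ and blowing up on $(0,\infty)$, I would solve the smoothed fully nonlinear equation
\begin{equation*}
F(u^{\epsilon},Du^{\epsilon},D^{2}u^{\epsilon})+\beta_{\epsilon}(u^{\epsilon}-\rho^{\epsilon})-\beta_{\epsilon}(-\bar\rho^{\epsilon}-u^{\epsilon})=0\quad\text{in }U,\qquad u^{\epsilon}=\varphi\text{ on }\partial U,
\end{equation*}
obtaining a classical solution $u^{\epsilon}\in C^{2,\alpha}(\overline{U})$ by a Leray--Schauder continuation that uses Assumptions \ref{assu: F},\ref{assu: F C2},\ref{assu: F Lip}, the $C^{2,\alpha}$ regularity of $\partial U$ and $\varphi$, and barriers built from $\rho^{\epsilon},\bar\rho^{\epsilon},\varphi$, with $v$ furnishing a natural $L^{\infty}$ bound from above.

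The core of the proof is a $W^{2,\infty}$ estimate for $u^{\epsilon}$ independent of $\epsilon$. In the interior I would follow the Evans--Ishii--Lions strategy: differentiate the equation twice in an arbitrary direction $\xi$, and exploit convexity of $F$ (Assumption \ref{assu: F C2}) together with the sign $\beta_{\epsilon}'\ge0$ to see that $D_{\xi\xi}u^{\epsilon}$ is a subsolution of a linear uniformly elliptic equation. The maximum principle then controls its interior supremum by its boundary values, and a parallel computation at the first-derivative level delivers a uniform Lipschitz bound. This reduces everything to a boundary $W^{2,\infty}$ estimate.

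The hardest step, and the main novelty, is the boundary second-derivative bound in the presence of merely Lipschitz obstacles. Here the hypothesis $\gamma^{\circ}(D\varphi)<1$ is essential: combined with (\ref{eq: rho Lip}) and the monotonicity of $D^{2}\rho, D^{2}\bar\rho$ (Lemma \ref{lem: D2 rho decreas}), it forces the coincidence sets $\{u^{\epsilon}=\rho^{\epsilon}\}$ and $\{u^{\epsilon}=-\bar\rho^{\epsilon}\}$ to stay strictly inside $U$, so in a neighborhood of $\partial U$ the penalty terms vanish and $u^{\epsilon}$ solves the unconstrained equation $F[u^{\epsilon}]=0$ with $C^{2,\alpha}$ Dirichlet data on a $C^{2,\alpha}$ domain. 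Classical Krylov boundary regularity then yields an $\epsilon$-independent $C^{2,\alpha}$ bound near $\partial U$, closing the previous step. At interior coincidence points, the same monotonicity of $D^{2}\rho,D^{2}\bar\rho$ is precisely what is needed to prevent blow-up of $D_{\xi\xi}u^{\epsilon}$ where $u^{\epsilon}$ touches the singular obstacle.

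With the uniform estimate in hand, I would extract a weak-$*$ $W^{2,\infty}$ limit $u^{\epsilon_{k}}\rightharpoonup u$, which by Arzel\`a--Ascoli also converges uniformly together with its gradient. The unilateral bounds $-\bar\rho\le u\le\rho$ follow because boundedness of $\beta_{\epsilon}(u^{\epsilon}-\rho^{\epsilon})$ in $L^{\infty}$, together with $\beta_{\epsilon}\to+\infty$ on $(0,\infty)$, forces $u\le\rho$ in the limit, and symmetrically for the lower obstacle; the sign structure of $\beta_{\epsilon}$ then identifies $F[u]=0$, $F[u]\le0$, $F[u]\ge0$ on the three regions of (\ref{eq: dbl obstcl}). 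This produces the required $u\in W^{2,\infty}(U)\cap W_{\bar\rho,\rho}(U)$.
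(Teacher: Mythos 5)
Your plan hinges on a step that is not achievable by the means you propose, and this is the central gap: you want smooth approximations $\rho^{\epsilon}\searrow\rho$, $\bar\rho^{\epsilon}\searrow\bar\rho$ that ``retain the second-derivative monotonicity of Lemma~\ref{lem: D2 rho decreas}.'' But ordinary mollification does not preserve that monotonicity, because the monotonicity is a structural consequence of $\rho$ solving the convex Hamilton--Jacobi equation $\gamma^{\circ}(D\rho)=1$ (the Riccati inequality (\ref{eq: ODE D2 rho}) along characteristics), and convolution destroys that equation. The device that makes the argument work is to approximate the convex body $K^{\circ}$ itself by smooth, strictly convex bodies $K_{k}^{\circ}$ with $K^{\circ}=\bigcap K_{k}^{\circ}$, and to take $\rho_{k}=\rho_{K_{k},\varphi}$: then each $\rho_{k}$ still solves $\gamma_{k}^{\circ}(D\rho_{k})=1$, is genuinely $C^{k,\alpha}$ off its ridge, and Lemma~\ref{lem: D2 rho decreas} applies to it. Without this, the rest of your outline has no foundation.

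Two further steps in your sketch do not go through. First, the claim that $\gamma^{\circ}(D\varphi)<1$ ``forces the coincidence sets $\{u^{\epsilon}=\rho^{\epsilon}\}$ and $\{u^{\epsilon}=-\bar\rho^{\epsilon}\}$ to stay strictly inside $U$'' is false in general: the obstacles, the boundary datum, and the solution all agree with $\varphi$ on $\partial U$, so coincidence can reach $\partial U$. What $\gamma^{\circ}(D\varphi)<1$ actually buys is a $k$-independent bound on $D^{2}\rho_{k}$ at $\partial U$ (via (\ref{eq: D2 rho (y)})), not avoidance of the boundary; the boundary $W^{2,\infty}$ estimate then comes from the obstacle-problem results of Indrei--Minne after flattening $\partial U$, not from Krylov's theory for the free equation $F=0$. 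Second, twice differentiating the penalized equation in the Evans--Ishii--Lions style produces, from the lower penalty $-\beta_{\epsilon}(-\bar\rho^{\epsilon}-u^{\epsilon})$, a term $-\beta_{\epsilon}''(\cdot)\,(D_{\xi}(-\bar\rho^{\epsilon}-u^{\epsilon}))^{2}$ of the wrong sign, and a source $\beta_{\epsilon}'D^{2}_{\xi\xi}\rho^{\epsilon}$ that cannot be controlled where $D^{2}\rho^{\epsilon}$ blows up negatively near the ridge. The paper sidesteps the second-differentiation route entirely: it first shows the contact set avoids the ridge of $\rho_{k}$ (Proposition~\ref{prop: ridge is elastic}), then uses Lemma~\ref{lem: D2 rho decreas} to get a one-sided bound $D^{2}u_{k}\le\tilde{C}I$ on $P_{k}^{+}$, deduces uniform boundedness of $F[u_{k}]$, obtains $W^{2,p}$ bounds, and upgrades to $W^{2,\infty}$ via the Caffarelli--Figalli/Indrei--Minne estimates, interior and boundary.
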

\begin{rem*}
Note that we are not assuming any regularity about $\partial K$ or
$\partial K^{\circ}$; so the obstacles can be highly irregular.
\end{rem*}
\begin{rem*}
In Theorem \ref{thm: Reg dbl obstcl}, and hence in Theorem \ref{thm: Reg PDE grad},
we can assume $\varphi$ is $C^{1,1}$. In this case, in the proof
of Theorem \ref{thm: Reg dbl obstcl} we have to work with smooth
approximations of $\varphi$ instead of $\varphi$. The required modifications
can be done easily; however, to keep the presentation of the proof
more clear we preferred to not follow this route.%
\end{rem*}
\begin{rem*}
Theorem \ref{thm: Reg PDE grad} and Theorem \ref{thm: Reg dbl obstcl}
hold under the more relaxed condition $\gamma^{\circ}(D\varphi)\le1$;
however, then we also need to impose some technical restrictions at
those points of $\partial U$ at which $\gamma^{\circ}(D\varphi)=1$.
The details of this more general case are presented before the proof
of Theorem \ref{thm: Reg dbl obstcl} on page \pageref{enu: cond *}.
\end{rem*}
The paper is organized as follows. In Section \ref{sec: Prelim} we
first state our assumptions, and prove Theorem \ref{thm: dbl obstcl =00003D PDE grad}.
Here we use Lemma \ref{lem: u<v} in which we prove that a solution
to the double obstacle problem (\ref{eq: dbl obstcl}) must be larger
than the solution of the elliptic equation with gradient constraint
(\ref{eq: PDE grad constr}). Then we review some well-known facts
about the regularity of $K$, and its relation to the regularity of
$K^{\circ},\gamma,\gamma^{\circ}$. After that we consider the function
$\rho$ more carefully. We will review the formulas for the derivatives
of $\rho$ that we have obtained in \citep{SAFDARI202176}, especially
the novel explicit formula (\ref{eq: D2 rho (x)}) for $D^{2}\rho$.
To the best of author's knowledge, formulas of this kind have not
appeared in the literature before, except for the simple case where
$\rho$ is the Euclidean distance to the boundary. (Although, some
special two dimensional cases also appeared in our earlier works \citep{Safdari20151,Paper-4}.)

One of the main applications of the formula (\ref{eq: D2 rho (x)})
for $D^{2}\rho$ is in the relation (\ref{eq: ridge =00003D Q=00003D0})
for characterizing the set of singularities of $\rho$. Another important
application is in Lemma \ref{lem: D2 rho decreas}, which implies
that $D^{2}\rho$ attains its maximum on $\partial U$. This interesting
property is actually a consequence of a more general property of the
solutions to Hamilton-Jacobi equations (remember that $\rho$ is the
viscosity solution of the Hamilton-Jacobi equation (\ref{eq: H-J eq})).
This little-known monotonicity property is investigated in \citep{SAFDARI202176};
but we included a brief account at the end of Section \ref{sec: Prelim}
for reader's convenience.

In Section \ref{sec: Proof Thm 3} we prove the regularity result
for double obstacle problem (\ref{eq: dbl obstcl}), aka Theorem \ref{thm: Reg dbl obstcl}.
The idea of the proof of this theorem is to approximate $K^{\circ}$
with smoother convex sets. Then, as it is common in the study of the
regularity of PDEs, we have to find uniform bounds for the various
norms of the approximations $u_{k}$ to $u$. Here, among other estimations,
we will use the fact that the second derivative of the approximations
$\rho_{k}$ to $\rho$ attain their maximums on $\partial U$. We
will also use our detailed knowledge of the set of singularities of
$\rho_{k}$ to show that $u_{k}$ does not touch $\rho_{k}$ at its
singularities (see Proposition \ref{prop: ridge is elastic}). Let
us finally mention that in order to get the optimal $W^{2,\infty}$
regularity we used the result of \citet{MR3198649}, and its generalizations
by \citet{Indrei-Minne,indrei2016nontransversal}. A more detailed
sketch of proof for Theorem \ref{thm: Reg dbl obstcl} is given at
the beginning of its proof (before its Part I) on page \pageref{proof Thm 3}.

At the end, in Appendix \ref{sec: Appdx}, we obtain a standard regularity
result for double obstacle problems, which we have used in the article.
Here the obstacles are more regular. We also allowed $F$ to explicitly
depend on $x$. The penalization method employed in the appendix is
classical, but to the best of author\textquoteright s knowledge the
results have not appeared elsewhere. Nevertheless, we include the
proofs here for completeness. \vspace{1.5cm}

\section{\label{sec: Prelim}Assumptions and Preliminaries}

First let us introduce some more notation. We will use $d(x):=\min_{y\in\partial U}|x-y|$
to denote the Euclidean distance to $\partial U$. In addition, for
two points $x,y\in\R^{n}$, $[x,y],\,]x,y[,\,[x,y[,\,]x,y]$ will
denote the closed, open, and half-open line segments with endpoints
$x,y$, respectively. Also, we will use the convention of summing
over repeated indices.

Remember that a \textit{strong solution} of a second order equation
is a $W^{2,p}$ function that satisfies the equation a.e. We will
also use the notion of viscosity solution, so we are going to review
its definition.
\begin{defn}
A continuous function $u$ is a \textit{viscosity solution} of $F[u]=0$
if whenever $\phi$ is a $C^{2}$ function and $u-\phi$ has a local
maximum at an interior point $x_{0}$ we have 
\[
F(x_{0},u(x_{0}),D\phi(x_{0}),D^{2}\phi(x_{0}))\le0,
\]
and whenever $\psi$ is a $C^{2}$ function and $u-\psi$ has a local
minimum at an interior point $x_{0}$ we have 
\[
F(x_{0},u(x_{0}),D\psi(x_{0}),D^{2}\psi(x_{0}))\ge0.
\]
\end{defn}
Now let us state our main assumptions about $F$. These are standard
assumptions on the convexity and monotonicity of $F$, and the growth
rates of $F$ and its derivatives, needed for the study of regularity
of fully nonlinear equations. In the following, $\mathcal{S}^{n\times n}$
denotes the space of symmetric $n\times n$ real matrices.
\begin{assumption}
\label{assu: F}The function $F(x,z,p,M):\overline{U}\times\mathbb{R}\times\mathbb{R}^{n}\times\mathcal{S}^{n\times n}\to\mathbb{R}$
is a $C^{1}$ function that satisfies\medskip{}
\begin{enumerate}
\item[\upshape{(a)}] $F$ is uniformly elliptic, i.e. there are constants $\Lambda\ge\lambda>0$
such that 
\[
-\Lambda\,\mathrm{tr}(N)\le F(x,z,p,M+N)-F(x,z,p,M)\le-\lambda\,\mathrm{tr}(N),
\]
for all $x\in\overline{U}$, $z\in\mathbb{R}$, $p\in\mathbb{R}^{n}$,
and $M,N\in\mathcal{S}^{n\times n}$ with $N\ge0$.
\item[\upshape{(b)}] For every $C>0$ there is $c_{1}=c_{1}(C)>0$ such that 
\begin{equation}
\begin{cases}
|F(x,z,p,0)|\le c_{1}(1+|p|^{2}),\\
|F_{x}|,\;|F_{z}|\le c_{1}(1+|p|^{2}+|M|),\\
|F_{p}|\le c_{1}(1+|p|+|M|),
\end{cases}\label{eq: Bnds DF}
\end{equation}
for all $x\in\overline{U}$, $|z|\le C$, $p\in\mathbb{R}^{n}$, and
$M\in\mathcal{S}^{n\times n}$.
\item[\upshape{(c)}] $F$ is an increasing function of $z$ for each fixed $(x,p,M)$,
i.e. $F_{z}\ge0$.
\item[\upshape{(d)}] $F$ is a convex function of $M$.
\item[\upshape{(e)}] We have%
\[
F(x,z,p,0)\,\mathrm{sign}(z)\ge-c_{3}(1+|p|)
\]
for all $x\in\overline{U}$, $z\in\mathbb{R}$, $p\in\mathbb{R}^{n}$,
and some constant $c_{3}>0$.
\end{enumerate}
\end{assumption}
\begin{assumption}
\label{assu: F C2}Suppose that $F$ is $C^{2}$, and for every $C>0$
there is $c_{2}=c_{2}(C)>0$ such that 
\begin{equation}
\begin{cases}
|F_{Mx}|,|F_{Mz}|,|F_{Mp}|\le c_{2},\\
|F_{pp}|,|F_{pz}|,|F_{px}|,|F_{zz}|,|F_{zx}|,|F_{xx}|\le c_{2}(1+|M|),
\end{cases}\label{eq: Bnds D2F}
\end{equation}
for all $x\in\overline{U}$, $|z|,|p|\le C$, and $M\in\mathcal{S}^{n\times n}$.
\end{assumption}
\begin{lem}
\label{lem: u<v}Suppose $F$ satisfies Assumption \ref{assu: F},
and $u\in C^{0}(\overline{U})\cap W_{\mathrm{loc}}^{2,n}(U)\cap W_{\bar{\rho},\rho}(U)$
is a solution of the double obstacle problem (\ref{eq: dbl obstcl}).
Also suppose that $v\in C^{0}(\overline{U})\cap W_{\mathrm{loc}}^{2,n}(U)\cap W_{\bar{\rho},\rho}(U)$
satisfies $F[v]\le0$ a.e. Then we have 
\[
v\le u.
\]
As a result we get 
\[
\begin{cases}
F[u]=0 & \textrm{ a.e. in }\{-\bar{\rho}\le u<\rho\},\\
F[u]\le0 & \textrm{ a.e. on }\{u=\rho\}.
\end{cases}
\]
\end{lem}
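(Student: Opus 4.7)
The plan is a standard comparison argument by contradiction. Suppose $\Omega:=\{v>u\}$ is nonempty. Since $u,v\in C^0(\overline{U})$ agree with $\varphi$ on $\partial U$, the set $\Omega$ is open and bounded, and $w:=v-u$ satisfies $w=0$ on $\partial\Omega$. On $\Omega$ we have $u<v\le\rho$, so $\{u=\rho\}$ is disjoint from $\Omega$, and hence the double obstacle problem \eqref{eq: dbl obstcl} forces $F[u]\ge 0$ a.e.\ in $\Omega$ (equal to zero where $-\bar\rho<u<\rho$, and $\ge 0$ where $u=-\bar\rho$), while $F[v]\le 0$ a.e.\ in $\Omega$ by hypothesis.

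Next I would linearize the inequality $F[v]-F[u]\le 0$. Since $F$ is $C^1$ (Assumption~\ref{assu: F}) and $u,v\in W^{2,n}_{\mathrm{loc}}(U)$, integrating $\frac{d}{dt}F(x,u+tw,Du+tDw,D^2u+tD^2w)$ from $0$ to $1$ produces an inequality of the form
\[
-A_{ij}(x)\,D_{ij}w+b_i(x)\,D_iw+c(x)\,w \;\le\; 0 \qquad \text{a.e.\ in }\Omega,
\]
where $A=-\int_0^1 F_M\,dt$ is uniformly elliptic with ellipticity constants $\lambda,\Lambda$ by Assumption~\ref{assu: F}(a), the drift $b=\int_0^1 F_p\,dt$ lies in the required integrability class by Assumption~\ref{assu: F}(b) combined with the $W^{2,n}_{\mathrm{loc}}$-regularity of $u,v$, and $c=\int_0^1 F_z\,dt\ge 0$ by Assumption~\ref{assu: F}(c). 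Because the coefficient of $w$ carries the correct sign, the Aleksandrov--Bakelman--Pucci maximum principle for strong solutions, applied on the bounded open set $\Omega$, yields $\sup_\Omega w \le \sup_{\partial\Omega}w^+=0$, contradicting $w>0$ in $\Omega$. Therefore $v\le u$ throughout $U$.

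For the ``as a result'' statement, on the set $E:=\{u=-\bar\rho\}$ the chain $-\bar\rho\le v\le u=-\bar\rho$ forces $v=u$. Applying Stampacchia's lemma first to $u-v\in W^{1,n}_{\mathrm{loc}}(U)$ and then to its weak gradient $Du-Dv\in W^{1,n}_{\mathrm{loc}}(U)$ gives $D(u-v)=0$ and $D^2(u-v)=0$ almost everywhere on $E$, so that $F[u]=F[v]\le 0$ a.e.\ on $E$. Combined with the lower bound $F[u]\ge 0$ from \eqref{eq: dbl obstcl}, this forces $F[u]=0$ a.e.\ on $E$, which joined with the equation on $\{-\bar\rho<u<\rho\}$ produces the claimed identity $F[u]=0$ a.e.\ on $\{-\bar\rho\le u<\rho\}$. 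The inequality on $\{u=\rho\}$ is inherited directly from \eqref{eq: dbl obstcl}. The only real technical point is confirming that the drift and zeroth-order coefficients in the linearized equation have the integrability required for Aleksandrov--Bakelman--Pucci on $\Omega$; this follows routinely from the growth bounds in Assumption~\ref{assu: F}(b) and $u,v\in W^{2,n}_{\mathrm{loc}}(U)$.
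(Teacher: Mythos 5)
Your proof is correct and follows essentially the same strategy as the paper: linearize $F$ along the segment from $u$ to $v$, use the sign information coming from the double obstacle conditions together with $F[v]\le 0$, and invoke the Aleksandrov--Bakelman--Pucci maximum principle to conclude $v\le u$. The only differences are cosmetic---the paper applies ABP directly on $V=\{u<\rho\}$ rather than by contradiction on $\Omega=\{v>u\}\subset V$, and you make explicit the Stampacchia-lemma step (equality of derivatives a.e.\ on $\{u=v\}$) that the paper leaves implicit.
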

\begin{rem*}
In fact, this lemma is still true if we replace $\rho,-\bar{\rho}$
by any other upper and lower obstacles which agree on $\partial U$.
We can also replace the $0$ on the right hand sides by some measurable
function $f$. In addition, parts (d),(e) of Assumption \ref{assu: F}
are not needed here.
\end{rem*}
\begin{proof}
Let $w:=v-u$. Then on the open set $V:=\{u<\rho\}$ we have 
\[
0\ge F[v]-F[u]=\int_{0}^{1}\frac{d}{dt}\big(F[u+tw]\big)dt=-a_{ij}D_{ij}^{2}w+b_{i}D_{i}w+cw,
\]
where $a_{ij}:=-\int_{0}^{1}F_{M_{ij}}[u+tw]dt$, $b_{i}:=\int_{0}^{1}F_{p_{i}}[u+tw]dt$,
and $c:=\int_{0}^{1}F_{z}[u+tw]dt$. Note that on $\partial V$ we
have $u=\rho\ge v$, so $w\le0$. Hence by Aleksandrov-Bakelman-Pucci
maximum principle (Theorem 9.1 of \citep{MR1814364}) we get 
\[
\sup_{V}w\le\sup_{\partial V}w^{+}=0.
\]
Thus $v-u\le0$ as desired. Finally note that when $u=-\bar{\rho}$
we have $u\ge v\ge-\bar{\rho}$, thus $v=-\bar{\rho}$ too. Therefore
we have $F[u]=F[v]\le0$ a.e. on $\{u=-\bar{\rho}\}$. Hence we must
have $F[u]=0$ a.e. on $\{u=-\bar{\rho}\}$.
\end{proof}
\begin{proof}[\textbf{Proof of Theorem \ref{thm: dbl obstcl =00003D PDE grad}}]
\label{proof Thm 2} By Lemma \ref{lem: u<v} we know that 
\[
\begin{cases}
F[u]=0 & \textrm{ a.e. in }\{-\bar{\rho}\le u<\rho\},\\
F[u]\le0 & \textrm{ a.e. on }\{u=\rho\}.
\end{cases}
\]
Hence we have $F[u]\le0$. Also, on $\{u=\rho\}$ we have $Du=D\rho$,
since $u-\rho$ attains its maximum there. But we know that $\gamma^{\circ}(D\rho)=1$
a.e. (see (\ref{eq: H-J eq})). Therefore when $\gamma^{\circ}(Du)<1$
we must have $F[u]=0$ a.e. Thus we only need to show that $\gamma^{\circ}(Du)\le1$
a.e. in $V=\{u<\rho\}$.

Now note that for any ball $B\subset V$ there is a $C^{2,\alpha}(B)\cap C^{0}(\overline{B})$
solution of 
\[
\begin{cases}
F[w]=0 & \textrm{ in }B,\\
w=u & \textrm{ on }\partial B,
\end{cases}
\]
as shown in \citep{trudinger1984boundary}. However, similarly to
the proof of Lemma \ref{lem: u<v} we can show that due to the Aleksandrov-Bakelman-Pucci
maximum principle we have $w=u$ on $B$. Therefore $u$ is $C^{2,\alpha}$
inside $V$. Thus by Lemma 17.16 of \citep{MR1814364} we have $u\in C^{3,\alpha}(V)$.
Let $\xi\in\mathbb{R}^{n}$ be a vector with $\gamma(\xi)=1$, and
differentiate the equation $F[u]=0$ to obtain 
\[
F_{z}[u]D_{\xi}u+F_{p_{i}}[u]D_{i}D_{\xi}u+F_{M_{ij}}[u]D_{ij}^{2}D_{\xi}u=0.
\]
Now on $\partial V\cap U$ we have $Du=D\rho$, so $D_{\xi}u=D_{\xi}\rho\le1$
due to (\ref{eq: gen Cauchy-Schwartz}). On $\partial V\cap\partial U$
we need to consider two cases. Since $-\bar{\rho}\le u\le\rho$ in
$U$ and $u=\varphi=\rho$ on $\partial U$, when $\xi$ is not tangent
to $\partial U$ we have 
\[
-\bar{\rho}(y+t\xi)-(-\bar{\rho}(y))\le u(y+t\xi)-u(y)\le\rho(y+t\xi)-\rho(y),
\]
where $y\in\partial U$, and the sign of $t$ is chosen so that $y+t\xi\in U$.
Hence by dividing by $t$ and taking $t\to0$ we either obtain $D_{\xi}u\le D_{\xi}\rho\le1$,
or $D_{\xi}u\le-D_{\xi}\bar{\rho}\le1$. On the other hand, when $\xi$
is tangent to $\partial U$ we have $D_{\xi}u=D_{\xi}\rho\le1$, since
$u=\varphi=\rho$ on $\partial U$. Thus we have shown that $D_{\xi}u\le1$
on $\partial V$. Hence by the maximum principle we have $D_{\xi}u\le1$
in $V$. Therefore by (\ref{eq: gen Cauchy-Schwartz 2}) we get $\gamma^{\circ}(Du)\le1$
in $V$, as desired.
\end{proof}
We will later need the following additional assumption about $F$
to make sure that $W^{2,p}$ estimates hold for the solutions of the
equation $F[u]=0$ (see for example \citep{winter2009w2}). However,
any other assumption that gives us the $W^{2,p}$ estimates can also
be used instead.
\begin{assumption}
\label{assu: F Lip}For every $x\in\overline{U}$ we have $F(x,0,0,0)=0$.
Also, $F$ is uniformly elliptic and Lipschitz, i.e. there are constants
$c_{4},c_{5}>0$ such that 
\begin{align}
\mathcal{P}^{-}(M-N)-c_{4}|p-q|-c_{5}|z-w| & \le F(x,z,p,M)-F(x,w,q,N)\nonumber \\
 & \qquad\qquad\le\mathcal{P}^{+}(M-N)+c_{4}|p-q|+c_{5}|z-w|\label{eq: F Lip}
\end{align}
for all $z,w\in\mathbb{R}$, $p,q\in\mathbb{R}^{n}$, and $M,N\in\mathcal{S}^{n\times n}$.
Here $\mathcal{P}^{\pm}$ are the Pucci operators 
\[
\mathcal{P}^{-}(M):=\inf_{\lambda I\le A\le\Lambda I}\mathrm{tr}(AM),\qquad\mathcal{P}^{+}(M):=\sup_{\lambda I\le A\le\Lambda I}\mathrm{tr}(AM),
\]
and $\lambda,\Lambda>0$ are the same as in Assumption \ref{assu: F}.
\end{assumption}
Next let us introduce the following terminology for the solutions
of the double obstacle problem (\ref{eq: dbl obstcl}). (The notation
is motivated by the physical properties of the elastic-plastic torsion
problem, in which $E$ stands for the \textit{elastic} region, and
$P$ stands for the \textit{plastic} region.)
\begin{defn}
\label{def: plastic}Let 
\begin{eqnarray*}
P^{+}:=\{x\in U:u(x)=\rho(x)\}, &  & P^{-}:=\{x\in U:u(x)=-\bar{\rho}(x)\}.
\end{eqnarray*}
Then $P:=P^{+}\cup P^{-}$ is called the \textbf{coincidence} set;
and 
\[
E:=\{x\in U:-\bar{\rho}(x)<u(x)<\rho(x)\}
\]
is called the \textbf{non-coincidence} set. We also define the \textbf{free
boundary} to be $\partial E\cap U$.
\end{defn}

\subsection{\label{subsec: Reg gaug}Regularity of the gauge function}

In this subsection we review some of the properties of $\gamma,\gamma^{\circ}$
and $K,K^{\circ}$ briefly. For detailed explanations and proofs see
\citep{MR3155183}. Recall that the gauge function $\gamma$ satisfies
\begin{align*}
 & \gamma(rx)=r\gamma(x),\\
 & \gamma(x+y)\le\gamma(x)+\gamma(y),
\end{align*}
for all $x,y\in\mathbb{R}^{n}$ and $r\ge0$. Also, note that as $B_{c}(0)\subseteq K\subseteq B_{C}(0)$
for some $C\ge c>0$, we have 
\[
\frac{1}{C}|x|\le\gamma(x)\le\frac{1}{c}|x|
\]
for all $x\in\mathbb{R}^{n}$.

It is well known that for all $x,y\in\mathbb{R}^{n}$ we have 
\begin{equation}
\langle x,y\rangle\leq\gamma(x)\gamma^{\circ}(y).\label{eq: gen Cauchy-Schwartz}
\end{equation}
In fact, more is true and we have 
\begin{equation}
\gamma^{\circ}(y)=\underset{x\ne0}{\max}\frac{\langle x,y\rangle}{\gamma(x)}.\label{eq: gen Cauchy-Schwartz 2}
\end{equation}
For a proof of this see page 54 of \citep{MR3155183}.

It is easy to see that the the strict convexity of $K$ (which means
that $\partial K$ does not contain any line segment) is equivalent
to the strict convexity of $\gamma$. By homogeneity of $\gamma$
the latter is equivalent to 
\[
\gamma(x+y)<\gamma(x)+\gamma(y)
\]
when $x\ne cy$ and $y\ne cx$ for any $c\ge0$.

Suppose that $\partial K$ is $C^{k,\alpha}$ $(k\ge1\,,\,0\le\alpha\le1)$.
Then $\gamma$ is $C^{k,\alpha}$ on $\mathbb{R}^{n}-\{0\}$ (see
for example \citep{SAFDARI202176}). Conversely, note that as $\partial K=\{\gamma=1\}$
and $D\gamma\ne0$ by (\ref{eq: g0 (Dg)=00003D1}), $\partial K$
is as smooth as $\gamma$. Suppose in addition that $K$ is strictly
convex. Then $\gamma$ is strictly convex too. By Remark 1.7.14 and
Theorem 2.2.4 of \citep{MR3155183}, $K^{\circ}$ is also strictly
convex and its boundary is $C^{1}$. Therefore $\gamma^{\circ}$ is
strictly convex, and it is $C^{1}$ on $\mathbb{R}^{n}-\{0\}$. Furthermore,
by Corollary 1.7.3 of \citep{MR3155183}, for $x\ne0$ we have 
\begin{eqnarray}
D\gamma(x)\in\partial K^{\circ}, &  & D\gamma^{\circ}(x)\in\partial K,\label{eq: g0 (Dg)=00003D1}
\end{eqnarray}
or equivalently 
\[
\gamma^{\circ}(D\gamma)=1,\qquad\gamma(D\gamma^{\circ})=1.
\]
In particular $D\gamma,D\gamma^{\circ}$ are nonzero on $\mathbb{R}^{n}-\{0\}$.

Now assume that $k\ge2$ and the principal curvatures of $\partial K$
are positive everywhere. Then $K$ is strictly convex. We can also
show that $\gamma^{\circ}$ is $C^{k,\alpha}$ on $\mathbb{R}^{n}-\{0\}$.
To see this let $n_{K}:\partial K\to\mathbb{S}^{n-1}$ be the Gauss
map, i.e. let $n_{K}(y)$ be the outward unit normal to $\partial K$
at $y$. Then $n_{K}$ is $C^{k-1,\alpha}$ and its derivative is
an isomorphism at the points with positive principal curvatures, i.e.
everywhere. Hence $n_{K}$ is locally invertible with a $C^{k-1,\alpha}$
inverse $n_{K}^{-1}$, around any point of $\mathbb{S}^{n-1}$. Now
note that as it is well known, $\gamma^{\circ}$ equals the support
function of $K$, i.e. 
\[
\gamma^{\circ}(x)=\sup\{\langle x,y\rangle:y\in K\}.
\]
Thus as shown in page 115 of \citep{MR3155183}, for $x\ne0$ we have
\[
D\gamma^{\circ}(x)=n_{K}^{-1}(\frac{x}{|x|}).
\]
Which gives the desired result. As a consequence, $\partial K^{\circ}$
is $C^{k,\alpha}$ too. Furthermore, as shown on page 120 of \citep{MR3155183},
the principal curvatures of $\partial K^{\circ}$ are also positive
everywhere.

Let us recall a few more properties of $\gamma,\gamma^{\circ}$. Since
they are positively 1-homogeneous, $D\gamma,D\gamma^{\circ}$ are
positively 0-homogeneous, and $D^{2}\gamma,D^{2}\gamma^{\circ}$ are
positively $(-1)$-homogeneous, i.e. 
\begin{eqnarray}
\gamma(tx)=t\gamma(x), & D\gamma(tx)=D\gamma(x), & D^{2}\gamma(tx)=\frac{1}{t}D^{2}\gamma(x),\nonumber \\
\gamma^{\circ}(tx)=t\gamma^{\circ}(x), & D\gamma^{\circ}(tx)=D\gamma^{\circ}(x), & D^{2}\gamma^{\circ}(tx)=\frac{1}{t}D^{2}\gamma^{\circ}(x),\label{eq: homog}
\end{eqnarray}
for $x\ne0$ and $t>0$. As a result, using Euler's theorem on homogeneous
functions we get 
\begin{eqnarray}
\langle D\gamma(x),x\rangle=\gamma(x), &  & D^{2}\gamma(x)\,x=0,\nonumber \\
\langle D\gamma^{\circ}(x),x\rangle=\gamma^{\circ}(x), &  & D^{2}\gamma^{\circ}(x)\,x=0,\label{eq: Euler formula}
\end{eqnarray}
for $x\ne0$. Here $D^{2}\gamma(x)\,x$ is the action of the matrix
$D^{2}\gamma(x)$ on the vector $x$. 

Finally let us mention that by Corollary 2.5.2 of \citep{MR3155183},
when $x\ne0$, the eigenvalues of $D^{2}\gamma(x)$ are $0$ with
the corresponding eigenvector $x$, and $\frac{1}{|x|}$ times the
principal radii of curvature of $\partial K^{\circ}$ at the unique
point that has $x$ as an outward normal vector. Remember that the
principal radii of curvature are the reciprocals of the principal
curvatures. Thus by our assumption, the eigenvalues of $D^{2}\gamma(x)$
are all positive except for one $0$. We have a similar characterization
of the eigenvalues of $D^{2}\gamma^{\circ}(x)$.

\subsection{\label{subsec: Reg Opstacles}Regularity of the obstacles}

Next let us consider the obstacles $\rho,-\bar{\rho}$, and review
some of their properties. All the results of this subsection are proved
in \citep{SAFDARI202176}.
\begin{defn}
When $\rho(x)=\gamma(x-y)+\varphi(y)$ for some $y\in\partial U$,
we call $y$ a \textbf{$\boldsymbol{\rho}$-closest} point to $x$
on $\partial U$. Similarly, when $\bar{\rho}(x)=\gamma(y-x)-\varphi(y)$
for some $y\in\partial U$, we call $y$ a \textbf{$\boldsymbol{\bar{\rho}}$-closest}
point to $x$ on $\partial U$.
\end{defn}
\begin{lem}
\label{lem: segment to the closest pt}Suppose $y$ is one of the
$\rho$-closest points on $\partial U$ to $x\in U$. Then 
\begin{enumerate}
\item[\upshape{(a)}] $y$ is a $\rho$-closest point on $\partial U$ to every point of
$]x,y[$. Therefore $\rho$ varies linearly along the line segment
$[x,y]$.
\item[\upshape{(b)}] If in addition, for all $x\ne y\in\R^{n}$ we have 
\begin{equation}
-\gamma(y-x)<\varphi(x)-\varphi(y)<\gamma(x-y),\label{eq: phi strct Lip}
\end{equation}
then we also have $]x,y[\subset U$.
\item[\upshape{(c)}] If in addition $\gamma$ is strictly convex, and the strict Lipschitz
property (\ref{eq: phi strct Lip}) for $\varphi$ holds, then $y$
is the unique $\rho$-closest point on $\partial U$ to the points
of $]x,y[$.
\end{enumerate}
\end{lem}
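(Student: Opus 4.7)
My plan is to handle the three parts in order, with part (a) laying the groundwork for parts (b) and (c).

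For part (a), the approach is to fix $z \in {]x,y[}$, write $z = (1-t)x + ty$ with $t \in (0,1)$, and exploit the positive $1$-homogeneity of $\gamma$ to split $\gamma(x-y) = \gamma(x-z) + \gamma(z-y)$. The upper bound $\rho(z) \le \gamma(z-y) + \varphi(y)$ is immediate from the definition (\ref{eq: rho}). The matching lower bound will follow by applying the Lipschitz estimate (\ref{eq: rho Lip}) in the form $\rho(z) \ge \rho(x) - \gamma(x-z)$ and substituting the hypothesis $\rho(x) = \gamma(x-y) + \varphi(y)$. Linearity of $\rho$ along $[x,y]$ is then a one-line consequence once we note $\rho(y) = \varphi(y)$.

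For part (b), I would argue by contradiction. If ${]x,y[}$ were not contained in $U$, then along the segment from $x \in U$ to $y \in \partial U$ there would be a first exit point $z \in \partial U \cap {]x,y[}$; in particular $z \ne y$. Part (a) forces $\rho(z) = \gamma(z-y) + \varphi(y)$, while $z \in \partial U$ forces $\rho(z) = \varphi(z)$. Subtracting yields $\varphi(z) - \varphi(y) = \gamma(z-y)$, contradicting the strict Lipschitz hypothesis (\ref{eq: phi strct Lip}) on $\varphi$.

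For part (c), suppose $y'$ is a second $\rho$-closest point on $\partial U$ to some $z \in {]x,y[}$. The key observation is the chain
\[
\rho(x) \le \gamma(x-y') + \varphi(y') \le \gamma(x-z) + \gamma(z-y') + \varphi(y') = \gamma(x-z) + \rho(z) = \rho(x),
\]
where the last equality uses the linearity from part (a). All intermediate inequalities must collapse to equalities, yielding two facts: $y'$ is itself a $\rho$-closest point to $x$, and $\gamma(x-y') = \gamma(x-z) + \gamma(z-y')$. Strict convexity of $\gamma$ promotes this subadditivity equality to directional alignment of $x-z$ and $z-y'$, placing $y'$ on the line through $x$ and $y$ on the same side of $x$ as $y$. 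The resulting trichotomy $y' \in {]x,y[}$, $y' = y$, or $y'$ beyond $y$ is then resolved by applying part (b) twice: to the original pair $(x,y)$ to rule out the first case, and to the new pair $(x,y')$ to rule out the third, since $y \in \partial U \cap {]x,y'[}$ would contradict ${]x,y'[} \subset U$.

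I expect the main obstacle to be exactly this last collapse argument in (c): one must notice that the equality in the chain above delivers both the alignment (via strict convexity) and the re-usability of part (b) (via $y'$ being $\rho$-closest to $x$). Once both are in hand, the trichotomy closes cleanly.
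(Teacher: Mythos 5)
Your proof is correct in all three parts: the triangle inequality $\rho(z)\ge\rho(x)-\gamma(x-z)$ paired with the homogeneity identity $\gamma(x-y)=\gamma(x-z)+\gamma(z-y)$ pins down (a), the first-exit-point argument for (b) is sound, and the collapse-of-inequalities chain followed by strict convexity and a double application of (b) correctly settles (c). The paper itself does not reproduce a proof of this lemma (it is cited from an earlier work), so there is no internal argument to compare against; your route is the natural one and holds up on careful inspection.
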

Next, we generalize the notion of ridge introduced by \citet{MR0184503},
and \citet{MR534111}. Intuitively, the $\rho$-ridge is the set of
singularities of $\rho$.
\begin{defn}
\label{def: ridge}The \textbf{$\boldsymbol{\rho}$-ridge} of $U$
is the set of all points $x\in U$ where $\rho(x)$ is not $C^{1,1}$
in any neighborhood of $x$. We denote it by 
\[
R_{\rho}.
\]
We have shown that when $\gamma$ is strictly convex and the strict
Lipschitz property (\ref{eq: phi strct Lip}) for $\varphi$ holds,
the points with more than one $\rho$-closest point on $\partial U$
belong to $\rho$-ridge, since $\rho$ is not differentiable at them.
This subset of the $\rho$-ridge is denoted by 
\[
R_{\rho,0}.
\]
Similarly we define $R_{\bar{\rho}},R_{\bar{\rho},0}$.%
\end{defn}
We know that $\rho,\bar{\rho}$ are Lipschitz functions. We want to
characterize the set over which they are more regular. In order to
do that, we need to impose some additional restrictions on $K,U$
and $\varphi$.
\begin{assumption}
\label{assu: K,U}Suppose that $k\ge2$ is an integer, and $0\le\alpha\le1$.
We assume that 
\begin{enumerate}
\item[\upshape{(a)}] $K\subset\R^{n}$ is a compact convex set whose interior contains
the origin. In addition, $\partial K$ is $C^{k,\alpha}$, and its
principal curvatures are positive at every point.
\item[\upshape{(b)}] $U\subset\R^{n}$ is a bounded open set, and $\partial U$ is $C^{k,\alpha}$.
\item[\upshape{(c)}] $\varphi:\R^{n}\to\R$ is a $C^{k,\alpha}$ function, such that $\gamma^{\circ}(D\varphi)<1$.
\end{enumerate}
\end{assumption}
\begin{rem*}
As shown in Subsection \ref{subsec: Reg gaug}, the above assumption
implies that $K,\gamma$ are strictly convex. In addition, $K^{\circ},\gamma^{\circ}$
are strictly convex, and $\partial K^{\circ},\gamma^{\circ}$ are
also $C^{k,\alpha}$. Furthermore, the principal curvatures of $\partial K^{\circ}$
are also positive at every point. Similar conclusions obviously hold
for $-K,-\varphi$ and $(-K)^{\circ}=-K^{\circ}$ too. Hence in the
sequel, whenever we prove a property for $\rho$, it holds for $\bar{\rho}$
too.\textcolor{red}{}
\end{rem*}
Let $\nu$ be the inward unit normal to $\partial U$. Then for every
$y\in\partial U$ there is a unique scalar $\lambda(y)>0$ such that
\begin{equation}
\gamma^{\circ}\big(D\varphi(y)+\lambda(y)\nu(y)\big)=1.\label{eq: lambda}
\end{equation}
We set 
\begin{equation}
\mu(y):=D\varphi(y)+\lambda(y)\nu(y).\label{eq: mu}
\end{equation}
We also set 
\begin{equation}
X:=\frac{1}{\langle D\gamma^{\circ}(\mu),\nu\rangle}D\gamma^{\circ}(\mu)\otimes\nu,\label{eq: X}
\end{equation}
where $a\otimes b$ is the rank 1 matrix whose action on a vector
$z$ is $\langle z,b\rangle a$. Let $x\in U$, and suppose $y$ is
one of the $\rho$-closest points to $x$ on $\partial U$. Then we
have 
\begin{equation}
\frac{x-y}{\gamma(x-y)}=D\gamma^{\circ}(\mu(y)).\label{eq: K-normal}
\end{equation}
Or equivalently 
\begin{equation}
x=y+\big(\rho(x)-\varphi(y)\big)\,D\gamma^{\circ}(\mu(y)).\label{eq: parametrize by rho}
\end{equation}
Also, $\rho$ is differentiable at $x$ if and only if $x\in U-R_{\rho,0}$.
And in that case we have 
\begin{equation}
D\rho(x)=\mu(y),\label{eq: D rho (x)}
\end{equation}
where $y$ is the unique $\rho$-closest point to $x$ on $\partial U$.

In addition, for every $y\in\partial U$ there is an open ball $B_{r}(y)$
such that $\rho$ is $C^{k,\alpha}$ on $\overline{U}\cap B_{r}(y)$.
Furthermore, $y$ is the $\rho$-closest point to some points in $U$,
and we have 
\begin{equation}
D\rho(y)=\mu(y).\label{eq: D rho (y)}
\end{equation}
We also have 
\begin{equation}
D^{2}\rho(y)=(I-X^{T})\big(D^{2}\varphi(y)+\lambda(y)D^{2}d(y)\big)(I-X),\label{eq: D2 rho (y)}
\end{equation}
where $I$ is the identity matrix, $d$ is the Euclidean distance
to $\partial U$, and $X$ is given by (\ref{eq: X}).
\begin{rem*}
As a consequence, $R_{\rho}$ has a positive distance from $\partial U$.
\end{rem*}
Let $x\in U-R_{\rho,0}$, and let $y$ be the unique $\rho$-closest
point to $x$ on $\partial U$. Let 
\begin{align}
 & W=W(y):=-D^{2}\gamma^{\circ}(\mu(y))D^{2}\rho(y),\nonumber \\
 & Q=Q(x):=I-\big(\rho(x)-\varphi(y)\big)W,\label{eq: W,Q}
\end{align}
where $I$ is the identity matrix. If $\det Q\ne0$ then $\rho$ is
$C^{k,\alpha}$ on a neighborhood of $x$. In addition we have 
\begin{equation}
D^{2}\rho(x)=D^{2}\rho(y)Q(x)^{-1}.\label{eq: D2 rho (x)}
\end{equation}
We also have 
\begin{equation}
x\in R_{\rho}\textrm{ if and only if }\det Q(x)=0.\label{eq: ridge =00003D Q=00003D0}
\end{equation}

\begin{rem*}
When $\varphi=0$, the function $\rho$ is the distance to $\partial U$
with respect to the Minkowski distance defined by $\gamma$. So this
case has a geometric interpretation. An interesting fact is that in
this case the eigenvalues of $W$ coincide with the notion of curvature
of $\partial U$ with respect to some Finsler structure. For the details
see \citep{MR2336304}.
\end{rem*}
\begin{lem}
\label{lem: D2 rho decreas}Suppose the Assumption \ref{assu: K,U}
holds. Let $x\in U-R_{\rho}$, and let $y$ be the unique $\rho$-closest
point to $x$ on $\partial U$. Then we have 
\[
D_{\xi\xi}^{2}\rho(x)\le D_{\xi\xi}^{2}\rho(y)
\]
for every $\xi\in\R^{n}$.
\end{lem}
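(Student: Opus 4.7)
The plan is to interpolate between $y$ and $x$ along the line segment that realizes the distance, and use the explicit formula (\ref{eq: D2 rho (x)}) for $D^{2}\rho$ to show that $D^{2}_{\xi\xi}\rho$ is monotone nonincreasing along this segment.

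Let $t:=\gamma(x-y)=\rho(x)-\varphi(y)$ and parametrize
\[
x(s):=y+s\,D\gamma^{\circ}(\mu(y)),\qquad s\in[0,t].
\]
By (\ref{eq: K-normal}) and (\ref{eq: parametrize by rho}) we have $x(0)=y$ and $x(t)=x$; and since $\gamma(D\gamma^{\circ}(\mu(y)))=1$ by (\ref{eq: g0 (Dg)=00003D1}), combined with Lemma~\ref{lem: segment to the closest pt}(a), we get $\rho(x(s))-\varphi(y)=s$. Lemma~\ref{lem: segment to the closest pt}(c) then guarantees that $y$ is the unique $\rho$-closest point on $\partial U$ to every $x(s)$ with $s\in(0,t)$. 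The first key step is to verify that in addition $x(s)\in U-R_{\rho}$ for all $s\in[0,t]$, so that (\ref{eq: D2 rho (x)}) applies uniformly along the segment. By the characterization (\ref{eq: ridge =00003D Q=00003D0}) this amounts to $\det Q(x(s))\ne 0$ on $[0,t]$, a propagation-of-regularity statement morally saying that the minimizing characteristic from $y$ to $x$ cannot cross the ridge before reaching $x$. I expect this to be the main technical obstacle, since \textit{a priori} the polynomial $s\mapsto\det Q(x(s))$ could vanish in the interior of $[0,t]$; ruling this out should follow from the uniqueness of $y$ as closest point propagating along the ray, together with the fact that on any interval where $\det Q(x(s))\ne 0$ the formula (\ref{eq: D2 rho (x)}) yields a continuous $D^{2}\rho(x(s))$, so no singularity can develop strictly inside $[0,t]$ while $y$ remains the unique closest point.

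Once this is granted, set $A:=D^{2}\rho(y)$ and $B:=D^{2}\gamma^{\circ}(\mu(y))$. Then $W=-BA$ by (\ref{eq: W,Q}), and (\ref{eq: D2 rho (x)}) applied at $x(s)$ becomes
\[
D^{2}\rho(x(s))\;=\;A\,(I+sBA)^{-1},\qquad s\in[0,t].
\]
Differentiating in $s$ via $\frac{d}{ds}(I+sBA)^{-1}=-(I+sBA)^{-1}BA(I+sBA)^{-1}$ yields the Riccati-type identity
\[
\frac{d}{ds}D^{2}\rho(x(s))\;=\;-\,D^{2}\rho(x(s))\,B\,D^{2}\rho(x(s)).
\]

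Finally, fix $\xi\in\R^{n}$ and set $f(s):=\xi^{T}D^{2}\rho(x(s))\xi$. Because $D^{2}\rho(x(s))$ is symmetric (being a Hessian) and $B=D^{2}\gamma^{\circ}(\mu(y))\ge 0$ by convexity of $\gamma^{\circ}$, writing $v(s):=D^{2}\rho(x(s))\xi$ gives
\[
f'(s)\;=\;-\,v(s)^{T}B\,v(s)\;\le\;0.
\]
Therefore $f$ is nonincreasing on $[0,t]$, and $f(t)\le f(0)$ is exactly $D^{2}_{\xi\xi}\rho(x)\le D^{2}_{\xi\xi}\rho(y)$.
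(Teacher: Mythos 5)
Your approach is fundamentally the same as the one the paper sketches in the ``Monotonicity of the second derivative of the solutions to Hamilton--Jacobi equations'' discussion: parametrize the characteristic ray from $y$ to $x$, derive a Riccati-type ODE for $s\mapsto D_{\xi\xi}^{2}\rho(x(s))$, and conclude monotonicity from the convexity of $\gamma^{\circ}$. Your derivation of the ODE, however, is cleaner and more robust. You obtain it algebraically from the explicit formula (\ref{eq: D2 rho (x)}), namely $D^{2}\rho(x(s))=A(I+sBA)^{-1}$, which only needs the $C^{2}$ regularity furnished by Assumption~\ref{assu: K,U}; the paper's informal sketch instead differentiates the Hamilton--Jacobi equation twice and requires $\rho\in C^{3}$, a gap the paper itself flags (``the assumptions in Lemma~\ref{lem: D2 rho decreas} are weaker than what we assume in the following calculations'') before deferring the actual proof to \citep{SAFDARI202176}. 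Your Riccati identity is correct — one should check, and it does hold, that $A(I+sBA)^{-1}$ is symmetric — and the final step $f'(s)=-v(s)^{T}Bv(s)\le 0$ is exactly (\ref{eq: ODE D2 rho}) with $B=D^{2}\gamma^{\circ}(\mu(y))\ge 0$ constant along the ray because $D\rho(x(s))\equiv\mu(y)$.

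The one genuine gap is the step you flag yourself: that $\det Q(x(s))\ne 0$, i.e.\ $x(s)\in U-R_{\rho}$, for all $s\in[0,t]$. This is needed to invoke (\ref{eq: D2 rho (x)}) on the whole segment, and the heuristic you offer does not actually rule out a point $x(s_{0})\in R_{\rho}-R_{\rho,0}$ — that is, a point with a \emph{unique} $\rho$-closest point at which $\rho$ nonetheless fails to be $C^{1,1}$. Uniqueness of the closest point does propagate along the segment by Lemma~\ref{lem: segment to the closest pt}(c), but this only gives $x(s)\notin R_{\rho,0}$, not $x(s)\notin R_{\rho}$; and ``$D^{2}\rho$ is continuous where $\det Q\ne 0$'' is circular as a way of showing $\det Q$ never vanishes. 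The right move is to cite the fact, already invoked in the proof of Proposition~\ref{prop: ridge is elastic} and proved in \citep{SAFDARI202176}, that for $z\in\,]x,y[$ with $y$ the unique $\rho$-closest point to $x$ one has $\det Q(z)\ne 0$. This is ultimately a conjugate-point statement: past the first zero of $\det(I+sBA)$ the ray ceases to be minimizing, which is incompatible with $y$ being the $\rho$-closest point to the entire segment up to $x$. With that citation in place of your heuristic, the proof is complete.
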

As we mentioned in the introduction, the above monotonicity property
is true because $\rho$ satisfies the Hamilton-Jacobi equation (\ref{eq: H-J eq}),
and  the segment $]x,y[$ is the characteristic curve associated
to it. Let us review the general case of the monotonicity property
below. (Although note that the assumptions in Lemma \ref{lem: D2 rho decreas}
are weaker than what we assume in the following calculations. For
a proof of Lemma \ref{lem: D2 rho decreas} see the proof of Lemma
4 in \citep{SAFDARI202176}.)%

\textbf{Monotonicity of the second derivative of the solutions to
Hamilton-Jacobi equations:} Suppose $v$ satisfies the equation $\tilde{H}(Dv)=0$,
where $\tilde{H}$ is a convex function. Let $x(s)$ be a characteristic
curve of the equation. Then we have $\dot{x}=D\tilde{H}$. Let us
assume that $v$ is $C^{3}$ on a neighborhood of the image of $x(s)$.
Let 
\[
q(s):=D_{\xi\xi}^{2}v(x(s))=\xi_{i}\xi_{j}D_{ij}^{2}v
\]
for some vector $\xi$. Then we have 
\[
\dot{q}=\xi_{i}\xi_{j}D_{ijk}^{3}v\,\dot{x}^{k}=\xi_{i}\xi_{j}D_{ijk}^{3}vD_{k}\tilde{H}.
\]
On the other hand, if we differentiate the equation we get $D_{k}\tilde{H}D_{ik}^{2}v=0$.
And if we differentiate one more time we get 
\[
D_{kl}^{2}\tilde{H}D_{jl}^{2}vD_{ik}^{2}v+D_{k}\tilde{H}D_{ijk}^{3}v=0.
\]
Now if we multiply the above expression by $\xi_{i}\xi_{j}$, and
sum over $i,j$, we obtain the following Riccati type equation 
\begin{equation}
\dot{q}=-\xi^{T}D^{2}v\,D^{2}\tilde{H}\,D^{2}v\xi.\label{eq: ODE D2 rho}
\end{equation}
So $\dot{q}\le0$, since $\tilde{H}$ is convex. Thus we have 
\[
D_{\xi\xi}^{2}v(x(s))=q(s)\le q(0)=D_{\xi\xi}^{2}v(x(0)),
\]
as desired. This result also holds in the more general case of $\tilde{H}(x,v,Dv)=0$,
when $\tilde{H}$ is a convex function in all of its arguments (see
\citep{SAFDARI202176}).

\section{\label{sec: Proof Thm 3}Proof of Theorem \ref{thm: Reg dbl obstcl}}

In this section we prove Theorem \ref{thm: Reg dbl obstcl}, i.e.
we will prove that the double obstacle problem (\ref{eq: dbl obstcl})
has a solution $u$ in $W^{2,\infty}$, without assuming any regularity
about $K$. To this end first we need to prove Proposition \ref{prop: ridge is elastic},
which says that when $\partial K$ is smooth enough $u$ does not
touch the obstacles $\rho,-\bar{\rho}$ at their singularities. Before
that we need a few preliminary results. Throughout this section (before
the proof of Theorem \ref{thm: Reg dbl obstcl}) we assume that
\begin{enumerate}
\item $F$ does not depend on $x$, and satisfies Assumptions \ref{assu: F},\ref{assu: F C2}.
\item $\partial U$ is $C^{1}$.
\item $\varphi$ satisfies the Lipschitz property (\ref{eq: phi Lip}).
\end{enumerate}
Remember that when $u$ is a solution of the double obstacle problem
(\ref{eq: dbl obstcl}), we denote its non-coincidence and coincidence
sets by $E,P^{\pm}$, respectively.
\begin{lem}
Let $u\in C^{1}(\overline{U})\cap W_{\mathrm{loc}}^{2,n}(U)\cap W_{\bar{\rho},\rho}(U)$
be a solution of the double obstacle problem (\ref{eq: dbl obstcl}).
Then we have 
\[
\gamma^{\circ}(Du)\le1.
\]
\end{lem}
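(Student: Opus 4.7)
The plan is to reduce the inequality $\gamma^{\circ}(Du)\le 1$ to the one-sided directional bound $D_{\xi}u\le 1$ for every $\xi\in\R^{n}$ with $\gamma(\xi)=1$; by the characterization (\ref{eq: gen Cauchy-Schwartz 2}) these two statements are equivalent. I would establish this bound separately on the coincidence set $P=P^{+}\cup P^{-}$ and on the non-coincidence set $E$, and then use the continuity of $Du$ on $\overline{U}$ (which holds since $u\in C^{1}(\overline{U})$) to pass from $U$ to $\overline{U}$.

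On $P^{+}=\{u=\rho\}$, the inequality $u\le\rho$ forces $\rho-u$ to attain a local minimum at every $x_{0}\in P^{+}$. Dividing $u(x_{0}+t\xi)-u(x_{0})\le\rho(x_{0}+t\xi)-\rho(x_{0})$ by $t>0$, sending $t\downarrow 0$, and invoking the global Lipschitz estimate (\ref{eq: rho Lip}) for $\rho$ together with the existence of $D_{\xi}u(x_{0})$ yields $D_{\xi}u(x_{0})\le\gamma(\xi)=1$. The case $x_{0}\in P^{-}=\{u=-\bar{\rho}\}$ reduces to the previous one by the substitution $(u,\varphi,K)\mapsto(-u,-\varphi,-K)$: applying the $P^{+}$ argument to $-u\le\bar{\rho}$ with $\gamma$ replaced by $\bar{\gamma}$ gives $D_{v}(-u)(x_{0})\le\bar{\gamma}(v)$ for every $v$, and specializing $v=-\xi$ together with $\bar{\gamma}(-\xi)=\gamma(\xi)$ produces $D_{\xi}u(x_{0})\le\gamma(\xi)=1$.

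On the open set $E$, I would argue via the linear maximum principle. Exactly as in the proof of Theorem \ref{thm: dbl obstcl =00003D PDE grad}, in every ball $B\subset E$ the double obstacle problem forces $F[u]=0$, and solvability of the Dirichlet problem from \citep{trudinger1984boundary} combined with the Aleksandrov--Bakelman--Pucci comparison used in Lemma \ref{lem: u<v} and with Lemma 17.16 of \citep{MR1814364} upgrades $u$ to $C^{3,\alpha}(E)$. Since $F$ does not depend on $x$, differentiating $F(u,Du,D^{2}u)=0$ in the direction $\xi$ produces the linear elliptic equation
\[
-a_{ij}D_{ij}^{2}w+b_{i}D_{i}w+cw=0,\qquad w:=D_{\xi}u,
\]
with $a_{ij}:=-F_{M_{ij}}$ uniformly elliptic, $b_{i}:=F_{p_{i}}$, and $c:=F_{z}\ge 0$ by Assumption \ref{assu: F}(c). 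Applying the weak maximum principle to $w-1$ on each connected component of $E$ (noting that the constant $1$ satisfies the operator with a non-positive right-hand side because $c\ge 0$) reduces the matter to verifying $w\le 1$ on $\partial E$.

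Since $\partial E\subset P\cup\partial U$, the bound on $\partial E\cap U\subset P$ has already been established. On $\partial E\cap\partial U$ I would copy the boundary analysis from the proof of Theorem \ref{thm: dbl obstcl =00003D PDE grad}: when $\xi$ is tangent to $\partial U$, the identity $u=\varphi$ and the hypothesis (\ref{eq: phi Lip}) give $D_{\xi}u=D_{\xi}\varphi\le\gamma(\xi)=1$; when $\xi$ is not tangent, selecting the sign of $t$ so that $y+t\xi\in U$ and combining this with the sandwich $-\bar{\rho}\le u\le\rho$ produces either $D_{\xi}u\le D_{\xi}\rho\le 1$ or $D_{\xi}u\le -D_{\xi}\bar{\rho}\le 1$. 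The main technical obstacle in this plan is the interior regularity step: one must be sure that $u$ is smooth enough inside $E$ both to differentiate the equation classically and to legitimately apply the linear maximum principle to $D_{\xi}u$. Once this regularity is secured, the remainder is essentially careful bookkeeping of the Lipschitz estimates for $\rho$, $\bar{\rho}$, and $\varphi$ recorded in Section \ref{sec: Prelim}.
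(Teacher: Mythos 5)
Your proposal is correct and follows essentially the same route as the paper, which simply invokes the proof of Theorem \ref{thm: dbl obstcl =00003D PDE grad} with $\{-\bar{\rho}<u<\rho\}$ in place of $\{u<\rho\}$: interior regularity in $E$, differentiating $F[u]=0$ in the direction $\xi$, a maximum-principle comparison on $E$, and the boundary case analysis at $\partial U$. The only (welcome) difference is that you prove $D_\xi u\le 1$ on $P^{\pm}$ pointwise from the Lipschitz estimate (\ref{eq: rho Lip}) before running the maximum principle, rather than appealing to $Du=D\rho$ at points of $P$ where $D\rho$ may not exist; note in passing that your parenthetical about the constant $1$ should read $L(w-1)=-c\le 0$, not that $L(1)\le0$, but the conclusion you draw is correct.
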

\begin{proof}
The proof is exactly the same as the proof of Theorem \ref{thm: dbl obstcl =00003D PDE grad}
on page \pageref{proof Thm 2}. We only need to consider the set $\{-\bar{\rho}<u<\rho\}$
instead of $\{u<\rho\}$.
\end{proof}
\begin{lem}
\label{lem: segment is plastic}Let $u\in C^{1}(\overline{U})\cap W_{\mathrm{loc}}^{2,n}(U)\cap W_{\bar{\rho},\rho}(U)$
be a solution of the double obstacle problem (\ref{eq: dbl obstcl}).
If $x\in P^{+}$, and $y$ is a $\rho$-closest point on $\partial U$
to $x$ such that $[x,y[\subset U$, then we have $[x,y[\subset P^{+}$.
Similarly, if $x\in P^{-}$, and $y$ is a $\bar{\rho}$-closest point
on $\partial U$ to $x$ such that $[x,y[\subset U$, then we have
$[x,y[\subset P^{-}$.%
\end{lem}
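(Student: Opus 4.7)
The plan is to combine two ingredients: the Lipschitz-type bound on $u$ coming from the gradient constraint $\gamma^{\circ}(Du)\le 1$ (which was just established by the preceding lemma), and the fact, recorded in Lemma~\ref{lem: segment to the closest pt}(a), that $\rho$ varies linearly along the segment from $x$ to its $\rho$-closest point $y$. These two together should be enough to squeeze $u$ against the upper obstacle on the whole segment.

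Fix $z\in[x,y[$. First I would use Lemma~\ref{lem: segment to the closest pt}(a) together with the definition of $\rho$ to write
\[
\rho(x)-\rho(z)=\gamma(x-y)-\gamma(z-y)=\gamma(x-z),
\]
where the last equality uses the positive $1$-homogeneity of $\gamma$ and the fact that $x-z$ is a nonnegative multiple of $x-y$. Next, since $[z,x]\subset[x,y[\subset U$ and $u\in C^{1}(\overline{U})$ with $\gamma^{\circ}(Du)\le 1$, I integrate along this segment and apply the generalized Cauchy--Schwarz inequality \eqref{eq: gen Cauchy-Schwartz}:
\[
u(x)-u(z)=\int_{0}^{1}\langle Du(z+s(x-z)),\,x-z\rangle\,ds\le\int_{0}^{1}\gamma^{\circ}(Du)\,\gamma(x-z)\,ds\le\gamma(x-z).
\]
Combining with $u(x)=\rho(x)$ and $u(z)\le\rho(z)$ gives
\[
\rho(z)\ge u(z)\ge u(x)-\gamma(x-z)=\rho(x)-\gamma(x-z)=\rho(z),
\]
so $u(z)=\rho(z)$, i.e., $z\in P^{+}$.

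The argument for $P^{-}$ is symmetric. If $x\in P^{-}$ and $y$ is a $\bar{\rho}$-closest boundary point with $[x,y[\subset U$, then the same linearity lemma (applied to $-K,-\varphi$, so to $\bar{\rho}$) yields $\bar{\rho}(x)-\bar{\rho}(z)=\gamma(z-x)$ for $z\in[x,y[$, and the integration bound gives $u(z)-u(x)\le\gamma(z-x)$; combining with $u(x)=-\bar{\rho}(x)$ and $u(z)\ge-\bar{\rho}(z)$ forces $u(z)=-\bar{\rho}(z)$.

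There is no real obstacle here: the only thing to double-check is that each step is actually justified under the mild hypotheses of this section, namely that $\gamma^{\circ}(Du)\le 1$ was proved in the preceding lemma assuming only $\partial U\in C^{1}$, the Lipschitz property \eqref{eq: phi Lip} for $\varphi$, and Assumptions~\ref{assu: F},\ref{assu: F C2} on $F$, and that Lemma~\ref{lem: segment to the closest pt}(a) requires no regularity or strict convexity of $K$. Thus both ingredients apply in the generality of the statement, and the proof is a short squeeze.
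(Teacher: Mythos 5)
Your proof is correct and takes essentially the same route as the paper: both arguments combine the fact (from Lemma~\ref{lem: segment to the closest pt}(a)) that the obstacle is affine along $[x,y]$ with the gradient bound $\gamma^{\circ}(Du)\le 1$, and conclude by squeezing $u$ against the obstacle on the segment. The paper phrases it as a monotonicity statement for the nonnegative difference $\tilde v := u-(-\bar\rho)$ (showing $D_{\xi}\tilde v\le 0$ along the segment, with $\tilde v$ vanishing at $x$), while you integrate $\langle Du,\,x-z\rangle$ from $z$ to $x$ and compare directly with the explicit linear expression for $\rho$ along the segment; these are the same idea, and your version sidesteps the minor technical remark the paper has to make about interpreting $D_{-\xi}\bar\rho$ as the derivative of a restriction rather than a true directional derivative.
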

\begin{rem*}
If the strict Lipschitz property (\ref{eq: phi strct Lip}) for $\varphi$
holds, then by Lemma \ref{lem: segment to the closest pt} we automatically
have $[x,y[\subset U$. However, this can hold in other cases too;
see for example the proof of Proposition \ref{prop: E , P}.
\end{rem*}
\begin{proof}
Suppose $x\in P^{-}$; the other case is similar. We have 
\[
u(x)=-\bar{\rho}(x)=-\gamma(y-x)+\varphi(y).
\]
Let $\tilde{v}:=u-(-\bar{\rho})\ge0$, and $\xi:=\frac{y-x}{\gamma(y-x)}=-\frac{x-y}{\bar{\gamma}(x-y)}$.
Then $\bar{\rho}$ varies linearly along the segment $]x,y[$, since
$y$ is a $\bar{\rho}$-closest point to the points of the segment.
So we have $D_{\xi}(-\bar{\rho})=D_{-\xi}\bar{\rho}=1$ along the
segment. Note that we do not assume the differentiability of $\bar{\rho}$;
and $D_{-\xi}\bar{\rho}$ is just the derivative of the restriction
of $\bar{\rho}$ to the segment $]x,y[$. Now since 
\[
D_{\xi}u=\langle Du,\xi\rangle\le\gamma^{\circ}(Du)\gamma(\xi)\le1,
\]
we have $D_{\xi}\tilde{v}\le0$ along $]x,y[$. Thus as $\tilde{v}(x)=\tilde{v}(y)=0$,
and $\tilde{v}$ is continuous on the closed segment $[x,y]$, we
must have $\tilde{v}\equiv0$ on $[x,y]$. Therefore $u=-\bar{\rho}$
along the segment as desired.
\end{proof}
\begin{prop}
\label{prop: ridge is elastic}Let $u\in C^{1}(\overline{U})\cap W_{\mathrm{loc}}^{2,n}(U)\cap W_{\bar{\rho},\rho}(U)$
be a solution of the double obstacle problem (\ref{eq: dbl obstcl}).
In addition suppose that the Assumption \ref{assu: K,U} holds, and
$u\in W_{\mathrm{loc}}^{2,\infty}(U)$. Then we have 
\begin{eqnarray*}
R_{\rho}\cap P^{+}=\emptyset, & \hspace{2cm} & R_{\bar{\rho}}\cap P^{-}=\emptyset.
\end{eqnarray*}
\end{prop}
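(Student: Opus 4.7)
The plan is to argue by contradiction, focusing on $R_\rho \cap P^+\ne\emptyset$; the case $R_{\bar\rho}\cap P^-\ne\emptyset$ is entirely analogous via the substitution $K,\varphi,\rho \mapsto -K,-\varphi,\bar\rho$. Suppose $x\in R_\rho\cap P^+$. Since $u\in C^1(\overline U)$ touches $\rho$ from below at $x$ ($u(x)=\rho(x)$ and $u\le\rho$ nearby), $\rho$ must be differentiable at $x$ with $D\rho(x)=Du(x)$, so $x\notin R_{\rho,0}$ and $x$ has a unique $\rho$-closest point $y\in\partial U$. The strict inequality $\gamma^\circ(D\varphi)<1$ from Assumption \ref{assu: K,U} implies the strict Lipschitz property (\ref{eq: phi strct Lip}) via (\ref{eq: gen Cauchy-Schwartz}), so by Lemma \ref{lem: segment to the closest pt}(b,c) we have $]x,y[\subset U$ with $y$ the unique $\rho$-closest point to each $z\in]x,y[$. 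Moreover, along this segment $\det Q(z)$ reduces to a polynomial in $\gamma(z-y)$; since $x\in R_\rho$ forces $\det Q(x)=0$ by (\ref{eq: ridge =00003D Q=00003D0}), isolation of polynomial roots gives $\det Q(z)\ne 0$ for $z\in]x,y[$ close enough to $x$, so $\rho$ is $C^{k,\alpha}$ near each such $z$. Finally, Lemma \ref{lem: segment is plastic} gives $u\equiv\rho$ on $[x,y[$.

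The key quantitative input is the blow-up of $D^2\rho$ along the segment. The formula (\ref{eq: D2 rho (x)}) together with $\det Q(z)\to 0$ forces $|D^2\rho(z)|\to\infty$ as $z\to x$ along $]x,y[$ (the unbounded part of $Q(z)^{-1}$ coming from the near-kernel of $Q(x)$ cannot be annihilated by $D^2\rho(y)$, as otherwise the corresponding eigenvalue of $W(y)=-D^2\gamma^\circ(\mu(y))D^2\rho(y)$ would vanish, contradicting $\det Q(x)=0$). Combined with the upper bound $\lambda_{\max}(D^2\rho(z))\le\lambda_{\max}(D^2\rho(y))$ coming from Lemma \ref{lem: D2 rho decreas}, the smallest eigenvalue of $D^2\rho(z)$ must tend to $-\infty$. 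Hence there exist unit vectors $\eta_k$ and $z_k\in]x,y[$ with $z_k\to x$ and $D^2_{\eta_k\eta_k}\rho(z_k)\to -\infty$.

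The contradiction then follows from $u\in W^{2,\infty}_{\mathrm{loc}}(U)$. Fix a compact $V\subset U$ containing $x$ in its interior and set $L:=\|D^2u\|_{L^\infty(V)}$, so $Du$ is $L$-Lipschitz on $V$. For $z\in]x,y[$ sufficiently close to $x$ we have $z\in P^+$, $\rho$ is smooth near $z$, and (since $u-\rho$ has a local maximum at $z$ with both functions $C^1$ there) $u(z)=\rho(z)$ together with $Du(z)=D\rho(z)$. The Lipschitz estimate for $Du$ on $V$ gives, for every unit $\eta$ and small $t$,
\[
u(z+t\eta)\ge u(z)+\langle Du(z),\eta\rangle\, t-\tfrac{L}{2}\,t^2,
\]
while Taylor expansion of $\rho$ at $z$ gives
\[
\rho(z+t\eta)=\rho(z)+\langle D\rho(z),\eta\rangle\, t+\tfrac{1}{2}D^2_{\eta\eta}\rho(z)\,t^2+o(t^2).
\]
Subtracting, invoking $u\le\rho$, dividing by $t^2$, and letting $t\to 0$ yields $D^2_{\eta\eta}\rho(z)\ge -L$ for every unit $\eta$ and every such $z$, contradicting $D^2_{\eta_k\eta_k}\rho(z_k)\to-\infty$. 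The main obstacle is ensuring that the blow-up direction in the second paragraph is truly toward $-\infty$, which is where the explicit formula (\ref{eq: D2 rho (x)}) and the monotonicity Lemma \ref{lem: D2 rho decreas} enter in an essential way; everything else is a straightforward comparison argument exploiting the $C^{1,1}$ regularity of $u$ against the local smoothness of $\rho$ on the interior of the segment.
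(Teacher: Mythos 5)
Your proof is correct and follows the paper's overall strategy: reduce to a blow-up of $D^{2}\rho$ along the segment from $x$ to its unique $\rho$-closest point $y$, and contradict the local $W^{2,\infty}$ regularity of $u$. You diverge from the paper in two sub-arguments. First, to dispose of $R_{\rho,0}$, you squeeze $\rho$ at $x$ between the $C^{1}$ function $u$ from below and a $C^{1}$ upper touching function, concluding $\rho$ is differentiable there; this is valid but you should state explicitly that such an upper touching function exists, e.g.\ $\gamma(\cdot-y_{0})+\varphi(y_{0})$ for any $\rho$-closest point $y_{0}$ of $x$. The paper instead exhibits two characteristic segments along which $u=\rho$ and derives a contradiction from the strict convexity of $\gamma$ together with $\gamma^{\circ}(Du)\le1$; your route is a bit slicker and isolates that only the $C^{1}$ regularity of $u$ is used for this step. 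Second, you derive the blow-up $\lambda_{\min}(D^{2}\rho(z))\to-\infty$ from formula (\ref{eq: D2 rho (x)}), the vanishing of $\det Q(x)$, and the cap $D^{2}\rho(z)\le D^{2}\rho(y)$ furnished by Lemma \ref{lem: D2 rho decreas}, producing a possibly varying sequence of directions $\eta_{k}$; the paper cites from \citep{SAFDARI202176} that a single fixed direction $\xi$, transverse to the segment, already gives $D_{\xi\xi}^{2}\rho(z)\to-\infty$. Your version is self-contained, and the varying directions are harmless because your Taylor-expansion comparison produces the uniform lower bound $D_{\eta\eta}^{2}\rho(z)\ge-L$ for every unit $\eta$ and every $z\in P^{+}$ near $x$. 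The paper's closing step uses a finite-difference and mean-value argument in the fixed direction $\xi$, which amounts to the same contradiction.
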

\begin{proof}
Note that due to Assumption \ref{assu: K,U}, the strict Lipschitz
property (\ref{eq: phi strct Lip}) for $\varphi$ holds, and $\gamma$
is strictly convex. First let us show that $R_{\bar{\rho},0}\cap P^{-}=\emptyset$;
the other case is similar. Suppose to the contrary that $x\in R_{\bar{\rho},0}\cap P^{-}$.
Then there are at least two distinct points $y,z\in\partial U$ such
that 
\[
\bar{\rho}(x)=\gamma(y-x)-\varphi(y)=\gamma(z-x)-\varphi(z).
\]
Now by Lemma \ref{lem: segment is plastic}, we have $[x,y[,[x,z[\subset P^{-}$.
In other words, $u=-\bar{\rho}$ on both of these segments. Therefore
by Lemma \ref{lem: segment to the closest pt}, $u$ varies linearly
on both of these segments. Hence we get 
\[
\big\langle Du(x),\frac{y-x}{\gamma(y-x)}\big\rangle=1=\big\langle Du(x),\frac{z-x}{\gamma(z-x)}\big\rangle.
\]
However since $\gamma$ is strictly convex, this contradicts the fact
that $\gamma^{\circ}(Du(x))\le1$.

So we only need to show that $R_{\rho}-R_{\rho,0},R_{\bar{\rho}}-R_{\bar{\rho},0}$
do not intersect $P^{+},P^{-}$ respectively. Suppose to the contrary
that there is a point $x\in U$ which belongs to $(R_{\rho}-R_{\rho,0})\cap P^{+}$;
the other case is similar. Let $y$ be the unique $\rho$-closest
point to $x$ on $\partial U$. Then we must have $\det Q(x)=0$,
where $Q$ is given by (\ref{eq: W,Q}). Let $z\in]x,y[$. Then by
Lemma \ref{lem: segment to the closest pt} we have $z\in U$, and
$y$ is the unique $\rho$-closest point on $\partial U$ to $z$.
In addition, as proved in \citep{SAFDARI202176}, we have $\det Q(z)\ne0$.
Hence $\rho$ is $C^{k,\alpha}$ on a neighborhood of the line segment
$]x,y[$. We call this neighborhood $V$. In the proof of Theorem
4%
{} of \citep{SAFDARI202176} it has been shown that there is a vector
$\xi$ with $|\xi|=1$, which is not parallel to the segment $]x,y[$,
such that 
\begin{equation}
D_{\xi\xi}^{2}\rho(z)\to-\infty\qquad\textrm{ as }\;z\to x.\label{eq: D2 rho to - infty}
\end{equation}
Here $z$ converges to $x$ along the segment $]x,y[$.

Now since $x\in P^{+}$ we have $u(x)=\rho(x)$. Hence by lemma \ref{lem: segment is plastic}
we have $[x,y[\subset P^{+}$. Thus $u(z)=\rho(z)$ for every $z\in]x,y[$.
Also remember that $u\le\rho$ everywhere, since $u\in W_{\bar{\rho},\rho}$.
Hence $\rho-u$ is a $C^{1}$ function on $V$, which attains its
maximum, $0$, on $]x,y[$. Thus $Du=D\rho$ on the segment $]x,y[$.
Next we claim that for any $z\in]x,y[$ there are points $z_{i}:=z+\varepsilon_{i}\xi$
in $V$ converging to $z$ at which we have 
\[
D_{\xi}u(z_{i})\le D_{\xi}\rho(z_{i}).
\]
Since otherwise we would have $D_{\xi}u>D_{\xi}\rho$ on a segment
of the form $]z,z+r\xi[$ for some small $r>0$. But as $u(z)=\rho(z)$
and $Du(z)=D\rho(z)$, this implies that $u>\rho$ on $]z,z+r\xi[$;
which is a contradiction. Thus we get the desired. As a consequence
we have 
\[
D_{\xi}u(z_{i})-D_{\xi}u(z)\le D_{\xi}\rho(z_{i})-D_{\xi}\rho(z).
\]
By applying the mean value theorem to the restriction of $\rho$ to
the segment $[z,z_{i}]$ we get 
\begin{equation}
D_{\xi}u(z_{i})-D_{\xi}u(z)\le|z_{i}-z|D_{\xi\xi}^{2}\rho(w_{i})\label{eq: upper bd on D2u}
\end{equation}
for some $w_{i}\in]z,z_{i}[$. 

On the other hand, $u$ is a $W^{2,\infty}$ function on a neighborhood
of $x$ by our assumption. Consequently there is $C>0$ such that
\begin{equation}
-C\le\frac{D_{\xi}u(z_{i})-D_{\xi}u(z)}{|z_{i}-z|}\label{eq: lower bd on D2u}
\end{equation}
for distinct $z,z_{i}$ sufficiently close to $x$. Now let $z\in]x,y[$
be close enough to $x$ so that $D_{\xi\xi}^{2}\rho(z)<-3C$, which
is possible due to (\ref{eq: D2 rho to - infty}). Then let $z_{i}=z+\varepsilon_{i}\xi$
be close enough to $z$ so that we have $D_{\xi\xi}^{2}\rho(w_{i})<-2C$,
which is possible due to the continuity of $D^{2}\rho$ on $V$. But
this is in contradiction with (\ref{eq: upper bd on D2u}) and (\ref{eq: lower bd on D2u}).
\end{proof}
We do not use the next proposition directly in the proof of Theorem
\ref{thm: Reg dbl obstcl}, however, it completes our understanding
of the relation between double obstacle problems and gradient constraints.
The proposition says that $u$ hits the gradient constraint, i.e.
$H(Du)=0$, exactly when it hits one of the obstacles $-\bar{\rho},\rho$.%

\begin{prop}
\label{prop: E , P}Let $u\in C^{1}(\overline{U})\cap W_{\mathrm{loc}}^{2,n}(U)\cap W_{\bar{\rho},\rho}(U)$
be a solution of the double obstacle problem (\ref{eq: dbl obstcl}).
In addition suppose that $\gamma$ is strictly convex. Then we have
\[
P=\{x\in U:H(Du(x))=0\},\qquad E=\{x\in U:H(Du(x))<0\}.
\]
\end{prop}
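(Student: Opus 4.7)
The plan is to verify the two inclusions $P \subseteq \{H(Du) = 0\}$ and $E \subseteq \{H(Du) < 0\}$. Since $\gamma^\circ(Du) \le 1$ in $U$ by the preceding lemma, these amount to showing $\gamma^\circ(Du) = 1$ on $P$ and $\gamma^\circ(Du) < 1$ on $E$.

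For the first inclusion I take $x \in P^+$ (the $P^-$ case being symmetric). First I produce a $\rho$-closest point $y \in \partial U$ satisfying $[x, y[ \subset U$: if $y_0$ is any $\rho$-closest point to $x$, let $y$ be the first intersection of $[x, y_0]$ with $\partial U$; the additivity $\gamma(x - y_0) = \gamma(x - y) + \gamma(y - y_0)$ along the ray combined with the upper bound in (\ref{eq: phi Lip}) forces $y$ to remain $\rho$-closest, and $[x, y[ \subset U$ by the choice of $y$. Lemma \ref{lem: segment is plastic} then gives $[x, y[ \subset P^+$, so $u \equiv \rho$ on $[x, y]$ by continuity. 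Because $\rho$ varies linearly on this segment (Lemma \ref{lem: segment to the closest pt}(a)), the unit vector $\eta := (x - y)/\gamma(x - y)$ satisfies $\gamma(\eta) = 1$ and $D_\eta u(x) = 1$, and (\ref{eq: gen Cauchy-Schwartz 2}) gives $\gamma^\circ(Du(x)) \ge 1$, hence $= 1$.

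For the reverse inclusion I argue by contradiction: assume $x_0 \in E$ with $\gamma^\circ(Du(x_0)) = 1$. Strict convexity of $\gamma$ makes $\gamma^\circ$ differentiable off the origin (Subsection \ref{subsec: Reg gaug}), so I set $\eta := D\gamma^\circ(Du(x_0))$; the identities (\ref{eq: g0 (Dg)=00003D1}) and (\ref{eq: Euler formula}) give $\gamma(\eta) = 1$ and $\langle Du(x_0), \eta\rangle = 1$. On the open set $E$ we have $F[u] = 0$ a.e., and the bootstrapping argument from the proof of Theorem \ref{thm: dbl obstcl =00003D PDE grad} upgrades $u$ to $C^{3,\alpha}(E)$; differentiating in direction $\eta$ yields a linear uniformly elliptic PDE $L(D_\eta u) = 0$ in $E$ with zeroth-order coefficient $F_z \ge 0$ by Assumption \ref{assu: F}(c). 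Because $D_\eta u \le \gamma^\circ(Du)\gamma(\eta) \le 1$ throughout $U$ and $D_\eta u(x_0) = 1$, the function $D_\eta u - 1$ is a non-positive subsolution of $L$ attaining its maximum $0$ at the interior point $x_0$, so the strong maximum principle forces $D_\eta u \equiv 1$ on the connected component $E_0$ of $E$ containing $x_0$. Extending along the line $\ell(t) := x_0 + t\eta$ while it stays in $E_0$ produces exit points $x_1 = \ell(t^*)$ and $x_{-1} = \ell(-t_*^-)$ in $\partial E_0 \subseteq (\partial E \cap U) \cup \partial U$, with $u(\ell(t)) = u(x_0) + t$ throughout $[-t_*^-, t^*]$.

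It remains to rule out each configuration for $x_1, x_{-1}$. When either exit lies in $P^\pm$, strict convexity of $\gamma$ together with the first inclusion forces the $\rho$- or $\bar\rho$-closest direction at that exit to equal $D\gamma^\circ(Du(x_{\pm 1})) = \eta$. In the subcases where this direction, continued from the exit, passes back through $x_0$ (namely $x_1 \in P^+$ or $x_{-1} \in P^-$), the first-exit replacement used above lets me apply Lemma \ref{lem: segment is plastic} on the entire segment between the two exits, forcing $x_0 \in P$ and contradicting $x_0 \in E$; in the opposite subcases ($x_{-1} \in P^+$ or $x_1 \in P^-$), combining $u(\ell(t)) = u(x_0) + t$ with the Lipschitz bound (\ref{eq: rho Lip}) for $\rho, \bar\rho$ or with the definition (\ref{eq: rho}) tested at a suitable boundary point again forces $u(x_0) = \rho(x_0)$ or $u(x_0) = -\bar\rho(x_0)$. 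I expect the main obstacle to be the last configuration, where both $x_1, x_{-1} \in \partial U$: integrating $D_\eta u \equiv 1$ and using $u = \varphi$ on $\partial U$ yields $\varphi(x_1) - \varphi(x_{-1}) = \gamma(x_1 - x_{-1})$, saturating (\ref{eq: phi Lip}); summing the individual Lipschitz inequalities through $x_0$ forces $\varphi(x_0) = \varphi(x_{-1}) + t_*^-$, so $u(x_0) = \varphi(x_0)$, and testing (\ref{eq: rho}) at $y = x_{-1}$ then gives
\[
\rho(x_0) \le \gamma(x_0 - x_{-1}) + \varphi(x_{-1}) = t_*^- + \varphi(x_{-1}) = \varphi(x_0) = u(x_0),
\]
which combined with $u \le \rho$ yields $u(x_0) = \rho(x_0)$, again contradicting $x_0 \in E$.
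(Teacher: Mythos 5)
Your proof is correct and takes essentially the same approach as the paper: in the first inclusion you exploit a segment from a coincidence point to a suitable closest boundary point (via Lemma \ref{lem: segment is plastic}) to saturate the gradient constraint, and in the reverse inclusion you differentiate $F[u]=0$, apply the strong maximum principle to $D_\eta u$ on a component of $E$, and analyze exit points of the line $\ell$. The only meaningful deviation is that you trace $\ell$ in both directions and handle all configurations of the two exits $x_1, x_{-1}$, including the "both on $\partial U$" case via saturation of (\ref{eq: phi Lip}), whereas the paper observes that the \emph{backward} exit alone already produces a contradiction in each of its three subcases ($y \in \partial U$, $y \in P^+$, $y \in P^-$), making the second exit unnecessary. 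A second minor difference: you set $\eta := D\gamma^\circ(Du(x_0))$, which invokes differentiability of $\gamma^\circ$ (true under strict convexity of $\gamma$, but requiring an extra fact from convex geometry), while the paper avoids this by directly extracting a maximizer $\tilde\xi$ from (\ref{eq: gen Cauchy-Schwartz 2}). Both routes are sound; the paper's is a bit leaner.
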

\begin{proof}
First suppose $x\in P^{-}$; the case of $P^{+}$ is similar. Then
we have 
\[
u(x)=-\bar{\rho}(x)=-\gamma(y-x)+\varphi(y)
\]
for some $y\in\partial U$. Note that the set of $y$'s for which
this happens is closed; hence it is compact, since $U$ is bounded.
Let $y$ be a $\bar{\rho}$-closest point on $\partial U$ to $x$
which has the least Euclidean distance to $x$ among its $\bar{\rho}$-closest
points. Then we must have $[x,y[\subset U$. Since otherwise $]x,y[$
would intersect $\partial U$ in a point $z$, and $z$ would be a
$\bar{\rho}$-closest point to $x$ which is closer to $x$ than $y$
(in the Euclidean distance). Because by the Lipschitz property (\ref{eq: phi Lip})
for $\varphi$ and the collinearity of $x,z,y$ we would have 
\[
\gamma(z-x)-\varphi(z)\le\gamma(z-x)+\gamma(y-z)-\varphi(y)=\gamma(y-x)-\varphi(y).
\]
Thus $[x,y[\subset U$, and by Lemma \ref{lem: segment is plastic},
$u=-\bar{\rho}$ along the segment $[x,y[$. We also know that $\bar{\rho}$
varies linearly along the segment $[x,y[$, since $y$ is a $\bar{\rho}$-closest
point to the points of the segment. Hence we have $D_{\xi}u(x)=1$
for $\xi:=\frac{y-x}{\gamma(y-x)}$. Therefore $\gamma^{\circ}(Du(x))$
cannot be less than $1$ due to the equation (\ref{eq: gen Cauchy-Schwartz 2}).
Thus $H(Du(x))=\gamma^{\circ}(Du(x))-1=0$.

Conversely, assume that $H(Du(x))=0$. Then $\gamma^{\circ}(Du(x))=1$.
Hence by (\ref{eq: gen Cauchy-Schwartz 2}) there is $\tilde{\xi}$
with $\gamma(\tilde{\xi})=1$ such that $D_{\tilde{\xi}}u(x)=1$.
Suppose to the contrary that $x\in E$, i.e. $-\bar{\rho}(x)<u(x)<\rho(x)$.
As shown in the poof of Theorem \ref{thm: dbl obstcl =00003D PDE grad},
we know that $D_{\tilde{\xi}}u$ is $C^{2,\alpha}$ in $E$ and satisfies
the elliptic equation 
\[
F_{z}[u]D_{\tilde{\xi}}u+F_{p_{i}}[u]D_{i}D_{\tilde{\xi}}u+F_{M_{ij}}[u]D_{ij}^{2}D_{\tilde{\xi}}u=0.
\]
On the other hand on $\overline{U}$ we have 
\[
D_{\tilde{\xi}}u=\langle Du,\tilde{\xi}\rangle\le\gamma^{\circ}(Du)\gamma(\tilde{\xi})\le1.
\]
Let $E_{1}$ be the connected component of $E$ that contains $x$.
Then the strong maximum principle implies that $D_{\tilde{\xi}}u\equiv1$
over $E_{1}$.

Now consider the line passing through $x$ in the $\tilde{\xi}$ direction,
and suppose it intersects $\partial E_{1}$ for the first time in
$y:=x-\tau\tilde{\xi}$ for some $\tau>0$. If $y\in\partial U$,
then for $t>0$ we have 
\[
\frac{d}{dt}[u(y+t\tilde{\xi})]=D_{\tilde{\xi}}u(y+t\tilde{\xi})=1=\frac{d}{dt}[t\gamma(\tilde{\xi})]=\frac{d}{dt}[\gamma(y+t\tilde{\xi}-y)].
\]
Thus as $u(y)=\varphi(y)$ we get $u(x)=u(y+\tau\tilde{\xi})=\gamma(x-y)+\varphi(y)\ge\rho(x)$,
which is a contradiction. Now if $y\in U$, then as it also belongs
to $\partial E$ we have $y\in P$. If $u(y)=\rho(y)=\gamma(y-\tilde{y})+\varphi(\tilde{y})$
for some $\tilde{y}\in\partial U$, similarly to the above we obtain
\begin{align*}
u(x) & =\gamma(x-y)+u(y)\\
 & =\gamma(x-y)+\gamma(y-\tilde{y})+\varphi(\tilde{y})\ge\gamma(x-\tilde{y})+\varphi(\tilde{y})\ge\rho(x),
\end{align*}
which is again a contradiction.

On the other hand, suppose $u(y)=-\bar{\rho}(y)=-\gamma(\tilde{y}-y)+\varphi(\tilde{y})$
for some $\tilde{y}\in\partial U$. Let $\tilde{y}$ be the closest
point (in the Euclidean distance) to $y$ that satisfies this equation.
Then as we showed in the beginning of this proof we have $[y,\tilde{y}[\subset U$.
Hence by Lemma \ref{lem: segment is plastic} we have $u=-\bar{\rho}$
on the segment $[y,\tilde{y}[$; and consequently $D_{\hat{\xi}}u(y)=1$,
where $\hat{\xi}:=\frac{\tilde{y}-y}{\gamma(\tilde{y}-y)}$. On the
other hand we also have $D_{\tilde{\xi}}u(y)=1$, because $Du$ is
continuous on $\overline{U}$. Now since $\gamma$ is strictly convex,
and $\gamma^{\circ}(Du)\le1$, we must have $\tilde{\xi}=\hat{\xi}$.
Therefore $x,y,\tilde{y}$ are collinear, and $x,\tilde{y}$ are on
the same side of $y$. But $\tilde{y}$ cannot belong to $]y,x[\subset E_{1}\subset E\subset U$.
Hence we must have $x\in]y,\tilde{y}[\subset P^{-}$, which means
$u(x)=-\bar{\rho}(x)$; and this is a contradiction.
\end{proof}

Before presenting the proof of Theorem \ref{thm: Reg dbl obstcl},
let us note that we can weaken the requirement that $\gamma^{\circ}(D\varphi)<1$,
and under suitable conditions allow $\gamma^{\circ}(D\varphi)$ to
be equal to $1$ at some points. First let us review some well-known
facts from convex analysis. Consider a compact convex set $K$. Let
$x\in\partial K$ and $\mathrm{v}\in\R^{n}-\{0\}$. We say the hyperplane
\begin{equation}
\Gamma_{x,\mathrm{v}}:=\{y\in\R^{n}:\langle y-x,\mathrm{v}\rangle=0\}\label{eq: hyperplane}
\end{equation}
is a \textit{supporting hyperplane} of $K$ at $x$ if $K\subset\{y:\langle y-x,\mathrm{v}\rangle\le0\}$.
In this case we say $\mathrm{v}$ is an \textit{outer normal vector}
of $K$ at $x$. The \textit{normal cone} of $K$ at $x$ is the closed
convex cone 
\begin{equation}
N(K,x):=\{0\}\cup\{\mathrm{v}\in\mathbb{R}^{n}-\{0\}:\mathrm{v}\textrm{ is an outer normal vector of }K\textrm{ at }x\}.\label{eq: normal cone}
\end{equation}
It is easy to see that when $\partial K$ is $C^{1}$ we have 
\[
N(K,x)=\{tD\gamma(x):t\ge0\}.
\]
For more details see {[}\citealp{MR3155183}, Sections 1.3 and 2.2{]}.
Now we can state the more relaxed condition which can be used instead
of $\gamma^{\circ}(D\varphi)<1$:\medskip{}

\begin{enumerate}
\item[\upshape{($\ast$)}] \label{enu: cond *}$\gamma^{\circ}(D\varphi)\le1$ everywhere; and
if for some $y\in\partial U$ we have $\gamma^{\circ}(D\varphi(y))=1$
then we must have 
\[
\langle\mathrm{v},\nu(y)\rangle\ne0
\]
for every nonzero $\mathrm{v}\in N(K^{\circ},D\varphi(y))$.%
\medskip{}
\end{enumerate}
Let us further elaborate on the above condition and present a geometric
interpretation for it. As we have seen in Subsection \ref{subsec: Reg Opstacles},
there is $\lambda>0$ such that $\mu:=D\varphi+\lambda\nu$ satisfies
$\gamma^{\circ}(\mu)=1$. In addition, for a point $y\in\partial U$,
$D\gamma^{\circ}(\mu)$ is the direction along which lie the points
in $U$ that have $y$ as their $\rho$-closest point. Note that
we also have $D\gamma^{\circ}(\mu)\in N(K^{\circ},\mu)$. Now when
$\gamma^{\circ}(D\varphi)=1$, $D\varphi$ plays the role of $\mu$.
And $\mathrm{v}\in N(K^{\circ},D\varphi)$ plays the role of $D\gamma^{\circ}(\mu)$.
Hence we need to impose the condition $(\ast)$ in order to be sure
that there is a direction along which we can enter $U$ and hit the
points whose $\rho$-closest point is $y$.

In addition, note that the condition $(\ast)$ also holds when we
replace $K,\varphi,K^{\circ}$ by $-K,-\varphi$ and $(-K)^{\circ}=-K^{\circ}$.
In particular notice that if $D\varphi\in\partial K^{\circ}$, i.e.
if $\gamma^{\circ}(D\varphi(y))=1$, then we have $-D\varphi\in-\partial K^{\circ}=\partial(-K^{\circ})$;
and vice versa. In addition, it is easy to see that 
\[
\mathrm{v}\in N(K^{\circ},D\varphi(y))\iff-\mathrm{v}\in N(-K^{\circ},-D\varphi(y)).
\]
So as a result, $\rho,\bar{\rho}$ will have the same properties.%

\begin{proof}[\textbf{Proof of Theorem \ref{thm: Reg dbl obstcl}}]
\label{proof Thm 3} As it is well known, a compact convex set with
nonempty interior can be approximated, in the Hausdorff metric, by
a shrinking sequence of compact convex sets with nonempty interior
which have smooth boundaries with positive curvature (see for example
\citep{schmuckenschlaeger1993simple}). We apply this result to $K^{\circ}$.
Thus there is a sequence $K_{k}^{\circ}$ of compact convex sets,
that have smooth boundaries with positive curvature, and 
\begin{eqnarray*}
K_{k+1}^{\circ}\subset\mathrm{int}(K_{k}^{\circ}), & \qquad & K^{\circ}={\textstyle \bigcap}K_{k}^{\circ}.
\end{eqnarray*}
Notice that we can take the approximations of $K^{\circ}$ to be
the polar of other convex sets, because the double polar of a compact
convex set with $0$ in its interior is itself. Also note that $K_{k}$'s
are strictly convex compact sets with $0$ in their interior, which
have smooth boundaries with positive curvature. Furthermore we have
$K=(K^{\circ})^{\circ}\supset K_{k+1}\supset K_{k}$. For the proof
of these facts see {[}\citealp{MR3155183}, Sections 1.6, 1.7 and
2.5{]}.

To simplify the notation we use $\gamma_{k},\gamma_{k}^{\circ},\rho_{k},\bar{\rho}_{k}$
instead of $\gamma_{K_{k}},\gamma_{K_{k}^{\circ}},\rho_{K_{k},\varphi},\bar{\rho}_{K_{k},\varphi}$,
respectively. Also, let $R_{k}$ be the $\rho_{k}$-ridge, and let
$E_{k},P_{k}^{\pm}$ be the non-coincidence and coincidence sets of
$u_{k}$. Note that $K_{k},U,\varphi$ satisfy the Assumption \ref{assu: K,U}.
In particular we have $\gamma_{k}^{\circ}(D\varphi)<1$, since $D\varphi\in K^{\circ}\subset\mathrm{int}(K_{k}^{\circ})$.
Hence as we have shown in \citep{SAFDARI202176}, $\rho_{k},\bar{\rho}_{k}$
satisfy Assumption \ref{assu: =00005Cpsi +-} (stated in the appendix).
In addition, they are $C^{2,\alpha}$ on a neighborhood of $\partial U$.
Thus by Theorem \ref{thm: Reg u} in the appendix, there is $u_{k}\in W_{\bar{\rho}_{k},\rho_{k}}(U)\cap W^{2,\infty}(U)$
that satisfies the double obstacle problem 
\[
\begin{cases}
F[u_{k}]=0 & \textrm{ a.e. in }\{-\bar{\rho}_{k}<u_{k}<\rho_{k}\},\\
F[u_{k}]\le0 & \textrm{ a.e. on }\{u_{k}=\rho_{k}\},\\
F[u_{k}]\ge0 & \textrm{ a.e. on }\{u_{k}=-\bar{\rho}_{k}\}.
\end{cases}
\]
Therefore the lemmas and propositions of this section, especially
Proposition \ref{prop: ridge is elastic}, hold for each $u_{k}$.
(This is our only use of the assumptions that $F$ is $C^{2}$ and
does not depend on $x$. In the rest of the proof we do not use these
assumptions directly.) Also we know that 
\begin{equation}
-\bar{\rho}_{1}\le-\bar{\rho}_{k}\le u_{k}\le\rho_{k}\le\rho_{1}.\label{eq: u_k bdd}
\end{equation}
Note that $\rho_{k}\le\rho_{1}$ and $\bar{\rho}_{k}\le\bar{\rho}_{1}$,
since $\gamma_{k}\le\gamma_{1}$ due to $K_{k}\supset K_{1}$. 

We divide the rest of this proof into four parts. In Part I we derive
the uniform bound (\ref{eq: F=00005Bu=00005D bdd}), i.e. we show
that $F[u_{k}]$ is bounded independently of $k$. This is possible
mainly for two reasons. First we will use the facts that $D^{2}\rho_{k}$
attains its maximum on $\partial U$, and $u_{k}$ does not touch
$\rho_{k}$ at its singularities; so we get a one-way bound for $F[u_{k}]$
(and also for $D^{2}u_{k}$). For the other bound, we use the fact
that $u_{k}$ is a subsolution or a supersolution of $F=0$ over $P_{k}^{\pm}$.
Then in Part II we show that a subsequence of $u_{k}$ converges to
a function $u$ in $W^{2,p}$, which is a solution of the double obstacle
problem (\ref{eq: dbl obstcl}). Here we use the bound (\ref{eq: F=00005Bu=00005D bdd}),
obtained in Part I, to show that the $W^{2,p}$ norm of $u_{k}$ is
uniformly bounded. Then we extract a weakly convergent subsequence
of $u_{k}$, and show that its limit is a viscosity solution, and
hence a strong solution, of (\ref{eq: dbl obstcl}).

Next in Part III we show that $u$ is in $W_{\mathrm{loc}}^{2,\infty}$.
Here we use the one-sided bound for $D^{2}u_{k}$, and boundedness
of $F[u_{k}]$ (both obtained in Part I), to show that $D^{2}u_{k}$
is uniformly bounded on $P_{k}$. Then it is shown that we can apply
the result of \citet{MR3198649} (generalized by \citet{Indrei-Minne})
to conclude that $D^{2}u_{k}$ is locally uniformly bounded, and to
show that in fact $u_{k}$ converges weakly star in $W_{\mathrm{loc}}^{2,\infty}$
to $u$. Finally in Part IV we prove that the regularity of $u$ holds
up to the boundary. Here we straighten the boundary, and similarly
to Part III we show that the convergence to $u$ is actually weakly
star in $W^{2,\infty}$.

\medskip{}

PART I:\medskip{}

Let us show that 
\begin{equation}
\|F[u_{k}]\|_{L^{\infty}(U)}=\|F(u_{k},Du_{k},D^{2}u_{k})\|_{L^{\infty}(U)}\le C\label{eq: F=00005Bu=00005D bdd}
\end{equation}
for some $C$ independent of $k$. To see this note that on $E_{k}$
we have $F[u_{k}]=0$. So the desired bound trivially holds on $E_{k}$.
Next consider $P_{k}^{+}$. We have 
\[
F[u_{k}]\le0\qquad\textrm{ a.e. on }P_{k}^{+}.
\]
Thus we have an upper bound for $F[u_{k}]$ on $P_{k}^{+}$, independently
of $k$.

On the other hand, since $P_{k}^{+}$ does not intersect $R_{k}$
due to Proposition \ref{prop: ridge is elastic}, $\rho_{k}$ is at
least $C^{2}$ on $P_{k}^{+}$.%
{} Now as $u_{k}=\rho_{k}$ on $P_{k}^{+}$,  for a.e. $x\in P_{k}^{+}$
we have $Du_{k}(x)=D\rho_{k}(x)$ and $D^{2}u_{k}(x)=D^{2}\rho_{k}(x)$.
On the other hand, by Lemma \ref{lem: D2 rho decreas} we know that
$D^{2}\rho_{k}(x)\le D^{2}\rho_{k}(y)$, where $y$ is the $\rho_{k}$-closest
point on $\partial U$ to $x$. Hence by the ellipticity of $F$ we
have 
\begin{align*}
F(u_{k}(x),Du_{k}(x),D^{2}u_{k}(x)) & =F(\rho_{k}(x),D\rho_{k}(x),D^{2}\rho_{k}(x))\\
 & \ge F(\rho_{k}(x),D\rho_{k}(x),D^{2}\rho_{k}(y)).
\end{align*}
Now note that $\rho_{k}$ is uniformly bounded due to (\ref{eq: u_k bdd}),
and $D\rho_{k}$ is uniformly bounded since $D\rho_{k}\in K_{k}^{\circ}\subset K_{1}^{\circ}$.
Thus in order to show that $F[u_{k}]$ has a uniform lower bound on
$P_{k}^{+}$, we only need to show that $D^{2}\rho_{k}$ is bounded
on $\partial U$ independently of $k$. This has been proved in the
proof of Theorem 5%
{} of \citep{SAFDARI202176}. Here, the condition $\gamma^{\circ}(D\varphi)<1$,
or the more relaxed condition $(\ast)$ on page \pageref{enu: cond *},
is needed. Similarly, we can show that $F[u_{k}]$ is bounded on $P_{k}^{-}$,
independently of $k$. Hence we obtain the desired bound (\ref{eq: F=00005Bu=00005D bdd}).\medskip{}

PART II:\medskip{}

Now let $f_{k}:=F(u_{k},Du_{k},D^{2}u_{k})$. Then $u_{k}$ is a strong
solution to the fully nonlinear elliptic equation 
\[
F(u_{k},Du_{k},D^{2}u_{k})=f_{k},\qquad\qquad u_{k}|_{\partial U}=\varphi.
\]
Thus by $W^{2,p}$ estimates for fully nonlinear elliptic equations
(see for example Theorem 4.5 of \citep{winter2009w2}) we have 
\begin{equation}
\|u_{k}\|_{W^{2,p}(U)}\le C\big(\|f_{k}\|_{L^{p}(U)}+\|\varphi\|_{W^{2,\infty}(U)}+\|u_{k}\|_{L^{\infty}(U)}\big)\label{eq: u k in W2,p}
\end{equation}
for some constant $C$ independent of $k$.

Therefore $u_{k}$ is a bounded sequence in $W^{2,p}(U)$ due to (\ref{eq: F=00005Bu=00005D bdd})
and (\ref{eq: u_k bdd}). Consequently for every $\tilde{\alpha}<1$,
$\|u_{k}\|_{C^{1,\tilde{\alpha}}(\overline{U})}$ is bounded independently
of $k$, because $\partial U$ is $C^{2}$. Hence there is a subsequence
of $u_{k}$, which we still denote by $u_{k}$, that is strongly convergent
in $C^{1}(\overline{U})$, and weakly convergent in $W^{2,p}(U)$.
We call the limit $u$. Note that $u$ belongs to $W^{2,p}(U)$ for
every $p<\infty$. Furthermore we have $u\in W_{\bar{\rho},\rho}$
because of (\ref{eq: u_k bdd}), and the fact that $\rho_{k},\bar{\rho}_{k}$
uniformly converge to $\rho,\bar{\rho}$ respectively%
. Now note that $u_{k}$ is a strong solution of the equation 
\[
\max\{\min\{F[u_{k}],u_{k}+\bar{\rho}_{k}\},u_{k}-\rho_{k}\}=0.
\]
Hence $u_{k}$ is also a viscosity solution of the above equation
(see \citep{lions1983remark}). Therefore $u$ is a viscosity solution
of the equation 
\begin{equation}
\max\{\min\{F[u],u+\bar{\rho}\},u-\rho\}=0,\label{eq: eq instd of dbl obstcl}
\end{equation}
due to the stability of viscosity solutions (see \citep{crandall1992user}).

Let us show that $u$ is also a strong solution of the equation (\ref{eq: eq instd of dbl obstcl}).
We know that for a.e. $x_{0}\in U$ we have 
\[
u(x_{0}+h)=u(x_{0})+\langle Du(x_{0}),h\rangle+\frac{1}{2}\langle D^{2}u(x_{0})h,h\rangle+o(|h|^{2})
\]
for small $h\in\mathbb{R}^{n}$ (see for example Proposition 2.2 of
\citep{caffarelli1996viscosity}). Now let 
\[
\phi(h)=u(x_{0})+\langle Du(x_{0}),h\rangle+\frac{1}{2}\langle(D^{2}u(x_{0})+\varepsilon I)h,h\rangle
\]
for some $\varepsilon>0$. Then $\phi$ is a $C^{2}$ function and
$u-\phi$ has a local maximum at $x_{0}\in U$. Hence at $x_{0}$
we must have 
\[
\max\{\min\{F(u,D\phi,D^{2}\phi),u+\bar{\rho}\},u-\rho\}\le0.
\]
Thus at $x_{0}$ we have 
\[
\max\{\min\{F(u,Du,D^{2}u+\varepsilon I),u+\bar{\rho}\},u-\rho\}\le0.
\]
Therefore by sending $\varepsilon\to0$ we get $\max\{\min\{F[u],u+\bar{\rho}\},u-\rho\}\le0$
due to the continuity of $F$. Similarly we can show that $\max\{\min\{F[u],u+\bar{\rho}\},u-\rho\}\ge0$.
Thus $u$ is a strong solution of (\ref{eq: eq instd of dbl obstcl})
as desired. However, this means that $u$ satisfies the double obstacle
problem (\ref{eq: dbl obstcl}).

\medskip{}

PART III:\medskip{}

Next let us show that $u$ belongs to $W_{\mathrm{loc}}^{2,\infty}$.
First we need to prove that $D^{2}u_{k}$ is bounded on $P_{k}$ independently
of $k$. To see this consider $P_{k}^{+}$; the other case is similar.
We know that for a.e. $x\in P_{k}^{+}$ we have $D^{2}u_{k}(x)=D^{2}\rho_{k}(x)$
due to Proposition \ref{prop: ridge is elastic}. Also, as we mentioned
in Part I of the proof, $D^{2}\rho_{k}$ is bounded on $\partial U$
independently of $k$. Hence by Lemma \ref{lem: D2 rho decreas},
when $y$ is the $\rho_{k}$-closest point on $\partial U$ to $x\in P_{k}^{+}$
we have 
\begin{equation}
D^{2}u_{k}(x)=D^{2}\rho_{k}(x)\le D^{2}\rho_{k}(y)\le\tilde{C}I\label{eq: D2 uk is bdd above}
\end{equation}
for some $\tilde{C}$ independent of $k$. Thus $\tilde{C}I-D^{2}u_{k}\ge0$
a.e. on $P_{k}^{+}$. Therefore by the uniform ellipticity of $F$
we have 
\begin{align*}
-\Lambda\,\mathrm{tr}(\tilde{C}I-D^{2}u_{k})\le F(u_{k},Du_{k},D^{2}u_{k}+\tilde{C}I-D^{2}u_{k}) & -F(u_{k},Du_{k},D^{2}u_{k})\\
 & \qquad\le-\lambda\,\mathrm{tr}(\tilde{C}I-D^{2}u_{k}).
\end{align*}
However, we know that $F(u_{k},Du_{k},D^{2}u_{k})$ is uniformly bounded
due to (\ref{eq: F=00005Bu=00005D bdd}), and \linebreak{}
$F(u_{k},Du_{k},\tilde{C}I)$ is bounded due to the uniform boundedness
of $u_{k},Du_{k}$ (remember that $u_{k}$ is strongly convergent
in $C^{1}$). Therefore $\mathrm{tr}(\tilde{C}I-D^{2}u_{k})=n\tilde{C}-\Delta u_{k}$
is uniformly bounded. Now let $\xi,\xi_{1},\cdots,\xi_{n-1}$ be an
orthonormal basis of $\mathbb{R}^{n}$. Then by (\ref{eq: D2 uk is bdd above})
we have 
\[
D_{\xi\xi}^{2}u_{k}=\Delta u_{k}-\sum_{j\le n-1}D_{\xi_{j}\xi_{j}}^{2}u_{k}\ge\Delta u_{k}-(n-1)\tilde{C}.
\]
Hence $D^{2}u_{k}$ is also bounded below on $P_{k}^{+}$ independently
of $k$. The case of $P_{k}^{-}$ can be treated similarly.

Now let $x_{0}\in U$, and suppose that $B_{r}(x_{0})\subset U$.
Set $v_{k}(y):=u_{k}(x_{0}+ry)$ for $y\in B_{1}(0)$. Let 
\[
\tilde{F}(z,p,M):=F(z,\tfrac{1}{r}p,\tfrac{1}{r^{2}}M)-F(z,\tfrac{1}{r}p,0).
\]
Then by (\ref{eq: dbl obstcl}), and the arguments of the above paragraph,
we have 
\[
\begin{cases}
\tilde{F}(v_{k},Dv_{k},D^{2}v_{k})=\tilde{f}_{k} & \textrm{ a.e. in }B_{1}(0)\cap\Omega_{k},\\
|D^{2}v_{k}|\le C & \textrm{ a.e. in }B_{1}(0)-\Omega_{k},
\end{cases}
\]
for some $C$ independent of $k$. Here $\Omega_{k}:=\{y\in B_{1}(0):u_{k}(x_{0}+ry)\in E_{k}\}$,
and 
\[
\tilde{f}_{k}:=-F(v_{k},\tfrac{1}{r}Dv_{k},0).
\]
 Next recall that $\|u_{k}\|_{W^{2,n}(B_{r}(x_{0}))}$ is bounded
independently of $k$ due to (\ref{eq: u k in W2,p}),(\ref{eq: F=00005Bu=00005D bdd}).
Therefore $\|v_{k}\|_{W^{2,n}(B_{1}(0))}$ is bounded independently
of $k$ too. Also note that $\|\tilde{f}_{k}\|_{C^{\tilde{\alpha}}(B_{1}(0))}$
are bounded independently of $k$, since $\|u_{k}\|_{C^{1,\tilde{\alpha}}(\overline{U})}$
is bounded independently of $k$. Thus we can apply the result of
\citep{Indrei-Minne} to deduce that 
\[
|D^{2}v_{k}|\le\bar{C}\qquad\textrm{ a.e. in }B_{\frac{1}{2}}(0)
\]
for some $\bar{C}$ independent of $k$. Therefore 
\[
|D^{2}u_{k}|\le\tilde{C}\qquad\textrm{ a.e. in }B_{\frac{r}{2}}(x_{0})
\]
for some $\tilde{C}$ independent of $k$. Hence $u_{k}$ is a bounded
sequence in $W^{2,\infty}(B_{\frac{r}{2}}(x_{0}))$. Therefore a subsequence
of them converges weakly star in $W^{2,\infty}(B_{\frac{r}{2}}(x_{0}))$.
But the limit must be $u$; so we get $u\in W^{2,\infty}(B_{\frac{r}{2}}(x_{0}))$,
as desired.

\medskip{}

PART IV:\medskip{}

Finally let us show that $u$ belongs to $W^{2,\infty}(U)$. Let $x_{0}\in\partial U$.
Let $\Phi$ be a $C^{2,\alpha}$ change of coordinates on a neighborhood
of $x_{0}$ that flattens $\partial U$ around $x_{0}$. More specifically,
we assume that $\Phi:x\mapsto y$ maps a neighborhood of $x_{0}$
onto a neighborhood of $0$ that contains $\overline{B}_{1}(0)$,
and the $\Phi$-image of $U,\partial U$ lie respectively in the half-space
$\{y_{n}>0\}$ and on the plane $\{y_{n}=0\}$. Let $\Psi$ be the
inverse of $\Phi$. Then we have $y=\Phi(x)$ and $x=\Psi(y)$. Let
$B_{1}^{+}:=B_{1}(0)\cap\{y_{n}>0\}$ and $B_{1}':=B_{1}(0)\cap\{y_{n}=0\}$.
Now set 
\[
\hat{u}_{k}(y):=u_{k}(\Psi(y))-\varphi(\Psi(y))=u_{k}(x)-\varphi(x).
\]
It is obvious that $\hat{u}_{k}=0$ on $B_{1}'$. We also have $\hat{u}_{k}\in W^{2,n}(B_{1}^{+})\cap C^{1}(\overline{B}_{1}^{+})$
(see {[}\citealp{MR1814364}, Section 7.3{]}). In addition we have
\begin{align}
D\hat{u}_{k}(y) & =(Du_{k}(x)-D\varphi(x))D\Psi(y),\nonumber \\
D^{2}\hat{u}_{k}(y) & =(D^{2}u_{k}(x)-D^{2}\varphi(x))D\Psi(y)D\Psi(y)+(Du_{k}(x)-D\varphi(x))D^{2}\Psi(y).\label{eq: D u hat}
\end{align}
Therefore we get 
\[
\|\hat{u}_{k}\|_{W^{2,n}(B_{1}^{+})}\le C\big(\|u_{k}\|_{W^{2,n}(U)}+\|\varphi\|_{W^{2,\infty}(U)}\big)
\]
for some $C$ independent of $k$. Hence $\|\hat{u}_{k}\|_{W^{2,n}(B_{1}^{+})}$
is bounded independently of $k$, due to (\ref{eq: u k in W2,p}),(\ref{eq: F=00005Bu=00005D bdd}).

Now let 
\begin{align*}
\hat{F}(y,z,p,M) & :=F(z+\varphi,\;pD\Phi+D\varphi,\;MD\Phi D\Phi+D^{2}\varphi+pD^{2}\Phi)\\
 & \qquad\qquad\qquad\quad-F(z+\varphi,\;pD\Phi+D\varphi,\;D^{2}\varphi+pD^{2}\Phi),
\end{align*}
where $\varphi,\Phi$ are computed at $x=\Psi(y)$. Note that by differentiating
the equality $\Psi\circ\Phi=\mathrm{id}$ we get $D\Psi D\Phi=I$,
and $D\Psi D^{2}\Phi D\Psi+D^{2}\Psi D\Phi=0$. Hence by (\ref{eq: D u hat})
we can easily check that 
\begin{equation}
\hat{F}[\hat{u}_{k}]=F[u_{k}]-F(u_{k},Du_{k},\;D^{2}\varphi-(Du_{k}-D\varphi)D\Psi D^{2}\Phi).\label{eq: F hat (u hat)}
\end{equation}
It is also easy to see that $\hat{F}$ is uniformly elliptic, Holder
continuous, and convex in $M$; and satisfies $\hat{F}(y,z,p,0)=0$.

Let $\Omega_{k}:=\{y\in B_{1}^{+}:\Psi(y)\in E_{k}\}$. Then $D^{2}\hat{u}_{k}$
is bounded on $B_{1}^{+}-\Omega_{k}:=\{y\in B_{1}^{+}:\Psi(y)\in P_{k}\}$
independently of $k$ due to (\ref{eq: D u hat}); because $D^{2}u_{k}$
is bounded on $P_{k}$ independently of $k$, and $Du_{k}$ is bounded
independently of $k$. Therefore by (\ref{eq: dbl obstcl}) and (\ref{eq: F hat (u hat)})
we have 
\[
\begin{cases}
\hat{F}[\hat{u}_{k}]=\hat{f}_{k} & \textrm{ a.e. in }B_{1}^{+}\cap\Omega_{k},\\
|D^{2}\hat{u}_{k}|\le C & \textrm{ a.e. in }B_{1}^{+}-\Omega_{k},\\
\hat{u}_{k}=0 & \textrm{ on }B_{1}',
\end{cases}
\]
for some $C$ independent of $k$. Here 
\[
\hat{f}_{k}:=-F(u_{k},Du_{k},\;D^{2}\varphi-(Du_{k}-D\varphi)D\Psi D^{2}\Phi).
\]
Note that $\hat{f}_{k}\in C^{\alpha_{0}}(\overline{B}_{1}^{+})$ for
some $\alpha_{0}>0$, and $\|\hat{f}_{k}\|_{C^{\alpha_{0}}(\overline{B}_{1}^{+})}$
is bounded independently of $k$; since $\|u_{k}\|_{C^{1,\tilde{\alpha}}(\overline{U})}$
is bounded independently of $k$ for every $\tilde{\alpha}<1$. Hence
as shown in \citep{indrei2016nontransversal} we get 
\[
|D^{2}\hat{u}_{k}|\le\bar{C}\qquad\textrm{ a.e. in }B_{\frac{1}{2}}(0)\cap\{y_{n}>0\}
\]
for some $\bar{C}$ independent of $k$. Thus 
\[
|D^{2}u_{k}|\le\tilde{C}\qquad\textrm{ a.e. in }B_{r}(x_{0})\cap U
\]
for some $r>0$ and some $\tilde{C}$ independent of $k$; because
we can compute the derivatives of $u_{k}$ in terms of the derivatives
of $\hat{u}_{k}$ similarly to (\ref{eq: D u hat}).

Hence $u_{k}$ is a bounded sequence in $W^{2,\infty}(B_{r}(x_{0})\cap U)$.
Therefore a subsequence of them converges weakly star in $W^{2,\infty}(B_{r}(x_{0})\cap U)$.
But the limit must be $u$; so we get $u\in W^{2,\infty}(B_{r}(x_{0})\cap U)$.
Finally note that we can cover $\partial U$ with finitely many open
balls of the form $B_{r}(x_{0})$ for $x_{0}\in\partial U$, over
which $u$ is $W^{2,\infty}$. Also, there is an open subset of $U$
whose union with these balls cover $U$, and over it $u$ is $W^{2,\infty}$
too. Thus we can conclude that $u\in W^{2,\infty}(U)$, as desired.
\end{proof}

\appendix

\section{\label{sec: Appdx}Fully Nonlinear Double Obstacle Problems}

In this appendix we are going to study the general double obstacle
problem 
\begin{equation}
\begin{cases}
F[u]=0 & \textrm{ a.e. in }\{\psi^{-}<u<\psi^{+}\},\\
F[u]\le0 & \textrm{ a.e. on }\{u=\psi^{+}\},\\
F[u]\ge0 & \textrm{ a.e. on }\{u=\psi^{-}\},
\end{cases}\label{eq: dbl obstcl 2}
\end{equation}
where $u$ belongs to 
\[
W_{\psi^{\pm}}:=\{v\in W^{1,2}(U):\psi^{-}\le v\leq\psi^{+}\textrm{ a.e.}\}.
\]
Here we allow $F$ to also depend on $x$. We also let the obstacles
to be more general than $\rho,-\bar{\rho}$, but we require their
weak second derivatives to have one-sided bounds. We show that the
solution $u$ has the optimal $W^{2,\infty}$ regularity. This result
has been used in the proof of Theorem \ref{thm: Reg dbl obstcl}.
Most of the methods employed in this section are classical and well
known, but to the best of author's knowledge the results have not
appeared elsewhere. Especially since the results are about the double
obstacle problem, and there are far fewer works on this problem compared
to the obstacle problem. Nevertheless, we include the proofs here
for completeness.%
{} First let us state our assumptions about the obstacles $\psi^{\pm}$.
\begin{assumption}
\label{assu: =00005Cpsi +-}We assume that $\psi^{\pm}:\R^{n}\to\R$
are Lipschitz functions which satisfy\medskip{}
\begin{enumerate}
\item[\upshape{(a)}] For every $x,y\in\mathbb{R}^{n}$ we have 
\[
|\psi^{\pm}(x)-\psi^{\pm}(y)|\le C_{1}|x-y|.
\]
\item[\upshape{(b)}] $\psi^{+}=\psi^{-}$ on $\partial U$, and for all $x\notin\partial U$
we have 
\begin{equation}
0<\psi^{+}(x)-\psi^{-}(x)\le2C_{1}d(x),\label{eq: psi+ - psi- < 2d}
\end{equation}
where $d$ is the Euclidean distance to $\partial U$.
\item[\upshape{(c)}] We have 
\begin{equation}
\pm\mathfrak{D}_{h,\xi}^{2}\psi^{\pm}(x):=\pm\frac{\psi^{\pm}(x+h\xi)+\psi^{\pm}(x-h\xi)-2\psi^{\pm}(x)}{h^{2}}\leq\frac{C_{2}}{d(x)-h}\label{eq: bd D2 psi}
\end{equation}
for some $C_{2}>0$, and every nonzero $x,\xi\in\mathbb{R}^{n}$ with
$|\xi|\le1$, and every $0<h<d(x)$.
\end{enumerate}
\end{assumption}

\begin{rem*}
As we have seen in \citep{SAFDARI202176}, when $\partial K$ is $C^{2}$,
and $\varphi$ satisfies the strict Lipschitz property (\ref{eq: phi strct Lip}),
then $\rho,-\bar{\rho}$ satisfy the above assumption.
\end{rem*}
Let $\eta_{\varepsilon}$ be the standard mollifier. Then we define
\begin{align}
 & \psi_{\varepsilon}^{+}(x):=(\eta_{\varepsilon}*\psi^{+})(x):=\int_{|y|\leq\varepsilon}\eta_{\varepsilon}(y)\psi^{+}(x-y)\,dy,\nonumber \\
 & \psi_{\varepsilon}^{-}(x):=(\eta_{\varepsilon}*\psi^{-})(x)+\delta_{\varepsilon},\label{eq: psi _e}
\end{align}
where $3C_{1}\varepsilon<\delta_{\varepsilon}<4C_{1}\varepsilon$
is chosen such that $\partial\{\psi_{\varepsilon}^{-}<\psi_{\varepsilon}^{+}\}$
is $C^{\infty}$, which is possible by Sard's Theorem. Note that since
$\psi^{\pm}$ are defined on all of $\mathbb{R}^{n}$, $\psi_{\varepsilon}^{\pm}$
are smooth functions on $\mathbb{R}^{n}$. Also 
\[
|\psi_{\varepsilon}^{+}(x)-\psi^{+}(x)|\leq\int_{|y|\leq\varepsilon}\eta_{\varepsilon}(y)|\psi^{+}(x-y)-\psi^{+}(x)|\,dy\leq\int_{|y|\leq\varepsilon}C_{1}|y|\eta_{\varepsilon}(y)\,dy\le C_{1}\varepsilon.
\]
Similarly we have 
\[
2C_{1}\varepsilon<\psi_{\varepsilon}^{-}-\psi^{-}<5C_{1}\varepsilon.
\]

Now, let 
\begin{equation}
U_{\varepsilon}:=\{x\in U:\psi_{\varepsilon}^{-}(x)<\psi_{\varepsilon}^{+}(x)\}.\label{eq: U_e}
\end{equation}
Then we have 
\begin{align}
\{x\in U:\psi^{+}(x)-\psi^{-}(x) & >5C_{1}\varepsilon\}\subset U_{\varepsilon}\nonumber \\
 & \subset\{x\in\overline{U}:\psi_{\varepsilon}^{-}(x)\leq\psi_{\varepsilon}^{+}(x)\}\subset\{x\in U:d(x)>\varepsilon\}.\label{eq: inclusions}
\end{align}
To see this note that $\psi_{\varepsilon}^{-}(x)\leq\psi_{\varepsilon}^{+}(x)$
implies that 
\[
3C_{1}\varepsilon<\delta_{\varepsilon}\le(\psi^{+}-\psi^{-})\ast\eta_{\varepsilon}\le\psi^{+}-\psi^{-}+C_{1}\varepsilon\le2C_{1}d(x)+C_{1}\varepsilon.
\]
Hence $d(x)>\varepsilon$. On the other hand, if $\psi_{\varepsilon}^{-}(x)\ge\psi_{\varepsilon}^{+}(x)$
then 
\[
4C_{1}\varepsilon>\delta_{\varepsilon}\ge(\psi^{+}-\psi^{-})\ast\eta_{\varepsilon}\ge\psi^{+}-\psi^{-}-C_{1}\varepsilon.
\]
Thus $\psi^{+}(x)-\psi^{-}(x)<5C_{1}\varepsilon$. Hence $\psi^{+}(x)-\psi^{-}(x)>5C_{1}\varepsilon$
implies $\psi_{\varepsilon}^{-}(x)<\psi_{\varepsilon}^{+}(x)$, as
desired.
\begin{rem*}
The above inclusions show that $\overline{U}_{\varepsilon}\subset U$,
and 
\begin{equation}
U=\bigcup_{\varepsilon>0}U_{\varepsilon};\label{eq: U=00003DU U_e}
\end{equation}
since by (\ref{eq: psi+ - psi- < 2d}) we know that $\psi^{+}-\psi^{-}>0$
on $U$. In addition, remember that we have chosen $\delta_{\varepsilon}$
so that $\partial U_{\varepsilon}$ is $C^{\infty}$. Furthermore,
for every $\varepsilon$ there is $\tilde{\varepsilon}$ such that
\begin{equation}
U_{\varepsilon}\subset\{d>\varepsilon\}\subset\{\psi^{+}-\psi^{-}>5C_{1}\tilde{\varepsilon}\}\subset U_{\tilde{\varepsilon}}.\label{eq: U_e subst U_e'}
\end{equation}
Because otherwise for every $j$ there is $x_{j}\in U$ such that
$d(x_{j})>\varepsilon$, while $\psi^{+}(x_{j})-\psi^{-}(x_{j})\le\frac{1}{j}$.
But due to the compactness we can assume that $x_{j}\to x\in\overline{U}$.
Then by continuity we must have $\psi^{+}(x)-\psi^{-}(x)=0$ and $d(x)\ge\varepsilon$.
Now by (\ref{eq: psi+ - psi- < 2d}), $\psi^{+}(x)-\psi^{-}(x)=0$
implies that $x\in\partial U$, which contradicts the fact that $d(x)\ge\varepsilon$.
\end{rem*}
\begin{lem}
\label{lem: D, D2 psi_e}Suppose that Assumption \ref{assu: =00005Cpsi +-}
holds. Then we have 
\begin{equation}
|D\psi_{\varepsilon}^{\pm}|\le C_{1}.\label{eq: bd D psi _e}
\end{equation}
Furthermore, for any unit vector $\xi$, and every $x\in U$ with
$d(x)>\varepsilon$ we have 
\begin{equation}
\pm D_{\xi\xi}^{2}\psi_{\varepsilon}^{\pm}(x)\le\frac{C_{2}}{d(x)-\varepsilon},\label{eq: bd D2 psi _e}
\end{equation}
where $d$ is the Euclidean distance to $\partial U$.
\end{lem}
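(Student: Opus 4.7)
The plan is to reduce both bounds to analogous bounds for $\psi^{\pm}$ using the fact that convolution with a translation-invariant kernel commutes with finite differences, and then pass to the limit using that $\psi_{\varepsilon}^{\pm}$ is smooth.

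For the Lipschitz bound (\ref{eq: bd D psi _e}), note that for any $h\in\mathbb{R}^{n}\setminus\{0\}$ we have
\[
\frac{\psi_{\varepsilon}^{+}(x+h)-\psi_{\varepsilon}^{+}(x)}{|h|}=\int_{|y|\le\varepsilon}\eta_{\varepsilon}(y)\,\frac{\psi^{+}(x+h-y)-\psi^{+}(x-y)}{|h|}\,dy,
\]
and the integrand is bounded by $C_{1}$ thanks to part (a) of Assumption \ref{assu: =00005Cpsi +-}. Sending $h\to0$ in the direction of any unit vector gives $|D\psi_{\varepsilon}^{+}|\le C_{1}$; the same argument applies to $\psi_{\varepsilon}^{-}$, since the additive constant $\delta_{\varepsilon}$ does not affect derivatives.

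For the one-sided second derivative bound (\ref{eq: bd D2 psi _e}), the key observation is that for any $x,\xi\in\mathbb{R}^{n}$ and $h>0$,
\[
\mathfrak{D}_{h,\xi}^{2}\psi_{\varepsilon}^{+}(x)=\int_{|y|\le\varepsilon}\eta_{\varepsilon}(y)\,\mathfrak{D}_{h,\xi}^{2}\psi^{+}(x-y)\,dy,
\]
because the difference operator acts only on the $x$-variable. Now fix $x$ with $d(x)>\varepsilon$ and $|\xi|\le 1$, and take $0<h<d(x)-\varepsilon$. For every $y$ with $|y|\le\varepsilon$ we have $d(x-y)\ge d(x)-\varepsilon>h$, so by part (c) of Assumption \ref{assu: =00005Cpsi +-},
\[
\mathfrak{D}_{h,\xi}^{2}\psi^{+}(x-y)\le\frac{C_{2}}{d(x-y)-h}\le\frac{C_{2}}{d(x)-\varepsilon-h}.
\]
Integrating against $\eta_{\varepsilon}$ yields $\mathfrak{D}_{h,\xi}^{2}\psi_{\varepsilon}^{+}(x)\le\frac{C_{2}}{d(x)-\varepsilon-h}$. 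Since $\psi_{\varepsilon}^{+}$ is smooth, letting $h\to 0^{+}$ gives $D_{\xi\xi}^{2}\psi_{\varepsilon}^{+}(x)\le\frac{C_{2}}{d(x)-\varepsilon}$. The same computation with the sign reversed, using the lower bound $-\mathfrak{D}_{h,\xi}^{2}\psi^{-}\le\frac{C_{2}}{d(x-y)-h}$, gives $-D_{\xi\xi}^{2}\psi_{\varepsilon}^{-}(x)\le\frac{C_{2}}{d(x)-\varepsilon}$.

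There is no real obstacle here; the only point requiring care is to verify that the shift $x\mapsto x-y$ with $|y|\le\varepsilon$ keeps us inside the range where the hypothesis on $\psi^{\pm}$ applies, which is exactly guaranteed by the estimate $d(x-y)\ge d(x)-\varepsilon$ together with the restriction $h<d(x)-\varepsilon$. This is also why the denominator degrades from $d(x)$ in (\ref{eq: bd D2 psi}) to $d(x)-\varepsilon$ in (\ref{eq: bd D2 psi _e}).
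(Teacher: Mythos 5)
Your proof is correct and follows essentially the same route as the paper: bound $D\psi_\varepsilon^\pm$ by pushing the Lipschitz estimate through the convolution, and bound $D_{\xi\xi}^2\psi_\varepsilon^\pm$ by noting that the second difference quotient commutes with the mollification, then using $d(x-y)\ge d(x)-\varepsilon$ for $|y|\le\varepsilon$ and sending $h\to 0^+$. The only cosmetic difference is in the first part, where you use first-order difference quotients while the paper differentiates under the integral sign using $|D\psi^\pm|\le C_1$ a.e.; both are equivalent.
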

\begin{proof}
To show the first part, note that $\psi^{\pm}$ are Lipschitz functions
and $|D\psi^{\pm}|\le C_{1}$ a.e. Thus we have 
\begin{align*}
|D\psi_{\varepsilon}^{\pm}(x)| & \leq\int_{|y|\leq\varepsilon}|\eta_{\varepsilon}(y)D\psi^{\pm}(x-y)|\,dy\\
 & =\int_{|y|\leq\varepsilon}\eta_{\varepsilon}(y)|D\psi^{\pm}(x-y)|\,dy\leq C_{1}\int_{|y|\leq\varepsilon}\eta_{\varepsilon}(y)\,dy\;=C_{1}.
\end{align*}
Next, suppose $d(x)>h+\varepsilon$, and $|\xi|=1$. Then due to the
Lipschitz continuity of $d$, for $|y|\le\varepsilon$ we have 
\[
d(x-y)\geq d(x)-|y|\ge d(x)-\varepsilon>h.
\]
Hence by (\ref{eq: bd D2 psi}) we get 
\begin{align*}
\pm\mathfrak{D}_{h,\xi}^{2}\psi_{\varepsilon}^{\pm}(x) & =\pm\int_{|y|\le\varepsilon}\eta_{\varepsilon}(y)\mathfrak{D}_{h,\xi}^{2}\psi^{\pm}(x-y)\,dy\\
 & \leq\int_{|y|\le\varepsilon}\eta_{\varepsilon}(y)\frac{C_{2}}{d(x-y)-h}\,dy\\
 & \leq\int_{|y|\le\varepsilon}\eta_{\varepsilon}(y)\frac{C_{2}}{d(x)-\varepsilon-h}\,dy=\frac{C_{2}}{d(x)-\varepsilon-h}.
\end{align*}
Let $h\rightarrow0^{+}$. Then for $x\in U$ with $d(x)>\varepsilon$
we get 
\[
\pm D_{\xi\xi}^{2}\psi_{\varepsilon}^{\pm}(x)\leq\frac{C_{2}}{d(x)-\varepsilon},
\]
as desired.
\end{proof}

Now consider the double obstacle problem 
\begin{equation}
\begin{cases}
F[u_{\varepsilon}]=0 & \textrm{ a.e. in }\{\psi_{\varepsilon}^{-}<u_{\varepsilon}<\psi_{\varepsilon}^{+}\},\\
F[u_{\varepsilon}]\le0 & \textrm{ a.e. on }\{u_{\varepsilon}=\psi_{\varepsilon}^{+}\},\\
F[u_{\varepsilon}]\ge0 & \textrm{ a.e. on }\{u_{\varepsilon}=\psi_{\varepsilon}^{-}\},
\end{cases}\label{eq: dbl obstcl u_e}
\end{equation}
where $u_{\varepsilon}$ belongs to $W_{\psi_{\varepsilon}^{\pm}}:=\{v\in W^{1,2}(U_{\varepsilon}):\psi_{\varepsilon}^{-}\leq v\leq\psi_{\varepsilon}^{+}\textrm{ a.e.}\}$.
\begin{lem}
\label{lem: Reg u_e}Suppose $F$ satisfies Assumptions \ref{assu: F},\ref{assu: F Lip}.
Also, suppose $\psi^{\pm}$ satisfy Assumption \ref{assu: =00005Cpsi +-}.
Then the double obstacle problem (\ref{eq: dbl obstcl u_e}) has a
solution $u_{\varepsilon}$, and for every $p<\infty$ we have 
\[
u_{\varepsilon}\in W^{2,p}(U_{\varepsilon}).
\]
\end{lem}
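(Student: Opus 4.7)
The plan is to construct $u_\varepsilon$ by the classical penalization method. Fix a smooth nondecreasing function $\beta\colon\mathbb{R}\to[0,\infty)$ with $\beta(t)=0$ for $t\le-1$, $\beta(0)=1$, $\beta'\ge0$, $\beta''\ge0$, and set $\beta_\delta(t):=\beta(t/\delta)$. Consider the penalized fully nonlinear equation
\[
F[u^\delta]+\beta_\delta(u^\delta-\psi_\varepsilon^{+})-\beta_\delta(\psi_\varepsilon^{-}-u^\delta)=0\quad\text{in }U_\varepsilon,\qquad u^\delta=\psi_\varepsilon^{+}=\psi_\varepsilon^{-}\text{ on }\partial U_\varepsilon.
\]
Since $\partial U_\varepsilon$ is smooth, $\psi_\varepsilon^{\pm}$ are smooth, and the left hand side is still convex and uniformly elliptic in $M$ with a penalization that is $C^1$ and nondecreasing in $z$, the existence of a classical (or at least $W^{2,p}$) solution $u^\delta$ follows from standard theory for fully nonlinear elliptic equations (e.g.\ \citep{trudinger1984boundary}) together with the method of continuity.

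First I would derive the $L^\infty$ bound $\psi_\varepsilon^{-}\le u^\delta\le\psi_\varepsilon^{+}+O(\delta)$ by comparison: evaluating the penalized equation at a point where $u^\delta-\psi_\varepsilon^{+}$ attains its positive maximum and using Assumption~\ref{assu: F}(a),(b) together with smoothness of $\psi_\varepsilon^{+}$ gives a one-sided bound on $\beta_\delta(u^\delta-\psi_\varepsilon^{+})$, and symmetrically from below. This also supplies a uniform bound on $\|u^\delta\|_{L^\infty}$ and, via Assumption \ref{assu: F Lip} combined with the ABP estimate applied to $u^\delta-\psi_\varepsilon^{\pm}$, a uniform $W^{1,\infty}$ bound on $u^\delta$ in a neighbourhood of $\partial U_\varepsilon$.

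The heart of the argument is a $\delta$-independent bound on the penalization terms. For this I would plug $\psi_\varepsilon^{+}$ into $F$: by Lemma~\ref{lem: D, D2 psi_e}, $D\psi_\varepsilon^{+}$ is bounded and $D^2\psi_\varepsilon^{+}\le \frac{C_2}{d(x)-\varepsilon}I$, so Assumption~\ref{assu: F}(b) and uniform ellipticity give $F[\psi_\varepsilon^{+}](x)\ge -C(1+1/(d(x)-\varepsilon))$ on $U_\varepsilon$, and similarly $F[\psi_\varepsilon^{-}](x)\le C(1+1/(d(x)-\varepsilon))$. Subtracting the penalized equation from $F[\psi_\varepsilon^{+}]$ and using convexity of $F$ in $M$, monotonicity in $z$, and Assumption~\ref{assu: F Lip}, one obtains at points where $u^\delta>\psi_\varepsilon^{+}-\delta$ a pointwise bound $\beta_\delta(u^\delta-\psi_\varepsilon^{+})\le C(1+1/(d-\varepsilon))$, and symmetrically for the lower obstacle. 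Feeding this into the penalized equation gives $\|F[u^\delta]\|_{L^p(U_\varepsilon')}\le C(\varepsilon')$ on each $U_\varepsilon'\Subset U_\varepsilon$, hence by the interior $W^{2,p}$ estimates of \citep{winter2009w2} a $\delta$-uniform bound on $\|u^\delta\|_{W^{2,p}(U_\varepsilon')}$. Up to the boundary of $U_\varepsilon$, the same estimates apply since $\partial U_\varepsilon$ is $C^\infty$ and the boundary data $\psi_\varepsilon^{+}$ is smooth.

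Finally I would pass to the limit: extract a subsequence $u^{\delta_j}\rightharpoonup u_\varepsilon$ weakly in $W^{2,p}_{\mathrm{loc}}$ and strongly in $C^1$. The $L^\infty$ bounds on $\beta_{\delta_j}(u^{\delta_j}-\psi_\varepsilon^{+})$ force $u_\varepsilon\le\psi_\varepsilon^{+}$, and symmetrically $u_\varepsilon\ge\psi_\varepsilon^{-}$. On the open set $\{\psi_\varepsilon^{-}<u_\varepsilon<\psi_\varepsilon^{+}\}$ the penalization terms vanish for large $j$, so $F[u_\varepsilon]=0$ there. On the contact set $\{u_\varepsilon=\psi_\varepsilon^{+}\}$ the first penalization is nonnegative and the second vanishes, hence $F[u_\varepsilon]\le 0$ a.e.; symmetrically $F[u_\varepsilon]\ge 0$ a.e.\ on $\{u_\varepsilon=\psi_\varepsilon^{-}\}$, yielding \eqref{eq: dbl obstcl u_e}. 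The main obstacle in the whole argument is the $\delta$-uniform control of the penalization term, which is where the one-sided Hessian bound \eqref{eq: bd D2 psi _e} and the one-sided structure of the obstacle problem must be used carefully in conjunction with the convexity of $F$ in $M$.
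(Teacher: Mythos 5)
Your overall strategy — penalize, derive a $\delta$-uniform bound on the penalization term via the maximum principle and one-sided Hessian control of the obstacles, invoke $W^{2,p}$ estimates, and pass to the limit — is exactly the paper's strategy. But there are two places where the execution goes off the rails, and a third place that is under-justified.

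First, for this lemma $\varepsilon$ is \emph{fixed}. The mollified obstacles $\psi_\varepsilon^\pm$ are $C^\infty$ on all of $\R^n$, and the inclusions (\ref{eq: inclusions}) show that $\overline{U}_\varepsilon$ is a compact subset of $U$, in fact of $\{d>\varepsilon\}$. Consequently $F[\psi^\pm_\varepsilon]$ is already bounded on $\overline{U}_\varepsilon$ by plain continuity and compactness; there is no reason to bring in the quantitative one-sided Hessian bound $D^2\psi^+_\varepsilon\le \frac{C_2}{d-\varepsilon}I$ of Lemma~\ref{lem: D, D2 psi_e}. That bound is the crucial ingredient in Theorem~\ref{thm: Reg u}, where uniformity \emph{as} $\varepsilon\to0$ is needed, but it plays no role here, and invoking it is what creates the artificial worry about $L^p$ bounds only on $U'_\varepsilon\Subset U_\varepsilon$ and the vague ``up to the boundary of $U_\varepsilon$ the same estimates apply.'' (It does not help to observe $\|F[u^\delta]\|_{L^p(U'_\varepsilon)}\le C(\varepsilon')$; for the $W^{2,p}(U_\varepsilon)$ estimate you need the bound on all of $U_\varepsilon$ up to $\partial U_\varepsilon$.)

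Second, the claimed ``pointwise bound'' $\beta_\delta(u^\delta-\psi^+_\varepsilon)(x)\le C(1+1/(d(x)-\varepsilon))$ does not follow from ``subtracting the penalized equation from $F[\psi_\varepsilon^+]$ and using convexity, monotonicity, and Assumption~\ref{assu: F Lip}''; that subtraction produces terms $\mathcal{P}^\pm(D^2u^\delta-D^2\psi^+_\varepsilon)$ and $|Du^\delta-D\psi^+_\varepsilon|$ with no pointwise control. The correct argument (the one in the paper) is a global maximum-principle argument: $\beta_\delta(u^\delta-\psi^+_\varepsilon)$, being zero on $\partial U_\varepsilon$, attains any positive maximum at an interior point $x_0$; at $x_0$ one has $Du^\delta=D\psi^+_\varepsilon$ and $D^2u^\delta\le D^2\psi^+_\varepsilon$, so ellipticity plus $F_z\ge0$ give $F[\psi^+_\varepsilon](x_0)\le F[u^\delta](x_0)=-\beta_\delta(u^\delta-\psi^+_\varepsilon)(x_0)$, and hence $\max_{U_\varepsilon}\beta_\delta(u^\delta-\psi^+_\varepsilon)\le \sup_{\overline{U}_\varepsilon}|F[\psi^+_\varepsilon]|$ — a constant independent of $\delta$ (but of course depending on $\varepsilon$). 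That is all that is needed.

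Third, the passage to the limit is sketched too lightly. On the non-coincidence set one can argue by Evans--Krylov interior estimates (so the convergence is $C^2_{\mathrm{loc}}$ and $F[u_\varepsilon]=0$), but on the coincidence set $\{u_\varepsilon=\psi^+_\varepsilon\}$ you only have weak $L^p$ convergence of $D^2u^{\delta_j}$, and ``$F[u_\varepsilon]\le0$ a.e.'' does not follow automatically; it requires either a convexity-of-$F$-in-$M$ argument against test sets, or — as the paper does — rewriting the problem as $\max\{\min\{F[u],u-\psi^-_\varepsilon\},u-\psi^+_\varepsilon\}=0$, using stability of viscosity solutions to pass to the limit, and then the pointwise second-order expansion argument (Part~II of the proof of Theorem~\ref{thm: Reg dbl obstcl}) to upgrade the viscosity solution to a strong one. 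You should either spell out the convexity argument or adopt the viscosity route; as written this step has a genuine gap.
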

\begin{proof}
Fix $\varepsilon>0$. For $\delta>0$, let $\beta_{\delta}$ be a
smooth increasing function that vanishes on $(-\infty,0]$, and equals
$\frac{1}{\delta}t$ for $t\ge\delta$. Then the equation 
\begin{equation}
\begin{cases}
F(x,u_{\varepsilon,\delta},Du_{\varepsilon,\delta},D^{2}u_{\varepsilon,\delta})-\beta_{\delta}(\psi_{\varepsilon}^{-}-u_{\varepsilon,\delta})+\beta_{\delta}(u_{\varepsilon,\delta}-\psi_{\varepsilon}^{+})=0,\\
u_{\varepsilon,\delta}=\psi_{\varepsilon}^{+}\textrm{ on }\partial U_{\varepsilon},
\end{cases}\label{eq: u-e,d}
\end{equation}
has a unique solution in $C^{2,\alpha}(\overline{U}_{\varepsilon})$
(see for example Theorem 7.4 of \citep{chen1998second}). To simplify
the notation we set 
\[
\tilde{u}=u_{\varepsilon,\delta},\qquad\qquad\beta=\beta_{\delta}.
\]
First let us show that $\tilde{u}$ is uniformly bounded independently
of $\delta$. Suppose $C^{+}$ is a positive constant larger than
the maximum of $|\psi_{\varepsilon}^{\pm}|+1$ on $\overline{U}_{\varepsilon}$.
Now if we apply the above differential operator to the constant function
whose value is $C^{+}$ we obtain 
\begin{align*}
F(x,C^{+},0,0)-\beta(\psi_{\varepsilon}^{-}-C^{+})\:+\: & \beta(C^{+}-\psi_{\varepsilon}^{+})\\
 & =F(x,C^{+},0,0)+\frac{C^{+}-\psi_{\varepsilon}^{+}}{\delta}.
\end{align*}
This last expression is positive for $\delta$ small enough, since
$F(x,C^{+},0,0)$ is bounded on $\overline{U}_{\varepsilon}$. Therefore
by the comparison principle we have $\tilde{u}\le C^{+}$. We can
similarly show that $\tilde{u}\ge-C^{+}$. Hence for small enough
$\delta$ we have 
\begin{equation}
-C^{+}\le\tilde{u}\le C^{+}.\label{eq: u_e,d bdd}
\end{equation}

Now let us show that 
\[
\|\beta(\pm(\tilde{u}-\psi_{\varepsilon}^{\pm}))\|_{L^{\infty}(U_{\varepsilon})}\le C,
\]
where $C$ is independent of $\delta$. Note that $\beta(\pm(\tilde{u}-\psi_{\varepsilon}^{\pm}))$
is zero on $\partial U_{\varepsilon}$. So assume that $\beta(\pm(\tilde{u}-\psi_{\varepsilon}^{\pm}))$
attains its positive maximum at $x_{0}\in U_{\varepsilon}$. Let us
consider $\beta(\tilde{u}-\psi_{\varepsilon}^{+})$; the other case
is similar. Since $\beta$ is increasing, $\tilde{u}-\psi_{\varepsilon}^{+}$
has a positive maximum at $x_{0}$ too. Therefore we have 
\[
D\tilde{u}(x_{0})=D\psi_{\varepsilon}^{+}(x_{0}),\qquad\qquad D^{2}\tilde{u}(x_{0})\le D^{2}\psi_{\varepsilon}^{+}(x_{0}).
\]
We also have $\tilde{u}(x_{0})>\psi_{\varepsilon}^{+}(x_{0})\ge\psi_{\varepsilon}^{-}(x_{0})$.
Hence by the ellipticity of $F$, and its monotonicity in $z$, at
$x_{0}$ we have 
\begin{align*}
F(x_{0},\psi_{\varepsilon}^{+},D\psi_{\varepsilon}^{+},D^{2}\psi_{\varepsilon}^{+}) & \le F(x_{0},\tilde{u},D\tilde{u},D^{2}\tilde{u})\\
 & =\beta(\psi_{\varepsilon}^{-}-\tilde{u})-\beta(\tilde{u}-\psi_{\varepsilon}^{+})=-\beta(\tilde{u}-\psi_{\varepsilon}^{+}).
\end{align*}
Thus $\beta(\tilde{u}-\psi_{\varepsilon}^{+})\le-F[\psi_{\varepsilon}^{+}]$
at $x_{0}$. Therefore $\beta(\tilde{u}-\psi_{\varepsilon}^{+})$
is bounded independently of $\delta$, as desired.

The bound $\beta(\pm(\tilde{u}-\psi_{\varepsilon}^{\pm}))\le C$,
and the definition of $\beta$ imply that 
\begin{equation}
\tilde{u}-\psi_{\varepsilon}^{+}\le\delta(C+1),\qquad\qquad\psi_{\varepsilon}^{-}-\tilde{u}\le\delta(C+1).\label{eq: 1 in Reg u_e}
\end{equation}
In addition, from the equation (\ref{eq: u-e,d}) we conclude that
\[
\|F[\tilde{u}]\|_{L^{\infty}(U_{\varepsilon})}\le2C.
\]
Thus by $W^{2,p}$ estimates for fully nonlinear elliptic equations
(see Theorem 4.5 of \citep{winter2009w2}) we have 
\begin{equation}
\|\tilde{u}\|_{W^{2,p}(U_{\varepsilon})}\le\bar{C}\big(\|F[\tilde{u}]\|_{L^{p}(U_{\varepsilon})}+\|\psi_{\varepsilon}^{+}\|_{C^{2}(\overline{U})}+\|\tilde{u}\|_{L^{\infty}(U)}\big)\label{eq: u_e,d in W2,p}
\end{equation}
for some constant $\bar{C}$ independent of $\delta$. We only need
to check that for a constant $\tilde{\beta}_{0}$, which is determined
by $F,p$, we have 
\[
\sup_{M}\frac{|F(x,0,0,M)-F(x_{0},0,0,M)|}{|M|+1}\le\tilde{\beta}_{0}
\]
whenever $x,x_{0}\in\overline{U}$ and $|x-x_{0}|$ is small enough.
However, this follows easily from our assumption about $|F_{x}|$.

Therefore $\tilde{u}$ is bounded in $W^{2,p}(U_{\varepsilon})$ independently
of $\delta$, due to the uniform boundedness of $\tilde{u},F[\tilde{u}]$.
Consequently for every $\tilde{\alpha}<1$, $\|\tilde{u}\|_{C^{1,\tilde{\alpha}}(\overline{U}_{\varepsilon})}$
is bounded independently of $\delta$, because $\partial U_{\varepsilon}$
is $C^{2}$. Hence there is a sequence $\delta_{j}\to0$ such that
$\tilde{u}_{j}:=u_{\varepsilon,\delta_{j}}$ is strongly convergent
in $C^{1}(\overline{U}_{\varepsilon})$, and weakly convergent in
$W^{2,p}(U_{\varepsilon})$. We denote this limit by $u_{\varepsilon}$.
Note that $u_{\varepsilon}\in W^{2,p}(U_{\varepsilon})$. Also note
that if we let $\delta_{j}\to0$ in (\ref{eq: 1 in Reg u_e}) we get
$\psi_{\varepsilon}^{-}\leq u_{\varepsilon}\leq\psi_{\varepsilon}^{+}$.

Finally, let us show that $u_{\varepsilon}$ satisfies the double
obstacle problem (\ref{eq: dbl obstcl u_e}). It suffices to show
that $u_{\varepsilon}$ satisfies 
\begin{equation}
\max\{\min\{F[u_{\varepsilon}],u_{\varepsilon}-\psi_{\varepsilon}^{-}\},u_{\varepsilon}-\psi_{\varepsilon}^{+}\}=0.\label{eq: eq instd of dbl obstcl u_e}
\end{equation}
First let us show that $u_{\varepsilon}$ is a viscosity solution
of the above equation. Suppose $\phi$ is a $C^{2}$ function and
$u_{\varepsilon}-\phi$ has a local maximum at $x_{0}\in U$. We can
assume that $u_{\varepsilon}-\phi$ has a strict local maximum at
$x_{0}$ without loss of generality, since we can approximate $\phi$
with $\phi+\epsilon|x-x_{0}|^{2}$. We must show that at $x_{0}$
we have 
\begin{equation}
\max\{\min\{F(x_{0},u_{\varepsilon},D\phi,D^{2}\phi),u_{\varepsilon}-\psi_{\varepsilon}^{-}\},u_{\varepsilon}-\psi_{\varepsilon}^{+}\}\le0.\label{eq: 2 in Reg u_e}
\end{equation}
Now we know that $\tilde{u}_{j}-\phi$ has a local maximum at a point
$x_{j}$ where $x_{j}\to x_{0}$; because $\tilde{u}_{j}$ uniformly
converges to $u_{\varepsilon}$. Hence we have 
\[
D\tilde{u}_{j}(x_{j})=D\phi(x_{j}),\qquad\qquad D^{2}\tilde{u}_{j}(x_{j})\le D^{2}\phi(x_{j}).
\]
We also know that $\psi_{\varepsilon}^{-}\leq u_{\varepsilon}\leq\psi_{\varepsilon}^{+}$.
If $\psi_{\varepsilon}^{-}(x_{0})=u_{\varepsilon}(x_{0})$ then (\ref{eq: 2 in Reg u_e})
holds trivially. So suppose $\psi_{\varepsilon}^{-}(x_{0})<u_{\varepsilon}(x_{0})$.
Then for large $j$ we have $\psi_{\varepsilon}^{-}(x_{j})<\tilde{u}_{j}(x_{j})$.
Hence by ellipticity of $F$ and equation (\ref{eq: u-e,d}), at $x_{j}$
we have 
\begin{align*}
F(x_{j},\tilde{u}_{j},D\phi,D^{2}\phi) & \le F(x_{j},\tilde{u}_{j},D\tilde{u}_{j},D^{2}\tilde{u}_{j})\\
 & =\beta_{\delta_{j}}(\psi_{\varepsilon}^{-}-\tilde{u}_{j})-\beta_{\delta_{j}}(\tilde{u}_{j}-\psi_{\varepsilon}^{+})=-\beta_{\delta_{j}}(\tilde{u}_{j}-\psi_{\varepsilon}^{+})\le0.
\end{align*}
Thus by letting $j\to\infty$ we see that (\ref{eq: 2 in Reg u_e})
holds in this case too. Similarly, we can show that when $\psi$ is
a $C^{2}$ function and $u_{\varepsilon}-\psi$ has a local minimum
at $x_{0}$, we have 
\[
\max\{\min\{F(x_{0},u_{\varepsilon},D\psi,D^{2}\psi),u_{\varepsilon}-\psi_{\varepsilon}^{-}\},u_{\varepsilon}-\psi_{\varepsilon}^{+}\}\ge0.
\]
Therefore $u_{\varepsilon}$ is a viscosity solution of equation (\ref{eq: eq instd of dbl obstcl u_e}).
Hence, as we have shown in Part II of the proof of Theorem \ref{thm: Reg dbl obstcl},
$u_{\varepsilon}$ is also a strong solution of (\ref{eq: eq instd of dbl obstcl u_e});
so it satisfies the double obstacle problem (\ref{eq: dbl obstcl u_e})
as desired.
\end{proof}
\begin{thm}
\label{thm: Reg u}Suppose $F$ satisfies Assumptions \ref{assu: F},\ref{assu: F Lip}.
Also, suppose $\psi^{\pm}$ satisfy Assumption \ref{assu: =00005Cpsi +-}.
Then the double obstacle problem (\ref{eq: dbl obstcl 2}) has a solution
$u$, and we have 
\[
u\in W_{\mathrm{loc}}^{2,\infty}(U).
\]
Furthermore, if $\partial U$ is $C^{2,\alpha}$ for some $\alpha>0$,
and $\psi^{\pm}$ are $C^{2,\alpha}$ on a neighborhood of $\partial U$
in $\overline{U}$, we have 
\[
u\in W^{2,\infty}(U).
\]
\end{thm}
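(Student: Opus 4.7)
\textbf{Proof proposal for Theorem \ref{thm: Reg u}.} The plan is to pass to the limit $\varepsilon \to 0$ in the regularized solutions $u_{\varepsilon}$ produced by Lemma \ref{lem: Reg u_e}, and to import, almost verbatim, the argument used in the proof of Theorem \ref{thm: Reg dbl obstcl}. The novel point compared to that theorem is that here we have no smooth ridge theory for the obstacles; all we know is the one-sided Hessian bound (\ref{eq: bd D2 psi _e}) on $\psi_{\varepsilon}^{\pm}$ supplied by Lemma \ref{lem: D, D2 psi_e}. This one-sided bound is precisely what replaces the monotonicity property of $D^{2}\rho_{k}$ used in the proof of Theorem \ref{thm: Reg dbl obstcl}.

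\emph{Part I (uniform bound on $F[u_{\varepsilon}]$, locally).} Fix a compact $K \subset U$ and $\varepsilon_{0} < \tfrac12 \mathrm{dist}(K,\partial U)$. By (\ref{eq: U=00003DU U_e}) and (\ref{eq: U_e subst U_e'}) we have $K \subset U_{\varepsilon}$ for $\varepsilon < \varepsilon_{0}$, and Lemma \ref{lem: D, D2 psi_e} gives $\pm D^{2}_{\xi\xi}\psi_{\varepsilon}^{\pm} \leq M_{K}$ uniformly on $K$ for $|\xi|=1$. On the contact set $\{u_{\varepsilon}=\psi_{\varepsilon}^{+}\}$ we have $D^{2}u_{\varepsilon}=D^{2}\psi_{\varepsilon}^{+} \leq M_{K} I$ a.e., so by uniform ellipticity $F[u_{\varepsilon}] \geq F(x,u_{\varepsilon},Du_{\varepsilon},M_{K}I)$, which is bounded since $u_{\varepsilon},Du_{\varepsilon}$ are uniformly bounded (the former by $|\psi_{\varepsilon}^{\pm}|$, the latter by a standard interior Lipschitz estimate / the fact that $u_{\varepsilon}$ is squeezed between $\psi_{\varepsilon}^{\pm}$ which agree on $\partial U_{\varepsilon}$). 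Together with $F[u_{\varepsilon}]\leq 0$ there, this yields $|F[u_{\varepsilon}]|\leq C_{K}$ a.e. on the upper contact set; an analogous argument works on $\{u_{\varepsilon}=\psi_{\varepsilon}^{-}\}$, and on the non-coincidence set $F[u_{\varepsilon}]=0$. Thus $\|F[u_{\varepsilon}]\|_{L^{\infty}(K)}\leq C_{K}$ independently of $\varepsilon$.

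\emph{Part II (extracting the limit and verifying it solves (\ref{eq: dbl obstcl 2})).} Interior $W^{2,p}$ estimates for fully nonlinear equations applied to $u_{\varepsilon}$, viewed as a strong solution of $F[u_{\varepsilon}]=f_{\varepsilon}$ with $\|f_{\varepsilon}\|_{L^{\infty}(K')}$ controlled by Part~I, give a uniform bound on $\|u_{\varepsilon}\|_{W^{2,p}(K)}$ for every $K \subset\subset K' \subset\subset U$ and every $p<\infty$. A diagonal extraction then produces $u_{\varepsilon_{j}} \to u$ strongly in $C^{1,\tilde{\alpha}}_{\mathrm{loc}}(U)$ and weakly in $W^{2,p}_{\mathrm{loc}}(U)$. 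Since $u_{\varepsilon}$ is a strong, and hence viscosity, solution of
\[
\max\{\min\{F[u_{\varepsilon}],\,u_{\varepsilon}-\psi_{\varepsilon}^{-}\},\,u_{\varepsilon}-\psi_{\varepsilon}^{+}\}=0,
\]
the stability of viscosity solutions, followed by the promotion from viscosity to strong solution carried out in Part~II of the proof of Theorem \ref{thm: Reg dbl obstcl}, shows that $u$ solves the double obstacle problem (\ref{eq: dbl obstcl 2}). The inclusion $\psi^{-}\leq u\leq \psi^{+}$ follows from the uniform convergence $\psi_{\varepsilon}^{\pm}\to\psi^{\pm}$.

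\emph{Part III ($W^{2,\infty}_{\mathrm{loc}}$, and $W^{2,\infty}$ under the extra hypothesis).} On the contact set $\{u_{\varepsilon}=\psi_{\varepsilon}^{+}\}\cap K$ we have the one-sided bound $D^{2}u_{\varepsilon}\leq M_{K}I$ from Part~I; combining this with the uniform bound on $F[u_{\varepsilon}]$ and uniform ellipticity (exactly as in Part~III of the proof of Theorem \ref{thm: Reg dbl obstcl}, via $n M_{K}-\Delta u_{\varepsilon}\geq 0$ and $F[\cdot,\cdot,\cdot,M_{K}I]$ bounded) yields a two-sided bound $|D^{2}u_{\varepsilon}|\leq \tilde{C}_{K}$ on both contact sets intersected with $K$. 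On the non-coincidence set $u_{\varepsilon}$ solves $F[u_{\varepsilon}]=0$ with Hölder-continuous coefficients (in the $C^{1,\tilde{\alpha}}$ sense). We may then apply the interior $W^{2,\infty}$ estimate of Indrei–Minne \citep{Indrei-Minne} in balls $B_{r}(x_{0})\subset U$ after rescaling, exactly as in Part~III of the proof of Theorem \ref{thm: Reg dbl obstcl}, to obtain $|D^{2}u_{\varepsilon}|\leq \bar{C}$ a.e. in $B_{r/2}(x_{0})$; weak-$\star$ passage to the limit gives $u\in W^{2,\infty}_{\mathrm{loc}}(U)$. Under the additional hypotheses that $\partial U$ is $C^{2,\alpha}$ and $\psi^{\pm}$ are $C^{2,\alpha}$ on a neighborhood of $\partial U$ in $\overline{U}$, the one-sided Hessian bounds on $\psi_{\varepsilon}^{\pm}$ become genuine two-sided bounds that remain uniform up to $\partial U$, so Parts~I and the preceding estimate extend uniformly to the boundary. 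Flattening $\partial U$ and invoking the boundary $W^{2,\infty}$ estimate of \citep{indrei2016nontransversal} as in Part~IV of the proof of Theorem \ref{thm: Reg dbl obstcl} then yields $u\in W^{2,\infty}(U)$.

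The main obstacle is Part~I: the one-sided bound (\ref{eq: bd D2 psi _e}) degenerates like $1/(d-\varepsilon)$ as one approaches $\partial U_{\varepsilon}$, so the bound on $F[u_{\varepsilon}]$ is only local in the general case. This is precisely why global $W^{2,\infty}$ requires the $C^{2,\alpha}$ regularity of $\psi^{\pm}$ near the boundary, which prevents the degeneration and makes all estimates uniform up to $\partial U$.
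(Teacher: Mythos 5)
Your Parts~I and II, and the local $W^{2,\infty}$ portion of Part~III, follow essentially the same route as the paper: use the degenerate bound $|F[u_{\varepsilon}]|\le C+C/(d-\varepsilon)$ coming from Lemma~\ref{lem: D, D2 psi_e}, get locally uniform $W^{2,p}$ bounds and a diagonal extraction, verify that the limit is a (viscosity, hence strong) solution exactly as in Part~II of the proof of Theorem~\ref{thm: Reg dbl obstcl}, then combine the one-sided Hessian bound on the contact sets with the trace trick and the Indrei--Minne interior estimate. The phrasing with compact $K\subset U$ replaces the paper's exhaustion by the nested $U_{\varepsilon_k}$, but the argument is the same.

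The global $W^{2,\infty}$ part of your Part~III has a real gap. You assert that under the extra hypotheses the one-sided Hessian bounds on $\psi_{\varepsilon}^{\pm}$ "become genuine two-sided bounds that remain uniform up to $\partial U$", and that the estimates then extend uniformly to the boundary. Two problems. First, even granting uniform two-sided Hessian bounds on the mollified obstacles near $\partial U$, the approximating solutions $u_{\varepsilon}$ live on the shrinking domains $U_{\varepsilon}\subsetneq U$ with Dirichlet data $\psi_{\varepsilon}^{+}$ on the moving hypersurface $\partial U_{\varepsilon}$; flattening $\partial U$ and invoking the boundary estimate of \citep{indrei2016nontransversal} does not make sense for $u_{\varepsilon}$ as you have it, since $u_{\varepsilon}$ is not defined near $\partial U$ at all. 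Second, the mollifications $\psi_{\varepsilon}^{\pm}=\eta_{\varepsilon}\ast\psi^{\pm}+\text{const}$ are built from the restriction of $\psi^{\pm}$ to all of $\mathbb{R}^{n}$, while the $C^{2,\alpha}$ hypothesis is only on a neighborhood of $\partial U$ in $\overline U$; you would at minimum have to check that the resulting Hessian bound does not pick up contributions from where you have no control, and you would also have to reconcile the blow-up $C_{2}/(d-\varepsilon)$ with the two-sided bound in the transition region. The paper avoids both issues by replacing $\psi_{\varepsilon}^{\pm}$ with
\[
\hat{\psi}_{\varepsilon}^{\pm}:=\zeta\,\psi^{\pm}+(1-\zeta)\,(\eta_{\varepsilon}\ast\psi^{\pm}),
\]
where $\zeta$ is a cutoff equal to $1$ near $\partial U$; these new obstacles are globally $C^{2,\alpha}$, agree on $\partial U$, satisfy $\hat{\psi}_{\varepsilon}^{-}<\hat{\psi}_{\varepsilon}^{+}$ throughout $U$ (which requires the small lower-bound computation the paper carries out), have a genuinely uniform two-sided Hessian bound, and — crucially — yield $U_{\varepsilon}=U$ for every $\varepsilon$. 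Only then does the whole machinery of Parts~II--IV of Theorem~\ref{thm: Reg dbl obstcl} carry over up to the boundary. You need this (or some equivalent device) to close the global statement.
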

\begin{proof}
Let $u_{\varepsilon}$ be as in Lemma \ref{lem: Reg u_e}. Let us
first show that 
\begin{equation}
|F[u_{\varepsilon}]|\le C+\frac{C}{d-\varepsilon},\qquad\qquad\textrm{a.e. on }U_{\varepsilon},\label{eq: F=00005Bu_e=00005D bdd}
\end{equation}
where $d$ is the Euclidean distance to $\partial U$, and $C$ is
independent of $\varepsilon$. (Note that by (\ref{eq: inclusions})
we have $U_{\varepsilon}\subset\{d>\varepsilon\}$.) To see this,
note that in the open set $\{\psi_{\varepsilon}^{-}<u_{\varepsilon}<\psi_{\varepsilon}^{+}\}$
we have $F[u_{\varepsilon}]=0$; so the desired bound holds trivially.
Next consider the set $\{u_{\varepsilon}=\psi_{\varepsilon}^{+}\}$.
By (\ref{eq: dbl obstcl u_e}) we have $F[u_{\varepsilon}]\le0$ a.e.
on $\{u_{\varepsilon}=\psi_{\varepsilon}^{+}\}$. On the other hand,
since both $u_{\varepsilon},\psi_{\varepsilon}^{+}$ are twice weakly
differentiable, for a.e. $x\in\{u_{\varepsilon}=\psi_{\varepsilon}^{+}\}$
we have $Du_{\varepsilon}(x)=D\psi_{\varepsilon}^{+}(x)$ and $D^{2}u_{\varepsilon}(x)=D^{2}\psi_{\varepsilon}^{+}(x)$.
Hence by the ellipticity of $F$ and the bound (\ref{eq: bd D2 psi _e})
for $D^{2}\psi_{\varepsilon}^{+}$ we have 
\begin{align*}
F(x,u_{\varepsilon}(x),Du_{\varepsilon}(x),D^{2}u_{\varepsilon}(x)) & =F(x,\psi_{\varepsilon}^{+}(x),D\psi_{\varepsilon}^{+}(x),D^{2}\psi_{\varepsilon}^{+}(x))\\
 & \ge F(x,\psi_{\varepsilon}^{+}(x),D\psi_{\varepsilon}^{+}(x),\tfrac{C_{2}}{d(x)-\varepsilon}I)\\
 & \ge F(x,\psi_{\varepsilon}^{+}(x),D\psi_{\varepsilon}^{+}(x),0)-\tfrac{n\Lambda C_{2}}{d(x)-\varepsilon}\ge-C-\tfrac{C}{d(x)-\varepsilon}.
\end{align*}
Note that $D\psi_{\varepsilon}^{+}$ is uniformly bounded by (\ref{eq: bd D psi _e}).
We can similarly show that $F[u_{\varepsilon}]$ has the desired bound
on $\{u_{\varepsilon}=\psi_{\varepsilon}^{-}\}$.

Now, we choose a decreasing sequence $\varepsilon_{k}\to0$ such that
$\overline{U}_{\varepsilon_{k}}\subset U_{\varepsilon_{k+1}}$ (this
is possible by (\ref{eq: U_e subst U_e'})). For convenience we use
$U_{k},u_{k},\psi_{k}^{\pm}$ instead of $U_{\varepsilon_{k}},u_{\varepsilon_{k}},\psi_{\varepsilon_{k}}^{\pm}$.
Consider the sequence $u_{k}|_{U_{2}}$ for $k>2$. By (\ref{eq: F=00005Bu_e=00005D bdd}),
(\ref{eq: inclusions}) we have 
\[
\|F[u_{k}]\|_{L^{\infty}(U_{2})}\le C
\]
for some $C$ independent of $k$. Thus by interior $W^{2,p}$ estimates
for fully nonlinear elliptic equations (see Theorem 4.2 of \citep{winter2009w2},%
{} and the proof of Lemma \ref{lem: Reg u_e}) we have 
\[
\|u_{k}\|_{W^{2,p}(U_{1})}\le\bar{C}\big(\|F[u_{k}]\|_{L^{p}(U_{2})}+\|u_{k}\|_{L^{\infty}(U_{2})}\big)
\]
for some constant $\bar{C}$ independent of $k$. Therefore $u_{k}$
is bounded in $W^{2,p}(U_{1})$. Consequently for every $\tilde{\alpha}<1$,
$\|u_{k}\|_{C^{1,\tilde{\alpha}}(\overline{U}_{1})}$ is bounded independently
of $k$, because $\partial U_{1}$ is $C^{2}$.

Therefore there is a subsequence of $u_{k}$'s, which we denote by
$u_{k_{1}}$, that weakly converges in $W^{2,p}(U_{1})$ to a function
$\tilde{u}_{1}$. In addition, we can assume that $u_{k_{1}},Du_{k_{1}}$
uniformly converge to $\tilde{u}_{1},D\tilde{u}_{1}$. Now we can
repeat this process with $u_{k_{1}}|_{U_{3}}$ and get a function
$\tilde{u}_{2}$ in $W^{2,p}(U_{2})$, which agrees with $\tilde{u}_{1}$
on $U_{1}$. Continuing this way with subsequences $u_{k_{l}}$ for
each positive integer $l$, we can finally construct a $C^{1}$ function
$u$ in $W_{\textrm{loc}}^{2,p}(U)$ (note that $U=\bigcup U_{k}$
by (\ref{eq: U=00003DU U_e})). It is obvious that $\psi^{-}\le u\le\psi^{+}$,
since $\psi_{k}^{-}\le u_{k}\le\psi_{k}^{+}$ for every $k$.

Let us show that $u$ satisfies the double obstacle problem (\ref{eq: dbl obstcl 2}).
It suffices to show that $u$ satisfies 
\[
\max\{\min\{F[u],u-\psi^{-}\},u-\psi^{+}\}=0.
\]
Similarly to the proof of Lemma \ref{lem: Reg u_e},%
{} we can show that $u$ is a viscosity solution of the above equation.
Then, as we have shown in Part II of the proof of Theorem \ref{thm: Reg dbl obstcl},
it follows that $u$ is also a strong solution of the above equation;
so it satisfies the double obstacle problem (\ref{eq: dbl obstcl 2})
as desired.

Next, similarly to Part III of the proof of Theorem \ref{thm: Reg dbl obstcl},
by utilizing the bounds (\ref{eq: bd D2 psi _e}) for $D^{2}\psi_{k}^{\pm}$
and (\ref{eq: F=00005Bu_e=00005D bdd}) for $F[u_{k}]$, we can show
that $D^{2}u_{k}$ is bounded on $\{u_{k}=\psi_{k}^{\pm}\}$ independently
of $k$. Then we can apply the result of \citep{Indrei-Minne} to
deduce that $D^{2}u_{k}$ is locally bounded independently of $k$,
and conclude that $u$ belongs to $W_{\mathrm{loc}}^{2,\infty}(U)$.

Finally, suppose that $\partial U$ is $C^{2,\alpha}$, and $\psi^{\pm}$
are $C^{2,\alpha}$ on $\overline{U}\cap\{d<3r\}$. Let $0\le\zeta\le1$
be a $C^{\infty}$ function which equals $1$ on $U\cap\{d<r\}$ and
equals $0$ on $U\cap\{d>2r\}$. Let $\eta_{\varepsilon}$ be the
standard mollifier, and set 
\[
\hat{\psi}_{\varepsilon}^{\pm}:=\zeta\psi^{\pm}+(1-\zeta)(\eta_{\varepsilon}*\psi^{\pm})
\]
for $\varepsilon$ small enough. Note that $\hat{\psi}_{\varepsilon}^{\pm}$
are $C^{2,\alpha}$ on $\overline{U}$, and agree on $\partial U$.
Also, $\hat{\psi}_{\varepsilon}^{\pm}$ uniformly converges to $\psi^{\pm}$
as $\varepsilon\to0$. It is obvious that $\hat{\psi}_{\varepsilon}^{-}=\psi^{-}<\psi^{+}=\hat{\psi}_{\varepsilon}^{+}$
on $U\cap\{d<r\}$. Now on $U\cap\{d\ge\frac{1}{2}r\}$ we have $\psi^{+}-\psi^{-}\ge c>0$.
Hence if $d(x)\ge r$ we get 
\begin{align*}
\eta_{\varepsilon}*\psi^{+}(x)-\eta_{\varepsilon}*\psi^{-}(x)=\int_{|y|\leq\varepsilon}\eta_{\varepsilon}(y) & [\psi^{+}(x-y)-\psi^{-}(x-y)]\,dy\\
 & \qquad\qquad\ge c\int_{|y|\leq\varepsilon}\eta_{\varepsilon}(y)\,dy=c.
\end{align*}
Therefore we have 
\[
\hat{\psi}_{\varepsilon}^{+}-\hat{\psi}_{\varepsilon}^{-}:=\zeta(\psi^{+}-\psi^{-})+(1-\zeta)(\psi_{\varepsilon}^{+}-\psi_{\varepsilon}^{-})\ge c(\zeta+1-\zeta)=c>0.
\]
Thus we have $\hat{\psi}_{\varepsilon}^{-}<\hat{\psi}_{\varepsilon}^{+}$
on $U$. In addition, note that around $\partial U$, $D^{2}\hat{\psi}_{\varepsilon}^{\pm}=D^{2}\psi^{\pm}$
are bounded. Thus similarly to Lemma \ref{lem: D, D2 psi_e} we can
show that for any unit vector $\xi$ and every $x\in U$ we have 
\begin{equation}
|D\hat{\psi}_{\varepsilon}^{\pm}|\le C,\qquad\qquad\pm D_{\xi\xi}^{2}\hat{\psi}_{\varepsilon}^{\pm}(x)\le C\label{eq: bd D2 psi_til}
\end{equation}
for some $C$ independent of $\varepsilon$.

Now we can repeat the construction of $u_{\epsilon}$ with $\hat{\psi}_{\varepsilon}^{\pm}$
instead of $\psi_{\varepsilon}^{\pm}$. Note that in this case we
have $U_{\varepsilon}=U$ for every $\varepsilon$. Also, if we use
the bound (\ref{eq: bd D2 psi_til}) instead of (\ref{eq: bd D2 psi _e})
in the first paragraph of the proof of this theorem, we can conclude
that 
\[
|F[u_{\varepsilon}]|\le\tilde{C},\qquad\qquad\textrm{a.e. on }U
\]
for some $\tilde{C}$ independent of $\varepsilon$. Hence we can
deduce that $\|u_{\varepsilon}\|_{W^{2,p}(U)}$ is uniformly bounded.
Thus a subsequence of $u_{\varepsilon}$ converges to a function $u$.
Then we can repeat Parts II-IV of the proof of Theorem \ref{thm: Reg dbl obstcl}
to show that $u$ satisfies the double obstacle problem (\ref{eq: dbl obstcl 2}),
and we have $u\in W^{2,\infty}(U)$ as desired.
\end{proof}
\begin{acknowledgement*}
This research was in part supported by Iran National Science Foundation
Grant No 97012372.
\end{acknowledgement*}

\bibliographystyle{plainnat}
\bibliography{/Volumes/A/Dropbox/Bibliography-Jan-2021}

\end{document}